\documentclass{patmorin}
\usepackage{pat}
\usepackage[T1]{fontenc}
\usepackage[utf8]{inputenc}
\usepackage{mathtools}
\usepackage{thmtools}
\usepackage{thm-restate}
\usepackage{siunitx}
\usepackage{comment}
\usepackage[normalem]{ulem}
\usepackage[svgnames, dvipsnames]{xcolor}

\usepackage{tikz}
\usetikzlibrary{hobby, decorations.markings, calc}

\definecolor{turfgreen}{HTML}{266F57}
\definecolor{goldenorange}{HTML}{E59500}
\definecolor{steelblue}{rgb}{0.27, 0.51, 0.71}

\newcommand{\Ucol}{turfgreen}
\newcommand{\Mcol}{brightmaroon}
\newcommand{\Ecol}{goldenorange}
\newcommand{\projcol}{brightmaroon} % \newcommand{\projcol}{steelblue}

\makeatletter
\newcommand*{\greek}[1]{%
  \expandafter\@greek\csname c@#1\endcsname
}
\newcommand*{\@greek}[1]{%
  $\ifcase#1\or\alpha\or\beta\or\gamma\or\delta\or\varepsilon
    \or\zeta\or\eta\or\theta\or\iota\or\kappa\or\lambda
    \or\mu\or\nu\or\xi\or o\or\pi\or\varrho\or\sigma
    \or\tau\or\upsilon\or\phi\or\chi\or\psi\or\omega
    \else\@ctrerr\fi$
}
\makeatother

\usepackage{enumerate}
\usepackage[shortlabels,inline]{enumitem}
\renewenvironment{enumerate}{\begin{enumorig}[label=\textup{(\roman*)}, noitemsep, 
topsep=2pt plus 2pt, labelindent=.2em, leftmargin=*, widest=iii]}{\end{enumorig}}

\newenvironment{enumeratealph}{\begin{enumorig}[label=\textup{(\alph*)}, noitemsep, 
topsep=2pt plus 2pt, labelindent=.2em, leftmargin=*, widest=10]}{\end{enumorig}}

\renewenvironment{itemize}{\begin{itemorig}[label=\tiny\textbullet, noitemsep, 
topsep=2pt plus 2pt, labelindent=.2em, leftmargin=*, widest=ii]}{\end{itemorig}}

\iftrue    % change to \iffalse to skip comments 
\newcommand{\vida}[1]{{\color{Green}Vida: #1}}
\newcommand{\pat}[1]{\textcolor{Blue}{Pat: #1}}
\newcommand{\gwen}[1]{\textcolor{Purple}{Gwen: #1}}
\newcommand{\piotr}[1]{\textcolor{red}{Piotr: #1}}
\newcommand{\raj}[1]{\textcolor{orange}{Raj: #1}}
\else
\newcommand{\vida}[1]{}
\newcommand{\pat}[1]{}
\newcommand{\gwen}[1]{}
\newcommand{\piotr}[1]{}
\newcommand{\raj}[1]{}
\fi

\crefname{p}{}{}
\creflabelformat{p}{#2(#1)#3}

\newenvironment{clmproof}{\begin{proof}[Proof of Claim]}{\end{proof}}

\usepackage[longnamesfirst,numbers,sort&compress]{natbib}

\DeclareMathOperator{\len}{len}
\DeclareMathOperator{\tw}{tw}

\DeclareMathOperator{\pad}{pad}
\DeclareMathOperator{\Aux}{Aux}

\DeclareMathOperator{\proj}{proj}
\DeclareMathOperator{\spn}{span}

\DeclareMathOperator{\dist}{dist}

\DeclareMathOperator{\polylog}{polylog}

\newcommand{\NN}{\mathbb{N}}

\newcommand{\Oh}{\mathcal{O}}

\DeclarePairedDelimiter\set{\{}{\}}

\let\angle\relax
\DeclarePairedDelimiter\angle{{\!}\langle}{\rangle}

\renewcommand{\ge}{\geqslant}
\renewcommand{\le}{\leqslant}
\renewcommand{\geq}{\geqslant}
\renewcommand{\leq}{\leqslant}

\renewcommand{\emptyset}{\varnothing}

\title{\MakeUppercase{Far-apart {E}rdős--{P}ósa property of long cycles}}
\author{%
 Maria Chudnovsky\,\thanks{Princeton University, Princeton, NJ 08544, USA (\texttt{mchudnov@math.princeton.edu}).
 Supported by NSF Grants DMS-2348219 and CCF-2505100, AFOSR grant FA9550-25-1-0275, and a Guggenheim Fellowship.}
 \qquad
 Vida Dujmovi{\'c}\,\thanks{School of Computer Science and Electrical Engineering, University of Ottawa, Ottawa, Canada (\texttt{vida.dujmovic@uottawa.ca}). Research supported by NSERC and a University of Ottawa Research Chair.}
 \qquad
 Gwena\"el Joret\thanks{D\'epartement d'Informatique, Universit\'e libre de Bruxelles, Belgium ({\tt gwenael.joret@ulb.be}). G.\ Joret is supported by the Belgian National Fund for Scientific Research (FNRS).}
 \qquad
Raj Kaul\thanks{School of Mathematics, Monash University, Australia ({\tt raj.kaul@monash.edu}). Research supported by an Australian Government Research Training Program Scholarship.}
 \qquad
Piotr Micek\thanks{Department of Theoretical Computer Science, Jagiellonian University, Kraków, Poland (\texttt{piotr.micek@uj.edu.pl}). Research supported by the National Science Center of Poland under grant UMO-2023/05/Y/ST6/00079 within the WEAVE-UNISONO program.}
 \qquad
 Pat Morin\thanks{School of Computer Science, Carleton University, Ottawa, Canada (\texttt{morin@scs.carleton.ca}). Research supported by NSERC.}
  \qquad
 Alex Scott\,\thanks{Mathematical Institute, University of Oxford, Oxford, UK (\texttt{alexander.scott@maths.ox.ac.uk}). Research supported by EPSRC grant EP/X013642/1}
 \qquad%
}
\date{}

\begin{document}

\maketitle

\begin{abstract}
We prove that there exist functions $f:\mathbb N^2\to\mathbb N$ and $g:\mathbb N\to\mathbb N$ such that for all positive integers $k$, $d$, and $\ell\ge3$, every graph $G$ either contains $k$ cycles of length at least $\ell$ that are pairwise at distance greater than $d$, or admits a subset of vertices $X$ with $|X|\le f(k,\ell)$ such that $G-B_G(X,g(d))$ contains no cycle of length at least $\ell$, where $B_G(X,r)$ denotes the ball of radius $r$ around $X$.
This generalizes a theorem of Dujmović, Joret, Micek, and Morin (2024), which established the $\ell=3$ case. 
Moreover, we prove that the theorem holds with $f(k,\ell)\in\Oh(\ell k\log k)$ and $g(d)\in\Oh(d)$.
The linear bound on $g$ is best possible, while the bound on $f$ is optimal as a function of $k$ for every fixed $\ell$. In particular, for $\ell=3$ our result improves the previous bound of $\Oh(k^{18}\polylog k)$ by Dujmović et al.
\end{abstract}

\section{Introduction}
\label{sec:intro}
In 1965, Erdős and Pósa~\cite{EP1965} proved that there exists a function $f(k)$ such that for every positive integer $k$ and every graph $G$, either $G$ contains $k$ vertex-disjoint cycles, or there is a subset $X$ of vertices of $G$ with $|X|\le f(k)$ such that $G-X$ has no cycles. This celebrated result has become one of the cornerstones of graph theory and has inspired a vast literature on the \emph{Erdős--Pósa property} for a wide range of combinatorial objects.

Many extensions of the Erdős--Pósa theorem impose additional constraints on the cycles under consideration while retaining the same packing-versus-covering paradigm.
Well-known constraints and generalizations include: long cycles \cite{Birmele2007,FioriniLong2014,MOUSSET201721,BruhnLong2018},
where each cycle has length at least $\ell$;  $S$-cycles \cite{KAKIMURA2011378,PONTECORVI20121134,BruhnLong2018,JoosParity2017,KAKIMURA201297}, where the cycles must intersect a prescribed set $S$; modularity constraints \cite{Dejter1988Unboundedness,ThomassenPresence1988,Wollan2011Modularity,KAKIMURA201297,TightEPfunctionForPlanarMinors,rautenbach2001erdos,Thomassen2001EPHighlyConnected,KAWARABAYASHI2006296,JoosParity2017,gollin2025unified},
where the lengths of cycles have remainder $r$ modulo $m$. Another key ingredient in Erdős--Pósa type theorems is the mode of packing, for example, minors \cite{ROBERTSON198692,TightEPfunctionForPlanarMinors,EPWheelMinors,Kloks2002,BIENSTOCK1992163,BonamyJonesSubcubic,FominStrengthening2011,ExForestMinors2013,dujmovic2025tight}, edge-disjointness \cite{DimitrisAn2018,ArchontiaPacking2016,BruhnLong2019,van_Batenburg_2020,BruhnEdge2021}, half-integral packing \cite{MangoesBlueberries,LiuPacking2022,Huynh2019Unified,GollinUnified2024,kakimura2013half}.

\subsection{Far-Apart Erdős--Pósa Property of Cycles}
\label{subsec:far-apart-property}

The following theorem of \citet{ErdosPosaCyclesFarApart}, which we call the ``far-apart Erd\H{o}s--P\'{o}sa property of cycles'', may be viewed as an extension of the Erd\H{o}s--P\'{o}sa theorem where the cycles being "pairwise far apart" is the mode of packing. Here and throughout the paper, $B_G(X,r)$ denotes the set of vertices at distance at most $r$ from $X$ in $G$.

\begin{thm}[\cite{ErdosPosaCyclesFarApart}]
\label{thm:ErdosPosaCyclesFarApart}
There exist functions $f:\N \to \N$ and $g:\N \to \N$, 
%with $g(d)\in O(d)$, 
such that for all positive integers $k$ and $d$, and for every graph $G$, either $G$ contains $k$ cycles that are pairwise at distance greater than $d$ in $G$, or there exists a subset $X$ of vertices of $G$ with $|X|\leq f(k)$ such that $G-B_G(X,g(d))$ has no cycles.
\end{thm}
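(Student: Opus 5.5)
The plan is to argue by induction on $k$, after strengthening the statement so that the induction can run inside a subgraph while distances are still measured in the original graph. The strengthened statement reads as follows. For every graph $G$ and every induced subgraph $H$ of $G$, either $H$ contains $k$ cycles pairwise at distance greater than $d$ \emph{in $G$}, or there is $X\subseteq V(G)$ with $|X|\le f(k)$ such that $H-B_G(X,g(d))$ is acyclic. Taking $H=G$ gives \cref{thm:ErdosPosaCyclesFarApart}. For $k=1$ we may take $X=\emptyset$.

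For the induction step, first suppose that $H$ has a cycle $C$ of length at most $L=L(d)$, with $L$ linear in $d$. Pick any $v\in V(C)$ and a radius $R=g(d)$, say $R=L+2d+1$. Put $H'=H-B_G(v,R)$. Every cycle of $H'$ is at $G$-distance greater than $R-L\ge d$ from $C$. Hence, if $H'$ contained $k-1$ cycles pairwise far apart in $G$, adding $C$ would give $k$ such cycles in $H$. Otherwise, induction yields $X'$ with $|X'|\le f(k-1)$ and $H'-B_G(X',g(d))$ acyclic, and we set $X=X'\cup\{v\}$. This gives $f(k)=f(k-1)+1$ in this case. The point of measuring distances in $G$ rather than in $H'$ is exactly to make this gluing valid.

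The main obstacle is the remaining case, where every cycle of $H$ is long compared with $d$, so no single ball of radius $O(d)$ can absorb a cycle. My first attempt here would be a coarsening argument.
\begin{enumerate}
\item Take a BFS layering of $G$ from a root.
\item Cut each layer and its neighbouring layers into clusters of weak diameter $O(d)$, in the style of a Klein--Plotkin--Rao partition, so that every ball of radius $d$ meets only $O(1)$ clusters.
\item Contract the clusters to obtain a quotient graph $Q$.
\end{enumerate}
A long cycle of $H$ projects to a closed walk in $Q$ that is not nullhomotopic in the obvious sense. Conversely, a cycle of $Q$ lifts to a cycle of $H$ whenever the clusters are connected, and the girth assumption forces the projection to contain a genuine cycle of $Q$.

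We then apply the classical Erd\H{o}s--P\'osa theorem in $Q$, or rather a version for cycles pairwise non-adjacent at distance $\Omega(1)$, for instance in a bounded power of $Q$. This either gives many pairwise far cycles of $Q$, which lift to $k$ cycles of $H$ pairwise at $G$-distance greater than $d$, or a hitting set of $f_{\mathrm{EP}}(k)$ clusters. Taking one vertex per hitting cluster, together with radius $O(d)$, covers those clusters. Any cycle of $H$ surviving the deletion would then live inside the remaining clusters, and by the long-girth assumption its projection would be a cycle of $Q$ avoiding the hitting set, which is a contradiction.

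The delicate points are two. First, we must ensure that cycles of $H$ contained within a single cluster cannot occur; this is where the short-cycle case above must be exhausted first. Second, we must control the distortion between $Q$ and $G$, so that distance greater than some constant in $Q$ implies distance greater than $d$ in $G$. I expect this lifting/distortion step to be the heart of the proof, and it is what would keep $g$ linear in $d$.
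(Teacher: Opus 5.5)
Your short-cycle step is sound in spirit; it plays the role of the paper's maximal $d$-packing $\mathcal{Z}$ of cycles of length at most $6d+2$. However, the strengthened statement you induct on is false as written. Take $d\ge 2$ and disjoint cycles $C_1,\dots,C_N$. For each pair $i<j$ add a new vertex $m_{ij}$ adjacent to one vertex of $C_i$ and one vertex of $C_j$, with the attachment points on each $C_i$ pairwise more than $2g(d)$ apart along $C_i$. Let $H=\bigcup_i C_i$, which is induced. The only cycles of $H$ are the $C_i$, and any two are at $G$-distance $2\le d$. Yet every ball of radius $g(d)$ meets at most two of them, so any valid $X$ has $|X|\ge N/2$. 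You would have to restrict to $H$ of the form $G-B_G(S,R)$ and control shortcuts through the deleted balls, or, as the paper does, stay in $G$ throughout.

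The real gap is the large-girth case, which carries essentially all of the difficulty; the coarsening you sketch fails there at every step.

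\emph{First}, KPR-type partitions into clusters of weak diameter $O(d)$ with every $d$-ball meeting $O(1)$ clusters exist for $K_t$-minor-free graphs. They do not exist for all graphs uniformly in $d$: that would bound the Assouad--Nagata dimension of all graphs, which fails already for grids of growing dimension.

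\emph{Second}, and more fundamentally, excluding short cycles bounds the \emph{length} of cycles but not their \emph{$G$-diameter}. A cycle of $H$ can be arbitrarily long and still lie inside a single cluster, or project to a closed walk in $Q$ that only backtracks. So ``the projection contains a genuine cycle of $Q$'' is false, and such cycles are invisible to any hitting set of $Q$.

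\emph{Third}, the classical Erd\H{o}s--P\'osa theorem in $Q$ yields only vertex-disjoint cycles. Requiring them to be pairwise far apart in $Q$ is a $d=1$ instance of the very theorem you are proving, which already needs balls (complete graphs). Disjoint cycles in a power of $Q$ are not cycles of $Q$ at all. So this step is circular.

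What is missing is a way to anchor long cycles coarsely. In the paper (the case $\ell=3$ of \cref{thm:main-technical-version}) this comes from \cref{lem:medium-cycle-or-small-span}. Within distance $3d$ of every cycle there is either a cycle of length at most $6d+2$, or a cycle $C'$ whose $d$-neighbourhood has $C'$-span at most $\ell$: BFS-projections onto $C'$ of adjacent vertices land within distance $\ell$ along $C'$. A maximal $2d$-packing $\mathscr{C}$ of such cycles is a bounded skeleton onto which nearby connected pieces project to short intervals. High-span edges and ears leaving the neighbourhood of $\mathscr{C}$ are then handled in two ways. If many are well separated, they are combined into $k$ far-apart cycles via \cref{thm:simonovits}. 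Otherwise they are hit by few balls, using interval-Helly arguments and \cref{thm:alon} on tree-decompositions of the remaining pieces (forests when $\ell=3$). Nothing resembling a global partition with controlled distortion is used or needed.
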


\cref{thm:ErdosPosaCyclesFarApart} was conjectured in 2024 independently by Chudnovsky and Seymour\footnote{The problem was discussed in March 2024 at the Barbados Graph Theory Workshop held at the Bellairs Research Institute of McGill University.} and by \citet{ahn.gollin:coarse-SODA}. Furthermore, \citet{ahn.gollin:coarse-SODA} solved the cases $k=2$ and arbitrary $d$ as well as $d=1$ and arbitrary $k$. They also pointed out that balls are needed in \cref{thm:ErdosPosaCyclesFarApart}, even for the $d=1$ case, since in complete graphs, no pair of cycles have distance more than $1$, yet an unbounded number of vertices are needed to hit all cycles. 

%The following is our main result, which generalizes  Theorem~\ref{thm:ErdosPosaCyclesFarApart} to long cycles while obtaining the optimal bounds for every fixed $\ell$.

Long cycles, that is, cycles of length at least $\ell\geq 3$, form one of the most
well-studied instances of the Erdős--Pósa property, with a long line of work
in the classical setting of vertex-disjoint packings. A 1988 result of
\citet{ThomassenPresence1988} implies that for every $\ell$, long cycles have
the Erdős--Pósa property: every graph either contains $k$ vertex-disjoint
cycles of length at least $\ell$, or a set of at most $f(k,\ell)$ vertices
meeting all of them. Thomassen's argument gives
$f(k,\ell)\in 2^{\ell^{\Oh(k)}}$. The bound was improved to $\Oh(\ell k^2)$
by \citet*{Birmele2007}, then to $\Oh(\ell k\log k)$ by \citet*{FioriniLong2014},
and finally to the asymptotically optimal $\Theta(\ell k+k\log k)$ by
\citet*{MOUSSET201721}. The following theorem, which is our main result,
generalizes this line of work to the far-apart setting, and equally
generalizes \cref{thm:ErdosPosaCyclesFarApart} to long cycles, with optimal
bounds for every fixed $\ell$.

\begin{thm}
\label{thm:main-in-intro}
There exist functions $f:\mathbb{N}^2\to\mathbb{N}$ and $g:\mathbb{N}\to\mathbb{N}$, 
%with $g(d)\in O(d)$, 
such that for all integers $k$, $d$, and $\ell$ with $k\ge 1$, $d\ge 1$, and $\ell\geq3$, and for every graph $G$,  either $G$ contains $k$ cycles of length at least $\ell$ that are pairwise at distance greater than $d$ in $G$, or there exists a subset $X$ of vertices of $G$ with $|X|\leq f(k,\ell)$ such that  $G-B_G(X,g(d))$ has no cycles of length at least $\ell$.  
Furthermore, this holds with $f(k,\ell) \in \Oh( \ell k \log k)$ and $g(d) \in \Oh(d)$.
\end{thm}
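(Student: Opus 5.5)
We prove \cref{thm:main-in-intro} by adapting the Fiorini--Herinckx style argument for long cycles (the $\Oh(\ell k\log k)$ bound) to the coarse setting. The plan is to induct on $k$ with a ``frame'' argument in place of a minimal-counterexample argument. Fix $d$ and set $r=c\cdot d$ for a suitable constant $c$. We may assume $G$ is connected and contains a cycle of length at least $\ell$.

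\textbf{Step 1: a layering.} Pick a shortest cycle $C$ among cycles of length at least $\ell$; such a cycle has length at most roughly $2\ell$ in the relevant sense, or else it is geodesic-like. Consider the BFS layering from $V(C)$.

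If $G-B_G(V(C),r)$ contains no long cycle, we take $X=V(C)$. This is too large in general, so instead we cover $C$ by $\Oh(\ell)$ balls. The key observation is that a shortest long cycle is ``$\ell$-coarsely geodesic'': any two vertices of $C$ at distance more than about $\ell$ along $C$ are at distance comparable to their $C$-distance in $G$. Otherwise a shortcut would create a shorter long cycle. So $C$ splits into $\Oh(1)$ arcs of length $\ell$ plus geodesic segments.

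\textbf{Step 2: the recursion.} The core is a balanced separation lemma. Suppose $G$ has no $k$ long cycles pairwise at distance more than $d$. We then find a set $Y$ of $\Oh(\ell)$ vertices (or $\Oh(\ell)$ balls centered on a geodesic-like structure) whose $r$-ball splits the long cycles of $G$ into two parts, $G_1$ and $G_2$. These parts are far apart, and each has no $k_i$ far-apart long cycles, with $k_1+k_2\le k$ and $\max(k_1,k_2)\le 2k/3$. Recursing gives $f(k)\le f(k_1)+f(k_2)+\Oh(\ell)$ with a balanced split, which solves to $\Oh(\ell k\log k)$.

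To obtain the split, we take the far-apart packing problem in the graph obtained by contracting balls of radius $\Theta(d)$. Following the Dujmović--Joret--Micek--Morin approach, we use a Voronoi-type partition around a maximal $2d$-separated set of ``centers'' of long cycles, building an auxiliary graph $H$. In $H$, long cycles of $G$ far apart correspond to vertex-disjoint structures. Then we apply a classical-type argument: an Erdős--Pósa-type statement on $H$ for the relevant family, whose hitting set lifts to $X$ with $g(d)=\Oh(d)$.

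\textbf{Step 3: lifting.} A long cycle in $G$ avoiding $B_G(X,g(d))$ must project to a structure in $H$ avoiding the hitting set. Projected cycles might become short, of length below $\ell$, in $H$. To handle this we keep track of $\ell$ by showing that a long cycle in $G$ living within a bounded-radius region still yields either a long cycle or a ``fat'' object in $H$ that the hitting set must meet.

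\textbf{Main obstacle.} The main obstacle is precisely this interaction between length and coarsening. Contraction of radius-$\Theta(d)$ balls can turn long cycles into short ones or into trees, while $f$ must not depend on $d$. I would first try to handle it by treating long cycles contained in a single $\Oh(d)$-ball separately. Each such ball is one vertex of $H$, so it is charged to the hitting set at unit cost. Then only cycles spanning many cells remain, and their projections have length at least about $\ell$ after a subdivision argument. The recursion in Step 2 then gives the $\Oh(\ell k\log k)$ bound.
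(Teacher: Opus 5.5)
There is a genuine gap: the balanced separation lemma in Step 2, on which everything rests, is asserted without any construction, and as stated it is false. With $k_1+k_2\le k$ and an additive cost of $\Oh(\ell)$ per split, induction from $f(1)=0$ gives $f(k)\le c\ell(k_1-1)+c\ell(k_2-1)+c\ell\le c\ell(k-1)$. That is $\Oh(\ell k)$, not $\Oh(\ell k\log k)$. For $\ell=3$ this would beat the $\Omega(k\log k)$ lower bound, which survives in the far-apart setting. To see this, take a cubic graph on $n$ vertices with girth $\Omega(\log n)$ and replace every edge by a path of length more than $2g(d)$. Any $d$-packing then has $\Oh(n/\log n)$ cycles. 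Each ball of radius $g(d)$ acts on cycles like deleting at most one branch vertex or one original edge, so $\Omega(n)$ balls are needed. Hence no separator of $\Oh(\ell)$ balls with your balancing property exists in general, and the $\log k$ must come from a different source. Step 1 has a related problem: a shortest long cycle can be arbitrarily long compared to $d\ell$, and being coarsely geodesic makes it \emph{harder}, not easier, to cover by $\Oh(\ell)$ balls of radius $\Oh(d)$.

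Steps 2--3 name the key obstacle but do not resolve it. After contracting cells of radius $\Theta(d)$:
\begin{itemize}
\item a long cycle of $G$ can project to a closed walk in $H$ with no cycle at all, e.g.\ by leaving a cell and returning through the same pair of cells;
\item a short cycle of $H$ can lift to a long one;
\item lengths shrink by a factor $\Theta(d)$, so ``projections have length about $\ell$'' is false;
\item vertex-disjointness in $H$ does not give distance $>d$ in $G$.
\end{itemize}
You never specify which family in $H$ you pack or cover, or which Erd\H{o}s--P\'osa theorem applies to it.

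The one piece that matches the paper is your treatment of cycles confined to an $\Oh(d)$-ball. The paper likewise takes a maximal $d$-packing $\mathcal{Z}$ of cycles of length between $\ell$ and $6d+2$ and covers each from one vertex at radius $7d+1$. For the remaining cycles the paper needs substantial machinery:
\begin{itemize}
\item \cref{lem:medium-cycle-or-small-span}, to obtain a maximal $2d$-packing $\mathscr{C}$ of small-span cycles;
\item BFS projections onto $\bigcup\mathscr{C}$, so that objects near $\mathscr{C}$ project to short arcs;
\item Simonovits' theorem, which is where the $k\log k$ comes from, applied to subcubic graphs formed by $\bigcup\mathscr{C}$ plus pairwise far-apart paths or ears;
\item Birmel\'e's treewidth bound, combined with Alon's theorem for subgraphs of a tree with boundedly many (at most five) components;
\item padding, to keep the radius $\Oh(d)$;
\item the auxiliary graph $G_{\Aux}$, which turns problematic ears into edges so that the residual cycles become connected small-span subgraphs handled by \cref{lem:cycle-helly-for-small-span-collection}.
\end{itemize}
Without a substitute for this, your outline does not yield a proof.
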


We note that \citet{ahn.gollin:coarse-arxiv} recently proved this result for $d=1$.

Let us now comment on the optimality of the bounds in \cref{thm:main-in-intro}.
First, $f$ necessarily depends on both $k$ and $\ell$. This is witnessed by the `$d=0$ case' of the problem (which is not covered by
\cref{thm:main-in-intro}): there, balls are not needed, and the optimal size of
a hitting set was determined to be $\Theta(\ell k + k\log k)$ by
\citet{MOUSSET201721}. Second, the bound $g(d) \in \Oh(d)$ in \cref{thm:main-in-intro} is best possible since $g(d)\geq d$ is necessary in any such result.\footnote{\textit{Proof.} We show that there is no function $f$ such that \cref{thm:main-in-intro} holds with $f$ and $g(d)=d-1$. Assume for a contradiction that there is a function $f$. Let $s\coloneqq f(k,\ell)$. Construct a graph $G$ as follows: Write $V(K_{2s+1})=[2s+1]$ and let $K_{2s+1}^*$ be the graph obtained from $K_{2s+1}$ by replacing each edge $ij$ with a path $P_{ij}$ between $i$ and $j$, and of length $d$. Let $\set{C_1, \dots, C_{2s+1}}$ be a collection of pairwise vertex-disjoint cycles each of length $\ell$ such that $V(C_i)\cap V(K_{2s+1}^*)=\set{i}$ for all $i\in [2s+1]$. Let $G\coloneqq K_{2s+1}^*\cup C_1\cup \cdots \cup C_{2s+1}$. Since $G-[2s+1]$ is a forest, every cycle in $G$ has a vertex in $[2s+1]$. Moreover, since distinct $i,j\in[2s+1]$ have $\dist_G(i,j)=d$, $G$ does not have two cycles at distance more than $d$. Hence $G$ does not have a $d$-packing of $k$ cycles each of length at least $\ell$. To reach a contradiction, we now show that for every set $X\subseteq V(G)$ with $|X|\leq s$, $G-B_G(X,d-1)$ has a cycle of length at least $\ell$. Notice that every vertex $v \in V(G)$ either lies in $V(C_i)$ for some $i\in [2s+1]$, or is an inner vertex of $P_{ij}$ for some $i,j\in [2s+1]$. Then $B_G(v,d-1)$ intersects either one or two cycles in $\set{C_1, \dots, C_{2s+1}}$ since each $P_{ij}$ has length $d$. It follows that $B_G(X, d-1)=\bigcup_{v\in X}B_G(v, d-1)$ intersects at most $2|X| \leq 2s$ cycles in $\set{C_1, \dots, C_{2s+1}}$, hence missing at least one of them, as required. $\square$} 
Third, for every fixed $\ell$, the bound $f(k,\ell)\in\Oh(\ell k\log k)$ is optimal as a function of $k$ by standard lower bounds for the Erd\H{o}s--P\'{o}sa theorem.

We now explain how \cref{thm:main-in-intro} is obtained, namely how we
generalize \cref{thm:ErdosPosaCyclesFarApart} to cycles of length at least
$\ell$ while attaining optimal bounding functions for fixed $\ell$. At a high level, our proof follows the same general framework as that of \cite{ErdosPosaCyclesFarApart}. However, some key ingredients break down when moving beyond $\ell=3$, and others stand in the way of optimal bounds.

The proof of \cref{thm:ErdosPosaCyclesFarApart} in
\cite{ErdosPosaCyclesFarApart} exploits the case $\ell=3$ in an essential way:
hitting all cycles means leaving a forest behind. The argument in \cite{ErdosPosaCyclesFarApart} is built
around cycles whose surrounding balls induce unicyclic subgraphs, culminating in an application of the Helly property for subtrees of a tree (via the
Gyárfás--Lehel theorem).
% to a bounded number of forests covering the graph.
For general $\ell$, this structure is simply not available. Namely, a graph with no
cycle of length at least $\ell$ need not resemble a forest and may be locally
dense. Our proof instead relies on two coarser ingredients. First, by a
theorem of \citet{Birmele03}, graphs with no cycle of length at least $\ell$
have treewidth less than $\ell-1$. Thus the forests from
\cite{ErdosPosaCyclesFarApart} become graphs of bounded treewidth, and the
Helly-type argument must be carried out not on these graphs themselves but on
the trees underlying their tree-decompositions. Second, since unicyclic
neighborhoods have no analogue for general $\ell$, we introduce the notion of
the \emph{span} of a cycle $C$, which measures how far apart along $C$ the
BFS-projections of the endpoints of nearby edges can be. Cycles of small span
provide exactly the coarse control that unicyclic balls provided for $\ell=3$:
objects near $C$ project onto short intervals of $C$. A final difficulty is
quantitative: a direct implementation of this strategy produces balls of
radius $\Oh(d\ell)$, and keeping the radius at $\Oh(d)$, independent of
$\ell$, requires an additional padding argument that spreads a small hitting
set on the cycles of the packing into a slightly larger but well-distributed
one.

Regarding the bounding function $f$, the proof in
\cite{ErdosPosaCyclesFarApart} yields $f(k)\in\Oh(k^{18}\polylog k)$, largely
because it invokes a theorem of \citet{gyarfas.lehel:helly} on the
approximate Helly property of subgraphs of a tree with a bounded number of
components. Replacing this ingredient with a theorem of \citet{ALON2002249}
(see \cref{thm:alon} in \cref{sec:tools}) already lowers the bound to
$\Oh(k^3\log k)$, but no further: the approach in
\cite{ErdosPosaCyclesFarApart} applies local arguments to every pair of
cycles in a packing of up to $k-1$ cycles, and therefore cannot produce a subquadratic bound. We
instead analyze all cycles in the packing at once, and in a global manner. Altogether this yields \cref{thm:main-in-intro} with
$f(k,\ell)\in\Oh(\ell k\log k)$ and $g(d)\in\Oh(d)$ and, as a special case,
the optimal bound $f(k)\in\Oh(k\log k)$ in \cref{thm:ErdosPosaCyclesFarApart},
where optimality follows from standard lower bounds for the Erdős--Pósa
theorem.

\cref{thm:ErdosPosaCyclesFarApart,thm:main-in-intro} are part of a larger body
of work. The \defin{far-apart Erd\H{o}s--P\'{o}sa property}, as we call it,
asks for either a large packing of objects that are pairwise far apart, or a
small number of small-radius balls that hit all of the objects. Another
example is the following ``coarse Gallai theorem'' of
\citet{distel2026coarsegallaitheorem}: for every graph $G$ and every set $A$
of its vertices, either $G$ contains $k$ paths with endpoints in $A$
that are pairwise at distance at least $d$, or there is a set $X$ of at most
$f(k)$ vertices such that every such path meets $B_G(X,g(k,d))$. The radius of the
balls depends on $k$, in contrast with \cref{thm:main-in-intro} where it is
$\Oh(d)$ only.

\subsection{Coarse Motivations}

From the point of view of graph minor theory, one can restate the Erdős--Pósa theorem as follows: 
There exists a function $f(k)$ 
such that for every positive integer $k$, and for every graph $G$, either $G$ contains $k$ vertex-disjoint subgraphs each containing a $K_3$ minor, or there exists a subset $X$ of vertices of $G$ with $|X|\leq f(k)$ such that $G-X$ has no $K_3$ minor. 
Our far-apart Erd\H{o}s--P\'{o}sa property (from \cref{subsec:far-apart-property}) is inspired by a new line of research around the Erdős--Pósa theorem, where the usual notion of minors is replaced with that of ``fat'' minors. 

This notion comes from a recently developed field of research called {\em coarse graph theory} that takes its roots in the pioneering work of \citet{GP23}, who studied   
graph minors from the point of view of coarse geometry. 
Quoting Georgakopoulos and Papasoglu, the general goal of coarse graph theory is 

\begin{centering}
    \quad \quad {\it ``to view graphs ‘from far away’ to see their large-scale geometry and its implications.''}
\end{centering}

This novel viewpoint lead to a wealth of new and exciting questions and conjectures in graph theory that have attracted the attention of researchers in the last few years. 
This includes a coarse Menger conjecture (disproved recently by~\citet{NSS25}),  
a coarse grid conjecture (disproved recently by~\citet{AD25}), and 
the following coarse Erd\H{o}s--P\'osa conjecture.\footnote{Very recently, a proof of the conjecture by Sandra Albrechtsen, Marthe Bonamy, Romain Bourneuf, and James Davies was announced at the Focused Workshop on Erd\H{o}s--P\'osa problems held in March 2026 in Będlewo,  Poland.} 
(Necessary definitions are given below.)   

\begin{conj}[Coarse Erd\H{o}s--P\'osa conjecture \cite{GP23}]
\label{conj:coarse-EP}
There exists functions $f:\N \to \N$ and $g:\N \to \N$, with $g(d)\in \Oh(d)$, such that, for all positive integers $k$ and $d$, and for every graph $G$, either $G$ contains $k$ minor-models of $K_3$ that are $d$-fat and pairwise at distance more than $d$, or there exists a subset $X$ of vertices of $G$ with $|X|\leq f(k)$ such that  $G-B_G(X,g(d))$ has no $d$-fat $K_3$ minor-model. 
\end{conj}

To explain the notion of fat minor-models, it will be helpful to first consider the following way of defining the usual notion of minor-models: A \defin{minor-model} of a graph $H$ in a graph $G$ is a collection of vertex-disjoint connected subgraphs $\set{M_v:v\in V(H)}$ and a collection of internally-disjoint paths $\set{P_e: e\in E(H)}$ such that for every edge $e=uv$ of $H$, the path $P_e$ has one endpoint in $M_u$ and the other endpoint in $M_v$, and none of its internal vertices is in any subgraph $M_w$ with $w\in V(H)$. 
Now, given a positive integer $d$, a minor-model of $H$ in $G$ is \defin{$d$-fat} if it satisfies the following extra properties:
% \defin{$d$-fat minor-model} of $H$ in $G$ is a minor-model of $H$ in $G$ as above satisfying the following extra properties: 
\begin{itemize}
\item $M_u$ and $M_v$ are at distance at least $d$ in $G$ for every two distinct vertices $u,v\in V(H)$;
\item $P_e$ and $P_f$ are at distance at least $d$ in $G$ for every two distinct edges $e,f\in E(H)$, and 
\item $M_u$ and $P_e$ are at distance at least $d$ in $G$ for every vertex $u\in V(H)$ and edge $e\in E(H)$ not incident to $u$. 
\end{itemize}
Informally, every two objects from the definition of minor-model are required to be far apart in $G$, except when they are required to intersect as per the original definition of minor. It might be helpful to think of fat minors as minors that would survive even if some disjoint balls of bounded radius were contracted into vertices. Alternatively, at a very intuitive level, the difference between minors and fat minors is that minors are essentially subgraphs that are allowed to be spread out, whereas fat minors are required to be spread out for the purpose of being ``seen'' even when looking at the graph from far away.

%It is important to distinguish~\cref{thm:main-in-intro} and~\cref{conj:coarse-EP} as separate problems since neither immediately implies the other. This is because all pairs of vertices in a long cycle are allowed to be close (or even adjacent) to each other in $G$, but there are many pairs of vertices in a $d$-fat $K_3$ minor-model that are required to have distance at least $d$ in $G$. On the other hand, a $d$-fat minor-model of $K_3$ must contain a cycle of length at least $6d$.
%Thus,~\cref{conj:coarse-EP}, if true, suggests that a statement similar to~\cref{thm:ErdosPosaCyclesFarApart} could hold for long cycles instead of all cycles, for instance, a statement like~\cref{thm:main-in-intro}.

\cref{thm:main-in-intro} and \cref{conj:coarse-EP} are closely related but
incomparable statements: neither implies the other. A first difference lies in
how the parameters interact. In \cref{conj:coarse-EP}, a single parameter $d$
governs everything: the fatness of the minor-models, the distances between
them, and, as a consequence, the lengths of the cycles involved, since every
$d$-fat minor-model of $K_3$ contains a cycle of length at least $6d$. In
\cref{thm:main-in-intro}, by contrast, the length threshold $\ell$ and the
distance parameter $d$ are independent of each other. A second difference lies
in the nature of the objects being packed. A $d$-fat $K_3$ minor-model is
intrinsically spread out: many pairs of its vertices are required to be at
distance at least $d$ in $G$. A cycle of length at least $\ell$ carries no such
requirement: any two of its vertices may be close, or even adjacent, in $G$.
For this reason, \cref{conj:coarse-EP} does not imply \cref{thm:main-in-intro},
even in the regime $\ell\leq 6d$: a hitting set for $d$-fat $K_3$ minor-models
need not come close to any long cycle. (For instance, large complete graphs
contain no $2$-fat $K_3$ minor-model but many long cycles.) Conversely,
\cref{thm:main-in-intro} does not imply \cref{conj:coarse-EP}, since a packing
of pairwise far-apart long cycles need not yield even a single fat $K_3$ minor-model.

This paper is organised as follows. 
In \cref{sec:tools}, we introduce the main tools we use in our proofs, and in \cref{sec:proof}, we prove \cref{thm:main-in-intro}.

\section{Tools}
\label{sec:tools}

Our proof makes use of three tools. The first is a key ingredient of Simonovits's~\cite{Simonovits67} proof of the Erd\H{o}s--P\'{o}sa theorem, which was published in 1967 just two years after the original paper. The second is a beautiful statement generalizing the Helly property of a family of subtrees in a fixed host tree, which is due to \citet{ALON2002249} (improving on an earlier result of Gyárfás and Lehel~\cite{gyarfas.lehel:helly}). The last is a theorem of \citet{Birmele03} which says that graphs with no long cycles have bounded treewidth.

For all positive integers $k$, define\footnote{Here and throughout, $\log x$ denotes the base-$2$ logarithm of $x$.}
\[
  \mathdefin{s(k)}\coloneqq 
    \begin{cases}
      \lceil 4k(\log k + \log\log k +4)\rceil & \textrm{if $k\geq2$,}\\
      2 & \textrm{if $k=1$.}
    \end{cases}
\]

\begin{thm}[\citet{Simonovits67}]\label{thm:simonovits}
  Let $k$ be a positive integer and let $G$ be a graph with all vertices of degree $2$ or $3$. If $G$ contains at least $s(k)$ vertices of degree $3$, then $G$ contains $k$ pairwise vertex-disjoint cycles.
\end{thm}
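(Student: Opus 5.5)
The plan is to follow Simonovits's original argument: suppress degree-$2$ vertices and grow a longest-possible tree of degree-$3$ vertices whose leaves see many back edges, which forces a short cycle. Induction on $k$ then does the rest. The case $k=1$ is trivial, since a graph with minimum degree at least $2$ contains a cycle. So assume $k\ge2$ and that the statement holds for $k-1$. First we reduce to cubic multigraphs. Suppress every degree-$2$ vertex; a component that is just a cycle already gives a cycle. We obtain a cubic multigraph $H$, possibly with loops and parallel edges, on $n\ge s(k)$ vertices. Vertex-disjoint cycles in $H$ lift to vertex-disjoint cycles in $G$. So it suffices to show that every cubic multigraph on at least $s(k)$ vertices has $k$ disjoint cycles.

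The key lemma is that every cubic multigraph on $n$ vertices has a cycle of length at most $2\log n+O(1)$. To see this, run a BFS from any vertex $v$. If no cycle has length at most $2r$, the ball of radius $r$ induces a tree in which every non-leaf vertex has degree $3$. That tree has at least $3\cdot 2^{r-1}$ vertices at distance $r$, so $r\le \log n+O(1)$. We take a shortest cycle $C$, of length $c\le 2\log n+2$.

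Now remove $C$. Exactly $c$ edges leave $C$, counting with multiplicity. Delete $V(C)$, then repeatedly delete vertices of degree at most $1$ and suppress vertices of degree $2$. Each edge leaving $C$ causes the loss of at most a bounded number of further degree-$3$ vertices. The careful count is Simonovits's: removing a cycle of length $c$ destroys at most about $2c$ branch vertices in total, counting the cycle itself together with the vertices that drop to degree at most $2$ and get suppressed. The remaining graph $H'$ is cubic, with at least $n-2c\ge n-4\log n-O(1)$ vertices. If this is at least $s(k-1)$, induction gives $k-1$ disjoint cycles in $H'$. These lift to $H-V(C)$ and avoid $C$, so together with $C$ we have $k$ disjoint cycles.

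The main obstacle is the arithmetic. We must verify $s(k)-4\log s(k)-O(1)\ge s(k-1)$ with the exact constants in $s(k)=\lceil 4k(\log k+\log\log k+4)\rceil$. The difference $s(k)-s(k-1)$ is about $4(\log k+\log\log k+4)+4/\ln 2$. On the other hand, $4\log s(k)\approx 4(\log k+2+\log(\log k+\log\log k+4))$. The $\log\log k$ term and the additive constant $4$ are exactly what absorb this. Still, the small cases, especially $k=2,3$ and the transition from $s(1)=2$, need to be checked by hand, and the shortest-cycle bound must be stated precisely, for example $c\le 2\lceil\log(n/3+1)\rceil$. If the inductive inequality fails for small $k$, I would instead apply the lemma to the current graph directly with $n=s(k)$ fixed. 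Each iteration then costs at most $2c\le 4\log s(k)+O(1)$ vertices, so $k$ iterations need $n\ge k(4\log s(k)+O(1))$. This is precisely the form of $s(k)$, since $\log s(k)=\log k+\log\log k+O(1)$.
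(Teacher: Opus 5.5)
The paper does not prove this statement; it quotes it from Simonovits. Your outline is Simonovits's own argument: take a shortest cycle, delete it, prune, and induct on $k$. This is also the standard textbook proof of this lemma with exactly this $s(k)$. The strategy is correct, but the two steps that carry the quantitative content are only asserted. The first of them cannot be left as ``a bounded number of vertices per edge''.

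\emph{Loss of at most $2c$.} The constant matters here. A loss of $3c$ would require roughly $s(k)-6\log s(k)\ge s(k-1)$, which fails for large $k$. The bound comes from an amortized count over the cut, not a local count per edge.
\begin{itemize}
\item Let $m\le c$ be the number of edges between $V(C)$ and the rest.
\item Move vertices of degree at most $1$ in the remainder, one at a time, to the deleted side. Each such vertex sends at least two edges into the deleted side and at most one out of it, so every move decreases the number of cut edges by at least one.
\item After the last of, say, $j$ moves, at most $m-j$ cut edges remain. Every remaining vertex has degree $2$ or $3$ in the remainder, and each vertex of degree $2$ there is incident with its own cut edge.
\end{itemize}
So at least $n-c-j-(m-j)\ge n-2c$ vertices of degree $3$ survive. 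You can then apply the induction hypothesis, for multigraphs with all degrees $2$ or $3$, directly to the remainder without suppressing anything.

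\emph{Arithmetic.} The inductive inequality holds for every $k\ge 2$, so no fallback is needed. (The fallback as written is also off: $|H|$ may exceed $s(k)$, so $n$ cannot be fixed at $s(k)$.)

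Your BFS argument, done exactly, gives
$c\le 2\log\frac{n+2}{3}+2=2\log\frac{2(n+2)}{3}<2\log n$ for $n\ge 5$. So the additive constant actually works in your favour.

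Let $q_k\coloneqq \log k+\log\log k+4$. Since $x-4\log x$ is increasing for $x\ge 6$ and $n-2c$ is an integer (which absorbs the ceiling), it suffices to show $4kq_k-4\log(4kq_k)\ge 4(k-1)q_{k-1}$.
\begin{itemize}
\item For $k\ge 3$ we have $q_k\le 4\log k$. Hence $\log(4kq_k)\le 2+\log k+\log(4\log k)=q_k$, and the left side is at least $4(k-1)q_k\ge 4(k-1)q_{k-1}$.
\item For $k=2$, $s(2)=40$ and $40-4\log 40>2=s(1)$.
\end{itemize}
The inequality $\log(4kq_k)\le q_k$ is exactly where the $\log\log k$ term and the $+4$ in $s(k)$ are used.
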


If a graph has at least one vertex, then it is \defin{non-null}.

\begin{thm}[\citet{ALON2002249}]\label{thm:alon}
For all positive integers $k$ and $c$, for every tree $T$ and every collection $\mathcal{A}$ of non-null subgraphs of $T$ each having at most $c$ components, either
\begin{enumeratealph}
    \item $\mathcal{A}$ has $k$ pairwise vertex-disjoint members, or\label{item:packing:alon}
    \item there exists a subset $X\subseteq V(T)$ with $|X|\leq 2 c^2(k-1)$ such that $X\cap V(A)\not=\emptyset$ for all $A\in \mathcal{A}$.\label{item:covering:alon}
\end{enumeratealph}
\end{thm}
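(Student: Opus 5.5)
The plan is to prove \cref{thm:alon} by LP duality: I bound the covering number and the packing number separately against their common fractional value. Let $\nu$ be the maximum number of pairwise vertex-disjoint members of $\mathcal{A}$, and $\tau$ the minimum size of a set $X\subseteq V(T)$ meeting every member. Let $\tau^*$ and $\nu^*$ be their fractional relaxations:
\begin{itemize}
\item $\tau^*$ is the minimum of $\sum_v w_v$ over weights $w\geq 0$ on $V(T)$ with $w(V(A))\geq 1$ for all $A\in\mathcal{A}$;
\item $\nu^*$ is the maximum of $\sum_A y_A$ over $y\geq 0$ with $\sum_{A\ni v} y_A\leq 1$ for all $v$.
\end{itemize}
By LP duality $\tau^*=\nu^*$. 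If \ref{item:packing:alon} fails then $\nu\leq k-1$, so it suffices to prove $\tau\leq c\,\tau^*$ and $\nu^*\leq 2c\,\nu$. Together these give $\tau\leq 2c^2\nu\leq 2c^2(k-1)$, which is \ref{item:covering:alon}.

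\textbf{Rounding a fractional cover.} Take an optimal fractional cover $w$ and root $T$. Each $A\in\mathcal{A}$ has at most $c$ components and total weight at least $1$, so some component $A'$ (a subtree) has $w(V(A'))\geq 1/c$. Call these the heavy components; it suffices to hit every heavy component.
\begin{enumerate}
\item Among vertices $v$ whose subtree $T_v$ carries weight at least $1/c$, choose a deepest one and put it in $X$.
\item Every heavy component meeting $T_v$ must contain $v$. Otherwise it lies inside $T_u$ for some child $u$ of $v$. That subtree has weight less than $1/c$ by the choice of $v$, which is impossible for a heavy component.
\item Delete $T_v$, discard the components now hit, and repeat on the remaining tree with the restricted weights. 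Each round consumes at least $1/c$ of the weight of $w$ and adds one vertex to $X$, so $|X|\leq c\,\tau^*$.
\end{enumerate}

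\textbf{Rounding a fractional packing.} Take an optimal fractional packing $y$. For each component of each member, its \emph{top} is its vertex closest to the root. The key observation is that if two subtrees of a rooted tree intersect, then the top of one of them lies in the other. Hence if $A\cap B\neq\emptyset$, some top of $A$ lies in $B$ or some top of $B$ lies in $A$. Summing,
\[
\sum_{A}\sum_{B:\,B\cap A\neq\emptyset} y_Ay_B\;\leq\; 2\sum_A y_A\sum_{t\text{ top of }A}\ \sum_{B\ni t} y_B\;\leq\; 2c\sum_A y_A,
\]
using that $A$ has at most $c$ tops and that the packing constraint bounds each inner sum by $1$. So some $A$ in the support of $y$ satisfies $\sum_{B\cap A\neq\emptyset}y_B\leq 2c$. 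Put this $A$ into the packing and delete every member meeting it (including $A$ itself); this removes at most $2c$ of the total weight. The restricted $y$ is still a fractional packing of the remaining family, so we iterate, and the greedy procedure finds at least $\nu^*/(2c)$ pairwise disjoint members.

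I expect the main obstacle to be the second step: choosing the right charging scheme for the packing side. The ``top of a subtree'' trick and the averaging over pairs must be set up so that the loss is only linear in $c$ per step, and the $2c$ bound has to be preserved after restriction. The cover-rounding step is a routine generalization of the Helly argument for subtrees.
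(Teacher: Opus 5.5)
The paper does not prove this statement; it quotes it from Alon and uses it as a black box (with $c=5$), so there is no in-paper proof to compare against. Your argument is correct and complete, and as far as I recall it is essentially Alon's own LP-duality proof. It combines $\tau^*=\nu^*$ with two roundings: $\tau\le c\,\tau^*$, obtained by repeatedly taking a deepest vertex whose subtree carries weight at least $1/c$, and $\nu^*\le 2c\,\nu$, obtained from the double count over tops of components followed by greedy extraction. Together these give $\tau\le 2c^2\nu\le 2c^2(k-1)$.

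A few small points deserve a line each in a write-up:
\begin{itemize}
\item The top of a connected subgraph of a rooted tree is an ancestor of all of its vertices. This is what makes the observation that ``the top of one subtree lies in the other'' valid.
\item A suitable $v$ exists in every round, because the current root qualifies as long as some heavy component is still unhit.
\item An unhit heavy component avoids $T_v$ entirely, so it keeps its full weight after $T_v$ is deleted.
\end{itemize}
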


\begin{comment}
% 
% Alon's theorem for tree-decompositions.
% 

\begin{thm}[\citet{ALON2002249}]\label{thm:alon}
For all integers $k,c,t\geqslant 1$, for every graph $F$ of treewidth less than $t$, for every collection $\mathcal{A}$ of non-null subgraphs of $F$ each having at most $c$ components, either
\begin{enumeratealph}
    \item $\mathcal{A}$ has $k$ pairwise vertex-disjoint members, or\label{item:packing:alon}
    \item there exists a subset $X\subseteq V(F)$ with $|X|\leq 2 t c^2(k-1)$ such that $X\cap V(A)\not=\emptyset$ for all $A\in \mathcal{A}$.\label{item:covering:alon}
\end{enumeratealph}
\end{thm}
\end{comment}

% Finally, we need the fact that graphs with no long cycles have bounded treewidth.

\begin{thm}[Birmele~\cite{Birmele03}]
\label{thm:birmele}
For every integer $\ell\geq3$, 
every graph containing no cycle of length at least $\ell$ has treewidth less than $\ell-1$.
\end{thm}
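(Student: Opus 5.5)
The statement immediately above is Birmel\'e's theorem (\cref{thm:birmele}): a graph with no cycle of length at least $\ell$ has treewidth less than $\ell-1$. The plan is to reduce to $2$-connected graphs and then build a tree-decomposition from a depth-first search tree, using the absence of long cycles to keep the bags small.

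\textbf{Reduction to $2$-connected graphs.} Treewidth of a graph equals the maximum treewidth over its blocks, since tree-decompositions of blocks can be glued along cut vertices. So we may assume $G$ is $2$-connected; trees and $K_2$ are trivial. Note also that $G$ has no path on $\ell$ or more vertices whose endpoints are joined by an edge, since that would close a long cycle.

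\textbf{Key construction.} Take a DFS tree $T$ of $G$ rooted at $r$. Every edge of $G$ joins an ancestor--descendant pair. For each vertex $v$, let the bag $B_v$ consist of $v$ together with those proper ancestors $u$ of $v$ that are adjacent to some vertex of the subtree $T_v$. Use $T$ itself as the decomposition tree.
\begin{enumerate}
\item Every edge $uw$, with $u$ an ancestor of $w$, lies in $B_w$.
\item The vertex $u$ lies in $B_v$ exactly when $v=u$, or $v$ is a descendant of $u$ on a root-ward path from a neighbour of $u$ below $v$. These $v$ form a connected subtree containing $u$.
\end{enumerate}
So $(T,\{B_v\})$ is a tree-decomposition, and it remains to show $|B_v|\le \ell-1$.

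\textbf{Main obstacle: bounding the bags.} Let $u$ be the highest ancestor of $v$ in $B_v\setminus\{v\}$. Then $u$ has a neighbour $w$ in $T_v$, and the tree path from $u$ to $w$ together with the edge $wu$ forms a cycle $C$. This cycle contains every vertex of $B_v$ that lies on the $u$--$v$ path.

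The number of such vertices is at most the length of that path plus one. Since $C$ has length less than $\ell$, the tree path from $u$ to $v$ has at most $\ell-1$ vertices, and all of $B_v$ lies on it. This gives $|B_v|\le \ell-1$, hence width at most $\ell-2$, which is exactly treewidth less than $\ell-1$.

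I expect the delicate point to be verifying that every element of $B_v$ lies between $u$ and $v$. This holds by definition, since all elements of $B_v$ are ancestors of $v$ below or at $u$. The only care needed is the case where the witness neighbour $w$ equals $v$, where $C$ is still a cycle of length at least $3$ because $G$ is simple. If the counting turns out to be off by one, I would refine the bag to $\{v\}\cup\{$ancestors adjacent to $T_v\}$ and argue directly with the cycle through $u$, $w$, and the tree path.
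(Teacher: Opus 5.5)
The paper gives no proof of this statement. It cites Birmel\'e as a black box and uses it only to bound the bags of $\beta_{\geq 0}$ by $\ell-1$. Your depth-first-search argument is a correct, self-contained proof of exactly that bound.

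The pair $(T,\{B_v\})$ is a valid tree-decomposition:
\begin{itemize}
\item every edge of $G$ joins an ancestor--descendant pair of the DFS tree;
\item the nodes whose bags contain $u$ are $u$ together with the tree paths from $u$ down to its descendant neighbours, which is a subtree.
\end{itemize}
Each $B_v$ lies on the tree path from $v$ up to the highest $u\in B_v\setminus\{v\}$. That path is contained in the cycle formed by the tree path from $u$ to a neighbour $w\in V(T_v)$ of $u$, together with the edge $uw$. Hence $|B_v|\le \ell-1$. In fact your argument gives $\tw(G)\le c-1$, where $c$ is the length of a longest cycle, and it constructs the decomposition explicitly. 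The paper needs only the numerical bound, so nothing is lost by citing the result.

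Some small corrections:
\begin{itemize}
\item Your justification of the degenerate case is wrong. If $u$ is the parent of $v$ and $w=v$, then $uw$ is a tree edge, and the tree path plus $uw$ is not a cycle; simplicity of $G$ does not help. The case is harmless, because then $B_v\subseteq\{u,v\}$ and $|B_v|\le 2\le \ell-1$. Whenever $u$ is not the parent of $v$, $w$ lies at least two levels below $u$. So $uw$ is a non-tree edge and $C$ is a genuine cycle of length at least $3$.
\item The reduction to $2$-connected graphs is never used. Connectivity suffices, or for disconnected graphs a DFS forest whose trees are joined arbitrarily in the decomposition tree.
\item Your closing hedge is unnecessary. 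The ``refined'' bag is the one you already defined, and the count is not off by one.
\end{itemize}
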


\section{Proof of the Main Result}
\label{sec:proof}

We denote by \defin{$\NN$} the set of nonnegative integers.
For positive integers $k$, we write \defin{$[k]$} as a
compact form of $\set{1,\dots,k}$. For sets $S$, let \defin{$\binom{S}{2}$} be the set of all subsets of $S$ that have size $2$.

We consider simple, finite, and undirected graphs.
Formally, a graph is pair $(V,E)$ consisting of a finite set of \defin{vertices} $V$ with $V\cap \binom{V}{2}=\emptyset$, and a set of \defin{edges} $E\subseteq \binom{V}{2}$.

Let $G$ be a graph. 
We use the notations $\mathdefin{V(G)}$ and $\mathdefin{E(G)}$ to denote the vertex set of $G$ and the edge set of $G$, respectively. A graph $H$ is a \defin{subgraph} of $G$, written as \defin{$H\subseteq G$}, if $V(H)\subseteq V(G)$ and $E(H)\subseteq E(G)$.
A subgraph of $G$ is \defin{spanning} if its vertex set equals $V(G)$. For a set $S\subseteq V(G)$, $\mathdefin{G[S]}$ denotes the subgraph of $G$ with vertex set $S$ and edge set $\binom{S}{2}\cap E(G)$, and $\mathdefin{G-S}\coloneqq G[V(G)\setminus S]$.
% For a vertex set $S$, $\mathdefin{G[S]}$ denotes the subgraph of $G$ induced by the vertices in $S\cap V(G)$, and $\mathdefin{G-S}\coloneqq G[V(G)\setminus S]$.
For a set $Z\subseteq \binom{V(G)}{2}$, $\mathdefin{G\cup Z}$ is the graph with vertex set $V(G)$ and edge set $E(G)\cup Z$; and $\mathdefin{G-Z}$ is the graph with vertex set $V(G)$ and edge set $E(G)\setminus Z$.
% For a set $Z$ of edges of $G$, $\mathdefin{G-Z}$ denotes the subgraph of $G$ obtained by removing the edges in $Z$ from $G$. For a set $Z\subseteq \binom{V(G)}{2}$, $\mathdefin{G\cup Z}$ is the graph with vertex set $V(G)$ and edge set $E(G)\cup Z$.
For two subsets $A$ and $B$ of $V(G)$, we say that an edge $uv$ of $G$ with $u\in A$ and $v\in B$ is \defin{between} $A$ and $B$. For subgraphs $H_1$ and $H_2$ of $G$, $\mathdefin{H_1\cup H_2}$ is the graph with vertex set $V(H_1)\cup V(H_2)$ and edge set $E(H_1)\cup E(H_2)$. For a collection $\mathcal{H}$ of subgraphs of $G$, \defin{$\bigcup\mathcal{H}$} is the graph with vertex set $\bigcup_{H\in \mathcal{H}} V(H)$ and edge set $\bigcup_{H\in \mathcal{H}} E(H)$.

%Let $k$ be a nonnegative integer.
A \defin{path} is a graph 
with the vertex set $\set{v_0,\ldots,v_k}$ and the 
edges $\set{v_{i-1}v_{i} : i \in [k]}$, where $k\geq 0$ and $v_0,\dots,v_k$ are pairwise distinct. 
We often refer to a path by a natural sequence of its vertices, 
writing, say, $v_0\cdots v_k$ or equivalently $v_k \cdots v_0$.
Writing a path as $v_0\cdots v_k$ fixes an underlying orientation of the path, where $v_0$ is the first vertex and $v_k$ is the last vertex.
We say that $v_0$ and $v_k$ are the \defin{endpoints} of $v_0 \cdots v_k$.
We also say that $v_0 \cdots v_k$ \defin{starts} at $v_0$ and \defin{ends} at $v_k$. 
Given a path $P$ and two vertices $v, w$ of $P$, we denote by $\mathdefin{vPw}$ the subpath of $P$ from $v$ to $w$. 

Suppose $A$ and $B$ are vertices or sets of vertices in a graph $G$. 
A path $P$ with $P\subseteq G$ is an \defin{$(A,B)$-path} in $G$ if $P$ starts at (a vertex in) $A$ and ends at (a vertex in) $B$, and the inner vertices of $P$ are disjoint from $A\cup B$. 

A \defin{cycle} is a graph with at least three vertices such that deleting any one edge results in a path. 
A \defin{walk} in a graph $G$ is a sequence $v_0 v_1 \cdots v_k$ of vertices of $G$, with $k\geq 0$, such that $v_{i-1}v_{i} \in E(G)$ for every $i\in [k]$. 
Note that $v_0,\dots,v_k$ are not necessarily distinct.
Often we speak of a walk $v_0 v_1 \cdots v_k$ as a graph, in such cases we are referring to the graph with vertex set $\set{v_0, \dots, v_k}$ and edge set $\set{v_{i-1}v_i:i\in [k]}$.

For walks, paths or edges $P=v_0 v_1\cdots v_s$ and $Q=v_s v_{s+1}\cdots v_t$, with $0\leq s\leq t$, let $\mathdefin{P\cup Q}$ be the walk (or path if it is one) whose underlying sequence of vertices is $v_0 v_1\cdots v_t$.

A \defin{forest} is a graph with no cycles.
A \defin{tree} is a connected forest. 

A \defin{tree-decomposition} of a graph $G$ is a function $\beta$ from the vertex set of some tree $T$ to the power set of $V(G)$, such that for every vertex $v\in V(G)$, the set $\set{t\in V(T) : v\in \beta(t)}$ induces a non-null tree in $T$; and for every edge $uv\in E(G)$, there exists $t\in V(T)$ with $\set{u,v}\subseteq\beta(t)$. The \defin{width} of $\beta$ is $\max_{t\in V(T)}|\beta(t)|-1$. The \defin{treewidth} of $G$, denoted by \defin{$\tw(G)$}, is the minimum width of a tree-decomposition of $G$.

The length, $\mathdefin{\len(P)}$, of a path $P$  
is the number of edges in $P$. 
Similarly, the length of a cycle $C$ is the number of edges in $C$ and is denoted $\mathdefin{\len(C)}$. 
The \defin{distance} between two vertices $x$ and $y$ of $G$, denoted by $\mathdefin{\dist_G(x,y)}$, is 
minimum length of an $(x,y)$-path in $G$ if there is one, 
or \defin{$\infty$} if there is none. For non-empty subsets $X$ and $Y$ of $V(G)$, define $\mathdefin{\dist_G(X,Y)}\coloneqq \min\set{\dist_G(x,y):(x,y)\in X\times Y}$. For an integer $r\ge 0$ and a vertex $x$ of $G$, the \defin{ball of radius $r$ around $x$} is $\mathdefin{B_G(x,r)}\coloneqq \set{y\in V(G):\dist_G(x,y)\le r}$.  For subsets $X$ of $V(G)$, let $\mathdefin{B_G(X,r)}\coloneqq \bigcup_{x\in X}B_G(x,r)$.
% Nice! B_G(\emptyset, r)=\emptyset.

Let $G$ be a graph and let $d$ be a nonnegative integer. A collection $\mathscr{C}$ of subgraphs of $G$ is a \defin{$d$-packing} if $\dist_G(V(A),V(B))> d$ for every pair of distinct $A,B\in\mathscr{C}$.

Let $G$ be a graph and let $H$ be a non-null subgraph of $G$ that contains at least one vertex from each connected component of $G$. An \defin{$H$-supported BFS-spanning subgraph} of $G$ is an edge-minimal spanning subgraph $U$ of $G$ such that $\dist_U(V(H), v)=\dist_G(V(H), v)$ for every vertex $v\in V(G)$. Observe that $U$ is a spanning forest in $G$ and each component of $U$ contains exactly one vertex of $H$, which we call the \defin{root} of the component. For each $v\in V(U)$ the \defin{$U$-projection} of $v$, denoted by \defin{$\proj_U(v)$}, is the root of the component of $U$ that contains $v$; and \defin{$U\angle{v}$} is the unique path in $U$ between $v$ and $\proj_U(v)$. The \defin{$(H, U)$-span} of an edge $vw$ in $G$ and the \defin{$(H,U)$-span} of $G$ are defined as follows:
\[\mathdefin{\spn(G, H, U, vw)} \coloneqq  \dist_{H}(\proj_U(v), \proj_U(w)),\]
\[\mathdefin{\spn(G, H, U)} \coloneqq  \max(\set{\spn(G, H, U, vw) : vw\in E(G)} \setminus \set{\infty}).\]
The \defin{$H$-span} of $G$ is the number $\max_U \spn(G,H,U)$, where the maximum is taken over all $H$-supported BFS-spanning subgraphs of $G$.

\begin{lem}\label{lem:medium-cycle-or-small-span}
Let $d\geqslant 1$ and $\ell\geqslant 3$ be integers. Let $G$ be a graph and let $C$ be a cycle in $G$ of length at least $\ell$. There exists a cycle $C'$ in $G$ of length at least $\ell$ such that either
\begin{enumeratealph}
    \item $V(C') \subseteq B_G(V(C), 3d)$ and the length of $C'$ is at most $6d+2$, or\label{item:medium:medium-cycle-or-small-span}
    \item $V(C')\subseteq B_G(V(C), 2d)$ and the $C'$-span of $G[B_G(V(C'), d)]$ is at most $\ell$.\label{item:small-span:medium-cycle-or-small-span}
\end{enumeratealph}
\end{lem}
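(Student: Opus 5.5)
The plan is to pick $C'$ by a minimality argument and to turn a large span into a short ``chord'' of $C'$. Let $\mathcal{C}$ be the set of cycles of length at least $\ell$ with vertex set contained in $B_G(V(C),2d)$. This set contains $C$, so it is nonempty. Choose $C'\in\mathcal{C}$ of minimum length. If $\len(C')\le 6d+2$, then outcome \ref{item:medium:medium-cycle-or-small-span} holds, since $B_G(V(C),2d)\subseteq B_G(V(C),3d)$. So assume $\len(C')>6d+2$, and suppose that \ref{item:small-span:medium-cycle-or-small-span} fails. Put $G':=G[B_G(V(C'),d)]$. Then there is a $C'$-supported BFS-spanning subgraph $U$ of $G'$ and an edge $vw\in E(G')$ such that $\dist_{C'}(\proj_U(v),\proj_U(w))>\ell$. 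Note that this distance is finite because $C'$ is connected.

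\emph{Building the chord.} Since $\proj_U(v)\ne\proj_U(w)$, the vertices $v$ and $w$ lie in different components of the forest $U$. Hence the paths $U\angle{v}$ and $U\angle{w}$ are vertex-disjoint. Each of them meets $C'$ only at its root, because a BFS path reaches $V(C')$ only at its last vertex. Each has length at most $d$. Therefore $P:=U\angle{v}\cup vw\cup U\angle{w}$ is a $(V(C'),V(C'))$-path of length at most $2d+1$. Its endpoints are at distance greater than $\ell$ along $C'$. Let $Q$ be the shorter of the two arcs of $C'$ between these endpoints. Then $C'':=Q\cup P$ is a cycle with
\[
\ell<\len(Q)\le\len(C'')\le \len(C')/2+2d+1<\len(C'),
\]
where the last inequality uses $\len(C')>4d+2$. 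Moreover $V(C'')\subseteq B_G(V(C'),d)\subseteq B_G(V(C),3d)$.

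\emph{Getting a contradiction or outcome \ref{item:medium:medium-cycle-or-small-span}.} If $V(C'')\subseteq B_G(V(C),2d)$, then $C''\in\mathcal{C}$ is shorter than $C'$, contradicting the choice of $C'$. Otherwise some vertex of $P$ is at distance more than $2d$ from $C$. This is the step I expect to be the main obstacle. The issue is that minimality inside $B_G(V(C),2d)$ does not directly control detours that leave this ball.

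The fix I would try first is to change the choice of $C'$. Let $C'$ be a shortest cycle of length at least $\ell$ whose vertices lie within distance $d$ of $V(C)$. Then every shortcut $C''$ lies in $B_G(V(C),2d)$, which is the containment required by \ref{item:small-span:medium-cycle-or-small-span}, and the chord $P$ stays within $3d$ of $C$. Now the problematic case has to be handled by length. If the shortcut leaves $B_G(V(C),d)$, then $P$ contains a vertex at distance more than $d$ from $C$. I would then use the distance structure to bound the length of some long cycle, aiming for at most $2\cdot(2d+1)+2d\le 6d+2$. Concretely, the idea is to connect the two ends of $P$ back to $C$ by geodesics of length at most $d$ each, and to close this up with $P$ and a suitable part of $C$ or $C'$. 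This would give a cycle of length at least $\ell$ and at most $6d+2$ inside $B_G(V(C),3d)$, which is outcome \ref{item:medium:medium-cycle-or-small-span}.

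Balancing these two radii is the delicate part. Keeping length at least $\ell$ while bounding length by $6d+2$ requires exploiting the fact that the endpoints of $P$ are more than $\ell$ apart on $C'$. I expect that verifying this bookkeeping, and choosing which cycle to minimize over, is where the real work lies.
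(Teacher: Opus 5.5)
Your chord construction is fine. The gap is the step you flag yourself, and it is not a matter of bookkeeping. Choosing $C'$ of minimum total length, whether inside $B_G(V(C),2d)$ or inside $B_G(V(C),d)$, is the wrong choice: such a $C'$ can violate (b) in a graph where (a) is impossible.

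Take $d=1$, $\ell=100$ and $C=c_0c_1\cdots c_{999}c_0$. Add $z_1$ adjacent to $c_0,c_{10}$, add $z_2$ adjacent to $c_{500},c_{510}$, and add $x$ adjacent to $z_1,z_2$. Then $B_G(V(C),1)=V(C)\cup\{z_1,z_2\}$. Your revised $C'$ is the unique shortest cycle of length at least $\ell$ in this set, namely the one of length $984$ using both detours through $z_1$ and $z_2$. Now $x\in B_G(V(C'),1)$. Whichever of $xz_1,xz_2$ a $C'$-supported BFS-spanning subgraph keeps, the other edge has span $\dist_{C'}(z_1,z_2)=492>\ell$.

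Since $\ell>6d+2$, no cycle satisfies (a). So the plan ``if $C'$ fails (b), produce a cycle for (a)'' cannot succeed. The lemma holds here only through a different cycle, such as $c_{10}c_{11}\cdots c_{500}z_2xz_1c_{10}$.

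Replacing the detours by paths $c_0y_1z_1y_1'c_{20}$ and $c_{500}y_2z_2y_2'c_{520}$ defeats your first choice (the shortest long cycle in $B_G(V(C),2d)$) in the same way, with $x$ now at distance $3$ from $C$. In both cases your shortcut $C''$ drifts away from $C$, and iterating it gives no control on the distance to $C$.

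What you need is to anchor every candidate to $C$ and to minimize only the part lying on $C$. The paper considers pairs $(Q,P)$ where $Q$ is a subpath of $C$, $P$ is a path of length at most $4d+1$, and $Q\cup P$ is a cycle of length at least $\ell$. Such cycles automatically lie in $B_G(V(C),2d)$. Among them, pick one minimizing $\len(Q)$.

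Suppose $D=Q\cup P$ fails (b). Then the chord $R$ (your $P$, of length at most $2d+1$) has endpoints $x,y$ with $\dist_D(x,y)>\ell$.
\begin{itemize}
\item If both endpoints lie on $P$, then $R$ together with the subpath of $P$ between them is a cycle. Its length is more than $\ell$ and at most $(2d+1)+(4d+1)=6d+2$, and it lies inside $B_G(V(C),3d)$. This is (a).
\item Otherwise one endpoint is an interior vertex of $Q$. Of the two cycles of $D\cup R$ through $R$, one uses at most $\lfloor\len(P)/2\rfloor\le 2d$ edges of $P$. That cycle has length more than $\ell$, its detour has length at most $4d+1$, and its part on $C$ is strictly shorter than $Q$. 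This contradicts the minimality of $\len(Q)$.
\end{itemize}
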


\begin{proof}
We say that a pair $(Q,P)$ is \defin{good} if $Q$ is a non-null path in $C$, $P$ is a path in $G$ of length at most $4d+1$, and $Q\cup P$ is a cycle of length at least $\ell$. Observe that, for any edge $e$ of $C$, $(C-e,e)$ is a good pair. Among all good pairs, let $(Q,P)$ be one that minimises $\len(Q)$. 

Let $D=Q\cup P$ and let $G_D\coloneqq G[B_G(V(D),d)]$.  
Since $Q\subseteq C$ and $\len(P)\leq 4d+1$, we have that 
$V(D)\subseteq B_G(V(C),2d)$. 
Thus, if $\spn(G_D,D,U)\le\ell$ for every $D$-supported BFS-spanning subgraph $U$ of $G_D$, then \ref{item:small-span:medium-cycle-or-small-span} holds for $C'=D$ and there is nothing to prove.

Therefore, we assume there is a $D$-supported BFS-spanning subgraph $U$ of $G_D$ and an edge $vw$ of $G_D$ with $\infty > \spn(G_D,D,U,vw)>\ell$. Let $R\coloneqq U\angle{v}\cup vw\cup U\angle{w}$. Since $\spn(G_D, D, U, vw)>\ell\geq 3$, $vw\not\in E(D)$ and $V(U\angle{v})\cap V(U\angle{w})=\emptyset$, thus $R$ is a path whose edges and inner vertices do not appear in $D$. Also observe that $\len(R) = \len(U\angle{v})+1+\len(U\angle{w})\leq 2d+1$. Now let $x$ and $y$ be the endpoints of $R$. The argument splits into cases depending on the locations of $x$ and $y$ in $D=Q\cup P$.

If $x\in V(P)$ and $y \in V(P)$, then consider the unique cycle $C'$ in $R\cup P$. 
Since $\dist_{D}(x,y)>\ell$, we know that 
$\len(C')\geq \ell$.
On the other hand, $\len(C') \leq \len(R)+\len(P) \leq 2d+1 + 4d+1 = 6d+2$. 
Finally $V(R) \subseteq B_G(V(P),d)$ and $V(P) \subseteq B_G(V(C),2d)$ which implies $V(C') \subseteq B_G(V(C),3d)$. 
This proves that \ref{item:medium:medium-cycle-or-small-span} holds.

Otherwise, we may assume $x\in V(Q)\setminus V(P)$ without loss of generality. 
Then, the graph $D\cup R$ has two cycles that contain $R$. 
One of these cycles, $D'$, contains at most $\lfloor\tfrac{1}{2}\len(P)\rfloor\le 2d$ edges of $P$. 
Note also that $\len(D')\geq \dist_{D}(x,y)>\ell$.  
Let $Q'\coloneqq D'\cap Q$ and $P'\coloneqq D'\cap (P\cup R)$. 
Note that $x$ lies in $Q'$ so $Q'$ is non-null. 
Also, observe that $x$ has two neighbors in $Q$ (since $x \notin V(P)$) and  $D'$ contains only one of these two. 
Hence, $\len(Q')<\len(Q)$. 
Finally, 
$\len(P')= \len(P\cap D')+\len(R)\le 2d+2d+1=4d+1$. 
Altogether, $(Q',P')$ is a good pair and $\len(Q')<\len(Q)$, which contradicts the choice of $(Q,P)$.

This concludes the proof of the lemma.
\end{proof}

\begin{lem}
\label{lem:pre-def-of-pad}
For every cycle $C$, for every $y\in V(C)$, and for every pair of positive integers $(\alpha,\beta)$, there exists a set $Z \subseteq V(C)$ such that
\begin{enumerate}
\item $y\in Z$;\label{item:y-in-Z:pre-def-of-pad}
\item $|Z|\leq 2\beta+1$;\label{item:size:pre-def-of-pad}
\item $B_C(Z,\floor{\alpha/2})$ is connected and has size at least $\min \set{\len(C),2(\alpha\beta+\floor{\alpha/2})+1}$;\label{item:connected:pre-def-of-pad}
\item For each vertex $v$ in $C$, if $\dist_C(v,y)\leq \alpha\beta$ then $\dist_C(v,Z)\leq \alpha/2$. \label{item:padding-property:pre-def-of-pad}
\end{enumerate}
\end{lem}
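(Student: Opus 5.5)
Write $n\coloneqq\len(C)$ and label the vertices of $C$ as $c_i$ for $i\in\mathbb{Z}_n$, with $y=c_0$. The plan is to place $Z$ at evenly spaced positions around $y$, namely
\[
Z\coloneqq\set{c_{j\alpha} : -\beta\le j\le \beta},
\]
with indices taken modulo $n$. Since the same vertex may appear more than once, $|Z|\le 2\beta+1$ and $y=c_0\in Z$. This gives \ref{item:y-in-Z:pre-def-of-pad} and \ref{item:size:pre-def-of-pad} immediately.

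For \ref{item:padding-property:pre-def-of-pad}, take $v$ with $\dist_C(v,y)\le\alpha\beta$. Then $v=c_i$ for some integer $i$ with $|i|\le\alpha\beta$, and $c_i$ is reached from $y$ along one of the two arcs. The integers $j\alpha$ with $|j|\le\beta$ cover the real interval $[-\alpha\beta,\alpha\beta]$ with consecutive gaps of size $\alpha$. Hence there is such a $j$ with $|i-j\alpha|\le\alpha/2$. This gives $\dist_C(c_i,c_{j\alpha})\le |i-j\alpha|\le\alpha/2$.

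For \ref{item:connected:pre-def-of-pad}, the idea is to view the construction on the integers before reducing modulo $n$. Set $r\coloneqq\floor{\alpha/2}$. The union of the integer intervals $[j\alpha-r,\,j\alpha+r]$ over $|j|\le\beta$ is the single integer interval $I\coloneqq[-\alpha\beta-r,\,\alpha\beta+r]$. It is a single interval because consecutive centres are $\alpha$ apart and $2r+1\ge\alpha$. Every $c_m$ with $m\in I$ lies in $B_C(Z,r)$. Conversely, every vertex of $B_C(Z,r)$ is $c_m$ for some $m$ within distance $r$ of some $j\alpha$, and hence $m\in I$. So $B_C(Z,r)$ is exactly the image of $I$ under $m\mapsto c_m$. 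The image of a run of consecutive integers is a subpath of $C$ (or all of $C$), so $B_C(Z,r)$ is connected. Its size is $\min\set{n,|I|}=\min\set{n,\,2(\alpha\beta+r)+1}$.

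No step here is a real obstacle. The only place needing care is the wrap-around when $n$ is small compared with $\alpha\beta$. Working with integer indices and projecting modulo $n$ at the very end handles this uniformly, both for connectivity and for the size count. It also covers the fact that distances in $C$ are at most the index differences, which is all that \ref{item:padding-property:pre-def-of-pad} uses.
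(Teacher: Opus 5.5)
Your proof is correct and follows the paper's own proof: both take the same set $Z$, consisting of $y$ together with the vertices $\alpha, 2\alpha, \dots, \alpha\beta$ steps from $y$ in each direction around $C$, and both verify the four properties by the same covering argument. Your lift to integer indices modulo $\len(C)$ makes the wrap-around case explicit, and it even gives equality in the size bound. The paper handles wrap-around more tersely, by noting that the walk $v_{\alpha\beta+\lfloor\alpha/2\rfloor}\cdots y\cdots u_{\alpha\beta+\lfloor\alpha/2\rfloor}$ is a path whenever the ball is not all of $C$.
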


\begin{proof}
Fix an orientation of $C$. Let $v_0 v_1 \cdots  v_{\alpha\beta}$ be the walk in $C$ such that $v_0=y$, and $v_{i+1}$ is the successor of $v_i$ in the clockwise cyclic ordering of $C$ for all $i$. Similarly, let $u_0 u_1 \cdots u_{\alpha\beta}$ be the walk in $C$ such that $u_0=y$, and $u_{i+1}$ is the successor of $u_i$ in the anticlockwise cyclic ordering of $C$ for all $i$. Let $Z\coloneqq \set{v_\alpha, v_{2\alpha}, \dots, v_{\alpha\beta}}\cup \set{y}\cup \set{u_\alpha, u_{2\alpha}, \dots, u_{\alpha\beta}}$. We show that $Z$ satisfies the outcome of the lemma. Observe that \ref{item:y-in-Z:pre-def-of-pad} and \ref{item:size:pre-def-of-pad} are immediate. For any pair of consecutive vertices in the sequence $v_{\beta\alpha}, v_{(\beta-1)\alpha}, \dots, v_{\alpha}, y, u_{\alpha}, u_{2\alpha}, \dots, u_{\beta\alpha}$, the union of their radius $\lfloor \alpha/2 \rfloor$ balls is connected, thus $B_C(Z,\floor{\alpha/2})$ is connected. If $|B_C(Z,\floor{\alpha/2})|<\len(C)$, then $v_{\alpha\beta+\floor{\alpha/2}} \cdots v_2 v_1 y u_1 u_2 \cdots u_{\alpha\beta+\floor{\alpha/2}}$ defines a path in $C$, which implies $|B_C(Z,\floor{\alpha/2})| \geq 2(\alpha\beta+\floor{\alpha/2})+1$. Hence \ref{item:connected:pre-def-of-pad} holds. Finally, \ref{item:padding-property:pre-def-of-pad} follows from the fact that every vertex in $\set{v_1, v_2,\dots, v_{\alpha\beta}}\cup \set{y} \cup \set{u_1, u_2,\dots, u_{\alpha\beta}}$ has distance at most $\alpha/2$ from some vertex in $Z$.
\end{proof}

For cycles $C$, vertices $y\in V(C)$, and positive integers $\alpha,\beta$, let $\mathdefin{\pad(C,y,\alpha,\beta)}$ denote the corresponding set $Z$ constructed in \cref{lem:pre-def-of-pad}.

Let $H$ be a disjoint union of cycles $C_1\cup \cdots \cup C_p$, let $Y\subseteq V(H)$, and let $(\alpha,\beta)$ be a pair of positive integers. Define
\[\mathdefin{\pad(H,Y,\alpha,\beta)} \coloneqq  \bigcup_{i\in [p]}\;\; \bigcup_{y\in Y\cap V(C_i)} \pad(C_i,y,\alpha,\beta).\]

The outcomes of \cref{lem:pre-def-of-pad} generalise to $\pad(H,Y,\alpha,\beta)$ in the following way:

\begin{cor}\label{cor:def-of-pad}
If $H$ is a disjoint union of cycles, $Y\subseteq V(H)$, and $(\alpha,\beta)$ is a pair of positive integers, then
\begin{enumerate}
\item $Y\subseteq \pad(H,Y,\alpha,\beta)\subseteq V(H)$;\label{item:Y-in-pad:def-of-pad}
\item $|\pad(H,Y,\alpha,\beta)|\leq |Y|(2\beta+1)$;\label{item:size:def-of-pad}
\item for each component $C$ of $H$, every component of $C[V(C)\cap B_H(\pad(H,Y,\alpha,\beta),\lfloor\alpha/2\rfloor)]$ has at least $\min \set{\len(C),2(\alpha\beta+\lfloor\alpha/2\rfloor)+1}$ vertices;\label{item:connected:def-of-pad}
\item  For each vertex $v$ in $H$, if $\dist_H(v,Y)\leq \alpha\beta$ then $\dist_H(v,\pad(H,Y,\alpha,\beta))\leq \alpha/2$. \label{item:padding-property:def-of-pad}
\end{enumerate}
\end{cor}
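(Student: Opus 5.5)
The plan is to reduce each item to \cref{lem:pre-def-of-pad} applied componentwise, using two facts. First, distances in $H$ between vertices of the same component $C_i$ equal distances in $C_i$. Second, vertices in different components are at distance $\infty$. Write $Z\coloneqq\pad(H,Y,\alpha,\beta)$ and $Z_{i,y}\coloneqq\pad(C_i,y,\alpha,\beta)$ for $y\in Y\cap V(C_i)$, so that $Z=\bigcup_i\bigcup_{y} Z_{i,y}$ and $Z\cap V(C_i)=\bigcup_{y\in Y\cap V(C_i)}Z_{i,y}$.

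Items \ref{item:Y-in-pad:def-of-pad} and \ref{item:size:def-of-pad} are immediate.
\begin{itemize}
\item Each $y\in Y$ lies in exactly one $C_i$, and $y\in Z_{i,y}\subseteq V(C_i)$ by \ref{item:y-in-Z:pre-def-of-pad}.
\item By \ref{item:size:pre-def-of-pad}, $|Z|\le\sum_{y\in Y}|Z_{i,y}|\le |Y|(2\beta+1)$.
\end{itemize}

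For item \ref{item:padding-property:def-of-pad}, take $v$ with $\dist_H(v,Y)\le\alpha\beta<\infty$. Then some $y\in Y$ lies in the same component $C_i$ as $v$ and satisfies $\dist_{C_i}(v,y)\le\alpha\beta$. By \ref{item:padding-property:pre-def-of-pad}, $\dist_{C_i}(v,Z_{i,y})\le\alpha/2$. Since $Z_{i,y}\subseteq Z$, this gives $\dist_H(v,Z)\le\alpha/2$.

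Item \ref{item:connected:def-of-pad} is the only step needing care, and I expect it to be the main obstacle. Fix a component $C=C_i$ and set $r=\lfloor\alpha/2\rfloor$. Then
\[V(C)\cap B_H(Z,r)=B_C(Z\cap V(C),r)=\bigcup_{y\in Y\cap V(C)}B_C(Z_{i,y},r).\]
Consider any component $K$ of $C[B_C(Z\cap V(C),r)]$. Being non-null, $K$ contains some vertex of this union, hence a vertex of some $B_C(Z_{i,y},r)$. By \ref{item:connected:pre-def-of-pad}, $B_C(Z_{i,y},r)$ induces a connected subgraph of $C$; that is what "connected" should mean here, since balls on a cycle are arcs. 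Being connected and contained in the vertex set, it lies entirely inside $K$. Therefore
\[|V(K)|\ge |B_C(Z_{i,y},r)|\ge\min\set{\len(C),2(\alpha\beta+r)+1},\]
again by \ref{item:connected:pre-def-of-pad}. The subtle point is simply that the union of the connected pieces may merge them, which can only enlarge components and never shrink them, so the lower bound survives. Note also that a component $C$ with $Y\cap V(C)=\emptyset$ contributes no vertices, and the statement is then vacuous.
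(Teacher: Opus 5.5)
Your proposal is correct and follows the paper's proof. Like the paper, you apply \cref{lem:pre-def-of-pad} to each component and use the identity $V(C)\cap B_H(\pad(H,Y,\alpha,\beta),\lfloor\alpha/2\rfloor)=\bigcup_{y\in Y\cap V(C)}B_C(\pad(C,y,\alpha,\beta),\lfloor\alpha/2\rfloor)$ for item (iii), together with the same-component argument for item (iv). Your point that each connected piece lies inside a single component, so merging pieces only enlarges components, spells out a step the paper leaves implicit.
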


\begin{proof}
\ref{item:Y-in-pad:def-of-pad} and \ref{item:size:def-of-pad} follow immediately from \cref{lem:pre-def-of-pad}. Observe that for each component $C$ of $H$, $V(C)\cap B_H(\pad(H,Y,\alpha,\beta), \lfloor\alpha/2\rfloor)=\bigcup_{y\in Y\cap V(C)}B_C(\pad(C, y, \alpha,\beta), \lfloor\alpha/2\rfloor)$, hence \ref{item:connected:def-of-pad} holds by \cref{lem:pre-def-of-pad}. To see that \ref{item:padding-property:def-of-pad} holds, consider any $v\in V(H)$ with $\dist_H(v, Y)\leq \alpha\beta$. Let $C$ be the component of $H$ that contains $v$, then there exists $y\in Y\cap V(C)$ such that $\dist_C(v,y)\leq \alpha\beta$. Then \cref{lem:pre-def-of-pad} implies that $\dist_C(v,\pad(C,y,\alpha,\beta))\leq \alpha/2$, therefore $\dist_H(v,\pad(H,Y,\alpha,\beta))\leq \alpha/2$. 
\end{proof}

\begin{lem}\label{lem:big-span-and-intersecting-balls}
Let $k,d,r,R,\ell$ be positive integers with $R\geq r > d$ and $\ell\geqslant 3$. Let $G$ be a graph, let $\mathscr{C}$ be a non-empty $2d$-packing of cycles in $G$ each of length at least $\ell$, let $U$ be a $\bigcup\mathscr{C}$-supported BFS-spanning subgraph of $G_R\coloneqq G[B_G(V(\bigcup\mathscr{C}), R)]$, and let $W$ be the set of endpoints of all edges $e\in E(G_R)$ with $\spn(G_R,\bigcup\mathscr{C}, U, e)>\ell$. If the $\bigcup\mathscr{C}$-span of $G_d\coloneqq G[B_G(V(\bigcup\mathscr{C}), d)]$ is at most $\ell$, then either 
\begin{enumeratealph}
    \item $G$ contains a $d$-packing of $k$ cycles each of length at least $\ell$, or\label{item:packing:big-span-and-intersecting-balls}
    \item \label{item:alternative:big-span-and-intersecting-balls} there exists $X\subseteq V(\bigcup\mathscr{C})$ with $|X|\leq (s(k)-1)(2\ceil{\ell/2}+1)$ such that
    \[W\cup \bigcup_{\set{C,D}\in \binom{\mathscr{C}}{2}} \left(B_G(V(C), r)\cap B_G(V(D), r) \right)\subseteq B_G(W, r-d-1) \subseteq B_G(X, R+r+d).\]
\end{enumeratealph}
\end{lem}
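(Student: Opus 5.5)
The plan is to handle the two inclusions in \ref{item:alternative:big-span-and-intersecting-balls} separately. The first inclusion is unconditional. The second comes from a packing-versus-covering dichotomy on the $U$-projections of $W$, decided by \cref{thm:simonovits}, with \cref{cor:def-of-pad} supplying the set $X$.

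\emph{Step 1: the first inclusion.} Write $H\coloneqq\bigcup\mathscr{C}$. Trivially $W\subseteq B_G(W,r-d-1)$, since $r>d$. The key observation is that every vertex $u$ with $\dist_G(u,V(C))\le d$ for some $C\in\mathscr{C}$ has $\proj_U(u)\in V(C)$. Indeed, $\mathscr{C}$ is a $2d$-packing, so every other cycle is at distance $>d$ from $u$, and $U$ preserves distances to $V(H)$.

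Now let $z\in B_G(V(C),r)\cap B_G(V(D),r)$ with $C\ne D$. Take shortest paths $P_1$ from $z$ to $C$ and $P_2$ from $z$ to $D$. Both have length at most $r\le R$, so they lie in $G_R$. Suppose $\proj_U(z)\notin V(C)$; otherwise use $P_2$ and $D$. The vertex of $P_1$ at position $i$ from $z$ is within $r-i$ of $C$, so it projects into $C$ once $i\ge r-d$. Hence some edge of $P_1$ with endpoints at positions $i$ and $i+1\le r-d$ has endpoints projecting to different cycles, or to the same cycle. In the first case its span is $\infty>\ell$. In the second case I still need to argue that the span exceeds $\ell$, and I would do this by choosing the edge where the projection first leaves $\proj_U(z)$'s cycle, which forces different cycles. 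Either way, one endpoint lies in $W$ at distance at most $r-d-1$ from $z$.

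\emph{Step 2: reduce the second inclusion to covering projections.} It suffices to show that every $w\in W$ satisfies $\dist_G(w,X)\le R+d+1$, since then $B_G(W,r-d-1)\subseteq B_G(X,R+r)$. Since $\dist_G(w,\proj_U(w))\le R$, it is enough to find $Y\subseteq V(H)$ with $|Y|\le s(k)-1$ such that every projection $\proj_U(w)$, $w\in W$, is within $H$-distance $\alpha\beta$ of $Y$. Here I take $\beta\coloneqq\ceil{\ell/2}$ and $\alpha\coloneqq 2d+2$. Setting $X\coloneqq\pad(H,Y,\alpha,\beta)$, \cref{cor:def-of-pad}\ref{item:padding-property:def-of-pad} gives $\dist_H(\proj_U(w),X)\le d+1$, and \cref{cor:def-of-pad}\ref{item:size:def-of-pad} gives $|X|\le(s(k)-1)(2\ceil{\ell/2}+1)$, exactly the required bound.

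To find $Y$, choose greedily a maximal family of bad edges $e_1,\dots,e_m$ whose endpoint projections are pairwise at $H$-distance more than, say, $2\alpha\beta$. Let $Y$ be the set of these projections. If $|Y|\le s(k)-1$, then maximality gives the covering property, up to adjusting constants.

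\emph{Step 3: the packing outcome (main obstacle).} If $|Y|\ge s(k)$, I build an auxiliary graph from $H$. Each chosen edge $e_j=v_jw_j$ becomes a chord between $\proj_U(v_j)$ and $\proj_U(w_j)$, realised in $G$ by the path $U\angle{v_j}\cup v_jw_j\cup U\angle{w_j}$. After suppressing degree-$2$ vertices, this graph has maximum degree $3$ and at least $s(k)$ vertices of degree $3$, so \cref{thm:simonovits} yields $k$ disjoint cycles. Each such cycle either runs through a chord, whose endpoints are at $H$-distance $>\ell$, or is a whole cycle of $\mathscr{C}$, so its lift to $G$ has length at least $\ell$.

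The hard part is showing the lifts form a $d$-packing. Here I would use that the $H$-span of $G_d$ is at most $\ell$. Any short connection between two lifted cycles either passes within distance $d$ of $H$, which lets us project it and compare positions on $H$, or joins two BFS-branches far from $H$. The wide spacing $2\alpha\beta$ of the chord endpoints, together with span $\le\ell$ in $G_d$, should rule out both. I expect this distance verification, and possibly rerouting the lifted cycles through the BFS-trees to keep them apart, to be the most delicate part.
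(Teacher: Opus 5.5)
\textbf{Gap in Steps 2--3.} Your Step 1 is correct and is essentially the paper's argument for the pairwise intersections. The skeleton of Steps 2--3 is also the right one: chords $P_e=U\angle{u}\cup uv\cup U\angle{v}$, Simonovits in one case, padding in the other. The gap is the selection rule. Requiring only that the projections of the chosen bad edges be pairwise far apart in $\bigcup\mathscr{C}$ does not keep the chords far apart in $G$. Two chords can nearly meet at depth close to $R$, where the span hypothesis on $G_d$ says nothing.

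No distance verification or rerouting in Step 3 can repair this, because your dichotomy is false. Here is a counterexample.
\begin{itemize}
\item Let $\mathscr{C}=\{C\}$ for a long cycle $C$, and add a clique on $z_1,\dots,z_n$.
\item For each $i$, add an edge $z_iy_i$ and internally disjoint paths of length $R$ from $z_i$ to $a_i\in V(C)$ and from $y_i$ to $b_i\in V(C)$, with all $a_i,b_i$ pairwise far apart on $C$.
\item The lemma's hypotheses hold, $\proj_U(z_i)=a_i$ and $\proj_U(y_i)=b_i$, and the bad edges are exactly the $z_iy_i$ and the $z_iz_j$.
\item Every maximal family in your sense contains an edge at each $z_i$, since otherwise $z_iy_i$ could be added. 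Hence $|Y|\ge n\ge s(k)$.
\item Yet every cycle other than $C$ meets the clique, so $G$ has no $d$-packing of even three cycles.
\end{itemize}

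\textbf{The missing idea.} Put $G$-distance into the conflict relation, as the paper does. Call bad edges $e,f$ adjacent if $\dist_{\bigcup\mathscr{C}}(\proj_U(e),\proj_U(f))\le d\ell$ or $\dist_G(V(P_e),V(P_f))\le d$, and take a maximal independent set $I$. Then the chords are pairwise at distance greater than $d$. The span hypothesis is needed only when a short connection starts on a cycle of $\mathscr{C}$: such a connection stays in $G_d$, and its projection moves by at most $d\ell$, which the spacing handles.

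The price is that your Step 2 reduction (every projection of $W$ within $\bigcup\mathscr{C}$-distance $\alpha\beta$ of $Y$) fails for edges excluded because $P_e$ is close to some $P_f$ with $f\in I$. For those you must argue directly in $G$ via depths, meaning distances to $V(\bigcup\mathscr{C})$.
\begin{itemize}
\item Suppose $u\in V(P_e)$, $u'\in V(P_f)$, $\dist_G(u,u')\le d$, and $u'$ has depth $s$. Then $u$ has depth at least $s-d$.
\item So each endpoint of $e$ is joined to $\proj_U(u')\in Y$ by a walk of length at most $s+d+(R+1-(s-d))=R+2d+1$.
\item With $X=\pad(\bigcup\mathscr{C},Y,2d,\ceil{\ell/2})$ this gives $W\subseteq B_G(X,R+2d+1)$.
\item This is exactly what $B_G(W,r-d-1)\subseteq B_G(X,R+r+d)$ requires.
\end{itemize}
Also note that maximality places only one of the two projections of an excluded edge near $Y$. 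The other endpoint must be reached through $e$ itself, which costs an extra $+1$.
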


\begin{proof}
Let $\mathdefin{\mathcal{E}}\coloneqq \set{e\in E(G_R) : \spn(G_R, \bigcup\mathscr{C}, U, e)>\ell}$. 
(Thus, $W$ is the set of endpoints of edges in $\mathcal{E}$.)  
For each $e=uv\in \mathcal{E}$, let
\[\mathdefin{P_e}\coloneqq U\angle{u}\cup uv \cup U\angle{v}.\]
Since $\spn(G_R, \bigcup\mathscr{C}, U, uv)>\ell\geq 3$, $uv\not\in E(\bigcup\mathscr{C})$ and $V(U\angle{u})\cap V(U\angle{v})=\emptyset$, thus $P_e$ is a path whose edges and inner vertices do not appear in $\bigcup\mathscr{C}$.

For each edge $e=uv\in \mathcal{E}$, let $\mathdefin{\proj_U(e)} \coloneqq  \set{\proj_U(u), \proj_U(v)}$. Let \defin{$H$} be the auxiliary graph with vertex set $\mathcal{E}$, where two distinct elements $e,f\in V(H)$ are adjacent if and only if
\[\dist_G(V(P_e), V(P_f)) \leq d \quad \text{or}\quad \dist_{\bigcup\mathscr{C}}(\proj_U(e), \proj_U(f)) \leq d\ell.\]
Let \defin{$I$} be a maximal independent set in $H$ and consider the graph \[
\textstyle\mathdefin{G'}\coloneqq \bigcup \mathscr{C} \cup \bigcup_{e\in I}P_e.
\]
Recall that $\mathscr{C}$ is a collection of pairwise vertex-disjoint cycles.
Also since $\dist_G(V(P_e), V(P_f))>d$ for all distinct $e,f\in I$, $(P_e : e\in I)$ is a collection pairwise vertex-disjoint paths. Furthermore, no edge or inner vertex of $P_e$ appears in $\bigcup\mathscr{C}$ for all $e\in \mathcal{E}$. Therefore, every vertex $v\in V(G')$ satisfies $\deg_{G'}(v) \in \{2,3\}$, and the degree-$3$ vertices of $G'$ are precisely the endpoints of paths in $(P_e : e\in I)$. Hence $G'$ has exactly $2|I|$ degree-$3$ vertices.

The argument splits into the following two cases: \textbf{(1)} $2|I|\geqslant s(k)$, in which case it is shown that \ref{item:packing:big-span-and-intersecting-balls} holds, and \textbf{(2)} $2|I|\leq s(k)-1$, in which case it is shown that \ref{item:alternative:big-span-and-intersecting-balls} holds.

\textbf{Case (1)} $2|I|\geqslant s(k)$: A \defin{$\mathscr{C}$-segment} 
is a path $S$ in $\bigcup\mathscr{C}$ between an endpoint of $P_e$ and an endpoint of $P_f$ for some $e,f\in I$, such that the inner vertices of $S$ are degree-$2$.

\begin{clm}\label{clm:cycles-in-G'-are-long}
Every cycle in $G'$ has length at least $\ell$.
\end{clm}

\begin{clmproof}
Every cycle in $\mathscr{C}$ has length at least $\ell$ and 
every cycle in $G'$ that is not in $\mathscr{C}$ contains a $\mathscr{C}$-segment. 
Thus, it suffices to show that every $\mathscr{C}$-segment has length greater than $\ell$.
Let $S$ be a $\mathscr{C}$-segment whose endpoints $x$ and $y$ correspond with an endpoint $x$ of $P_e$ and an endpoint $y$ of $P_f$ for some $e,f\in I$. If $e=f$, then $\len(S) \geqslant \spn(G_R, \bigcup\mathscr{C}, U, e) > \ell$ since $e\in \mathcal{E}$. If $e\not=f$, then $\len(S) \geqslant \dist_{\bigcup\mathscr{C}}(x,y) \geqslant \dist_{\bigcup\mathscr{C}}(\proj_U(e), \proj_U(f)) > d\ell \geqslant \ell$.
\end{clmproof}

\begin{clm}\label{clm:disjoint-cycles-in-G'-are-far-apart}
For every pair of vertex-disjoint cycles $D$ and $D'$ in $G'$, $\dist_G(V(D), V(D')) > d$.
\end{clm}

\begin{clmproof}
Assume for a contradiction that there exists a pair of vertex-disjoint cycles $D$ and $D'$ in $G'$ such that $\dist_G(V(D), V(D'))\leq d$. Consider any $u\in V(D)$ and $u'\in V(D')$ with $\dist_G(u,u') \leq d$.

If $u\in V(P_e)$ and $u'\in V(P_f)$ for some $e,f\in I$ with $P_e\subseteq D$ and $P_f\subseteq D'$, then since $D$ and $D'$ are vertex-disjoint, $e\not=f$. Therefore $\dist_G(u,u') \geq \dist_G(V(P_e), V(P_f)) > d$, a contradiction. Hence it may be assumed that $u\not\in V(P_e)$ for every $e\in I$ with $P_e\subseteq D$. Consequently, there exists a unique $C\in \mathscr{C}$ such that $u\in V(C)$.

Let $P$ be a shortest $(u,u')$-path in $G$. Since $\dist_G(u,u')\leq d$, $\len(P)\leq d$, so $V(P)\subseteq B_G(V(C), d)$. In particular, since $\mathscr{C}$ is a $2d$-packing in $G$, $u'\in B_G(V(C), d)$ and $u'\not\in B_G(V(\bigcup\mathscr{C})\setminus V(C), d)$. Therefore $\proj_U(u')\in V(C)$, $V(U\angle{u'})\subseteq B_G(V(C), d)$, and $\len(U\angle{u'})\leq d$. Furthermore, either $u'\in V(C)$ or $u'$ is an internal vertex of $P_e$ for some $e\in I$ with $P_e\subseteq D'$. In the former case, $\proj_U(u')=u'\in V(D')$, and in the latter case $\proj_U(u')\in V(U\angle{u'})\subseteq V(P_e) \subseteq V(D')$. Hence $\proj_U(u')\in V(D')$ in both cases.

Let $v_0, v_1, \dots, v_s$ be the sequence of vertices along $P$ such that $v_0=u$, $v_s=u'$, and $s\leq d$. Since $V(P)\subseteq B_G(V(C), d) \subseteq V(G_d)$, an assumption of the lemma implies that $\spn(G_R, \bigcup\mathscr{C}, U, e) = \spn(G_d, \bigcup\mathscr{C}, U[V(G_d)], e) \leq \ell$ for every $e\in E(P)$. Therefore:
\begin{align}
    \dist_C(u,\proj_U(u'))=\dist_{\bigcup\mathscr{C}}(u,\proj_U(u')) &\leq \sum_{i=1}^{s}\dist_{\bigcup\mathscr{C}}(\proj_U(v_{i-1}), \proj_U(v_i))\notag\\
    &= \sum_{i=1}^{s}\textstyle\spn(G_R, \bigcup\mathscr{C}, U, v_{i-1}v_i)\leq s\ell \leq d\ell.\tag{$\mathbf{*}$}\label{eq:dist-between-u-and-proj_U(u')}
\end{align}
Now consider a shortest $(u,\proj_U(u'))$-path $S$ in $C$. Recall that $u\in V(D)$ and $\proj_U(u')\in V(D')$. Let $x$ be the first vertex in $S$ starting from $u$ such that $x\in V(P_e)$ for some $e\in I$ with $P_e\subseteq D$. Let $y$ be the first vertex in $S$ starting from $\proj_U(u')$ such that $y\in V(P_f)$ for some $f\in I$ with $P_f\subseteq D'$. Since $D$ and $D'$ are vertex-disjoint, both vertices $x$ and $y$ exist and $e\not=f$. Note that $\dist_C(x,y)\leq \dist_C(u, \proj_U(u')) \leq d\ell$ by \eqref{eq:dist-between-u-and-proj_U(u')}. On the other hand, $x\in V(C)\cap V(P_e)\subseteq \proj_U(e)$, $y\in V(C)\cap V(P_f) \subseteq \proj_U(f)$, and $e$ and $f$ are distinct elements of $I$ together imply $\dist_C(x,y)\geq \dist_{\bigcup\mathscr{C}}(\proj_U(e), \proj_U(f)) > d\ell$, a contradiction.
\end{clmproof}

Since $2|I|\geqslant s(k)$, $G'$ has at least $s(k)$ degree-$3$ vertices. So \cref{thm:simonovits} implies that $G'$ contains $k$ pairwise vertex-disjoint cycles. By \cref{clm:cycles-in-G'-are-long}, each of these cycles has length at least $\ell$. By \cref{clm:disjoint-cycles-in-G'-are-far-apart}, these cycles form a $d$-packing in $G$. Thus outcome \ref{item:packing:big-span-and-intersecting-balls} holds.

\textbf{Case (2)} $2|I|\leq s(k)-1$: Let
\[\textstyle Y\coloneqq \bigcup_{e\in I}\proj_U(e) \quad \text{and}\quad X\coloneqq \pad(\bigcup\mathscr{C}, Y, 2d, \ceil{\ell/2}).\]
Note that $Y\subseteq V(\bigcup\mathscr{C})$ and $|Y|= 2|I| \leq s(k)-1$. Then by \cref{cor:def-of-pad}, $X\subseteq V(\bigcup\mathscr{C})$ and $|X|\leq |Y|(2\ceil{\ell/2}+1)\leq (s(k)-1)(2\ceil{\ell/2}+1)$.

\begin{clm}\label{clm:W-is-hit}
$W\subseteq B_G(X, R+2d+1)$.
\end{clm}

\begin{clmproof}
Consider any $e\in \mathcal{E}$ and let $v$ be an endpoint of $e$. Note that $e\in V(H)$. If $e\in I$, then $\proj_U(v)\in \proj_U(e) \subseteq Y\subseteq X$, so $v\in B_G(X,R)$ as desired. Hence it may be assumed that $e\in V(H)\setminus I$. Since $I$ is a maximal independent set in $H$, there exists $f\in I$ such that $ef\in E(H)$. Then there are two cases, \textbf{(i)} $\dist_G(V(P_e), V(P_f))\leq d$, or \textbf{(ii)} $\dist_{\bigcup\mathscr{C}}(\proj_U(e), \proj_U(f))\leq d\ell$.

\textbf{Case (i)} $\dist_G(V(P_e), V(P_f))\leq d$: Let $u\in V(P_e)$ and $u'\in V(P_f)$ such that $\dist_G(u,u') = \dist_G(V(P_e), V(P_f))$, and let $P$ be a shortest $(u,u')$-path in $G$. Let $Q$ be the walk in $G$ starting from $\proj_U(u')$ and going to $u'$ along $P_f$, then following $P$ to $u$, and finally going to $v$ along $P_e$. Let $s$ be the length of $\proj_U(u')Qu'$. Since $U$ is a $\bigcup\mathscr{C}$-supported BFS-spanning subgraph of $G_R$, $\dist_G(V(\bigcup\mathscr{C}), u') = s$. Then $s\leq \dist_G(V(\bigcup\mathscr{C}), u)+\dist_G(u,u')$, which implies $\dist_G(V(\bigcup\mathscr{C}), u) \geqslant s-d$. Since each endpoint of $e$ is at distance at most $R$ from $V(\bigcup\mathscr{C})$ in $G$, the length of $u Q v$ is at most $R+1-(s-d)$. (The $+1$ accounts for the fact that $uQv$ may contain the edge $e$.) 
It follows that $Q$ has length at most $s+d+R+1-(s-d) =R+2d+1$. Recall that, since $f\in I$, $\proj_U(u')\in \proj_U(f)\subseteq Y$, thus $v\in B_G(\proj_U(u'), R+2d+1) \subseteq B_G(Y, R+2d+1) \subseteq B_G(X, R+2d+1)$, as desired.

\textbf{Case (ii)} $\dist_{\bigcup\mathscr{C}}(\proj_U(e), \proj_U(f))\leq d\ell$: Since $\proj_U(f) \subseteq Y$, $\dist_{\bigcup\mathscr{C}}(w,Y)\leq d\ell \leq 2d\ceil{\ell/2}$ for some $w\in \proj_U(e)$. Then \cref{cor:def-of-pad} implies that $\dist_{\bigcup\mathscr{C}}(w, X)\leq d$. Hence $v\in B_G(w, R+1) \subseteq B_G(X,R+d+1)$, as desired.
\end{clmproof}

\begin{clm}\label{clm:intersections-are-hit}
$\bigcup_{\set{C,D}\in \binom{\mathscr{C}}{2}} \left(B_G(V(C), r)\cap B_G(V(D), r)\right) \subseteq B_G(W,r-d-1)$.
\end{clm}

\begin{clmproof}
By an assumption of the lemma, $r>d$. Let $p\coloneqq |\mathscr{C}|$ and enumerate $\mathscr{C}$ as $C_1, C_2, \dots, C_p$. For each $i\in [p]$, let $U_i\coloneqq U[B_U(V(C_i), r)]$. Then $(V(U_i) : i\in [p])$ is a partition of $B_G(\bigcup\mathscr{C}, r)$.

Consider each $a\in \bigcup_{\set{C,D}\in \binom{\mathscr{C}}{2}} \left(B_G(V(C), r)\cap B_G(V(D), r)\right)$. Since $a\in B_G(V(\bigcup\mathscr{C}), r)$, 
there exists a unique $i\in [p]$ such that $a\in V(U_i)$. 
Therefore $a\in B_G(V(C_i), r)\cap B_G(V(\bigcup\mathscr{C})\setminus V(C_i), r)$ and $a\not\in V(\bigcup\mathscr{C})\setminus V(C_i)$.
Let $Q$ be a shortest $(a, V(\bigcup\mathscr{C})\setminus V(C_i))$-path in $G$. 
Note that $Q$ has an edge and $V(Q) \subseteq B_G(\bigcup\mathscr{C}, r)$. 
Let $u_a v_a$ be the first edge on $Q$ such that $u_a\in V(U_i)$ and $v_a\not\in V(U_i)$. Thus $v_a\in V(U_j)$ for some $j\in [p]\setminus \set{i}$. Therefore $u_av_a$ is an edge of $G_R$ with $\spn(G_R, \bigcup\mathscr{C}, U, u_av_a)=\infty$, so $\set{u_a,v_a}\subseteq W$. If $u_av_a$ is one of the last $d$ edges of $Q$, then $u_a\in B_G(V(\bigcup\mathscr{C}) \setminus V(C_i), d)$, implying $u_a\not\in V(U_i)$, a contradiction. Hence $\len(aQu_a)\leq \len(Q)-(d+1) \leq r-d-1$ and $a\in B_G(u_a, r-d-1) \subseteq B_G(W, r-d-1)$, as desired.
\end{clmproof}

\cref{clm:W-is-hit} and \cref{clm:intersections-are-hit} together imply
\[W\cup \bigcup_{\set{C,D}\in \binom{\mathscr{C}}{2}}\left(B_G(V(C), r)\cap B_G(V(D), r)\right) \subseteq B_G(W, r-d-1) \subseteq B_G(X, R+r+d),\]
hence \ref{item:alternative:big-span-and-intersecting-balls} holds.

This concludes the proof of the lemma.
\end{proof}

\begin{lem}\label{lem:cycle-helly-for-small-span-collection}
Let $k,r,\ell$ be positive integers with $\ell\geq 3$. Let $G$ be a graph, let $\mathscr{C}$ be a non-empty collection of pairwise vertex-disjoint cycles in $G$,
and let $U$ be a $\bigcup\mathscr{C}$-supported BFS-spanning subgraph of $G_r\coloneqq G[B_G(V(\bigcup\mathscr{C}), r)]$. If $\mathcal{H}$ is a collection of non-null connected subgraphs of $G_r$ such that $\spn(G_r, \bigcup\mathscr{C}, U, e) \leq \ell$ for every $e\in E(\bigcup\mathcal{H})$, then either
\begin{enumerate}
    \item $\mathcal{H}$ contains $k$ pairwise vertex-disjoint members, or\label{item:packing:cycle-helly-for-small-span-collection}
    \item there exists $Y\subseteq V(\bigcup\mathscr{C})$ with $|Y|\leq k-1+|\mathscr{C}|$ such that $B_U(B_{\bigcup\mathscr{C}}(Y,\lfloor \ell/2 \rfloor), r)\cap V(H)\not=\emptyset$ for all $H\in \mathcal{H}$.\label{item:alternative:cycle-helly-for-small-span-collection}
\end{enumerate}
\end{lem}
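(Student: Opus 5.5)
The plan is to turn each $H\in\mathcal{H}$ into a connected subgraph of a single cycle of $\mathscr{C}$ by projecting it along $U$. Then I reduce to the interval (path) Helly property by cutting each cycle at one vertex, which accounts for the $|\mathscr{C}|$ term in the bound.

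\textbf{Step 1: projected arcs.} For $H\in\mathcal{H}$, every edge $vw$ of $H$ has $\spn(G_r,\bigcup\mathscr{C},U,vw)\le\ell<\infty$, so $\proj_U(v)$ and $\proj_U(w)$ lie on the same cycle. Since $H$ is connected, all projections of vertices of $H$ lie on a single cycle $C_H\in\mathscr{C}$. Define $A_H\subseteq C_H$ as the union of the vertices $\proj_U(v)$ for $v\in V(H)$, together with, for each edge $vw\in E(H)$, a shortest $(\proj_U(v),\proj_U(w))$-path in $C_H$ (of length at most $\ell$). Then $A_H$ is a non-null connected subgraph of $C_H$, so it is either a subpath or all of $C_H$. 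Two facts follow.
\begin{itemize}
\item If $H,H'$ share a vertex $v$, then $\proj_U(v)\in V(A_H)\cap V(A_{H'})$. Hence pairwise disjoint arcs give pairwise disjoint members of $\mathcal{H}$.
\item If $y\in V(A_H)$, then $y$ lies on a shortest path of length at most $\ell$ whose endpoints are $\proj_U(v)$ and $\proj_U(w)$ for some $v,w\in V(H)$ (or $y=\proj_U(v)$). So some such projection is within $\lfloor\ell/2\rfloor$ of $y$ in $\bigcup\mathscr{C}$. Since $U\angle{v}$ is a path in $U$ of length at most $r$, we get $v\in B_U(B_{\bigcup\mathscr{C}}(y,\lfloor\ell/2\rfloor),r)\cap V(H)$.
\end{itemize}

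\textbf{Step 2: cutting the cycles.} For each $C\in\mathscr{C}$ fix a vertex $y_C$ and put all $y_C$ into $Y$; this costs $|\mathscr{C}|$ vertices. Every $H$ whose arc contains some $y_C$ is already handled by the second fact of Step 1; this includes every $H$ with $A_H=C_H$. Each remaining arc $A_H$ is a subpath of the forest $F\coloneqq\bigcup_{C\in\mathscr{C}}(C-y_C)$, which is a disjoint union of paths. The remaining arcs are therefore intervals in a linear order, obtained by concatenating these paths in any order.

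\textbf{Step 3: Helly for intervals.} Apply the classical greedy argument for intervals: repeatedly take the arc with the leftmost right endpoint, add that endpoint to a set $Y'$, and discard all arcs containing it. The selected arcs are pairwise disjoint. If at least $k$ are selected, outcome (i) follows from the first fact of Step 1. Otherwise $|Y'|\le k-1$ and every remaining arc meets $Y'$. Then $Y\cup Y'$ has size at most $k-1+|\mathscr{C}|$, and outcome (ii) follows from the second fact of Step 1.

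Invoking \cref{thm:alon} with $c=1$ would give $2(k-1)$ instead of $k-1$, so the direct interval argument is what I will use.

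\textbf{Expected obstacle.} The main care is in Step 1: verifying that $A_H$ is connected and lies in one cycle (this is where span $\le\ell$, and in particular finiteness, is used), and getting the radius exactly $\lfloor\ell/2\rfloor$ and $r$ in the hitting statement. The rest is routine.
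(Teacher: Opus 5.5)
Your proof is correct. It shares the paper's overall strategy: project each $H$ along $U$ onto a single cycle $C_H$, cut every cycle into paths, apply the Helly property for intervals, and pay $|\mathscr{C}|$ extra vertices for the cuts. The difference is in how the intervals are built, and that moves where the work lies.

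The paper deletes the whole ball $B_C(v_C,\lfloor \ell/2\rfloor)$ from each cycle. Every $H$ with a projection in that ball is caught by $v_C$. For the remaining $H$, the paper takes $S_H$ to be the minimal subpath of $P_C=C-B_C(v_C,\lfloor\ell/2\rfloor)$ containing $\proj_U(H)$. Because $S_H$ depends only on the projection set, a hitting vertex $z$ may lie strictly between projections. So the paper needs an extra step:
\begin{itemize}
\item follow a path $q_0\cdots q_s$ in $H$ to find consecutive projections $p_{t-1},p_t$ that are separated by $z$ inside $S_H$;
\item note that the route between them through the deleted ball has length at least $2(\lfloor \ell/2\rfloor+1)>\ell$;
\item conclude from span at most $\ell$ that the subpath of $S_H$ from $p_{t-1}$ to $p_t$ has length at most $\ell$, so $z$ is within $\lfloor\ell/2\rfloor$ of $p_{t-1}$ or $p_t$.
\end{itemize}

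Your arcs $A_H$ are filled in with the short connecting paths from the start. Hence connectivity of $A_H$, and the fact that every vertex of $A_H$ lies within $\lfloor\ell/2\rfloor$ of $\proj_U(H)$, are immediate. Any $H$ whose short arcs pass through the cut point, including one that wraps around all of $C_H$, is simply caught by $y_C$. So deleting a single vertex per cycle suffices.

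Your version is shorter and avoids the paper's case analysis. The only cost is an arbitrary choice of shortest paths, which is harmless here. Both arguments give the same bound $k-1+|\mathscr{C}|$.
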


\begin{proof}
For each $C\in \mathscr{C}$, arbitrarily choose a vertex $\mathdefin{v_C}\in V(C)$ and let $\mathdefin{P_C}\coloneqq C-B_C(v_C, \lfloor \ell/2\rfloor)$. Then $(P_C : C\in \mathscr{C})$ is a collection of pairwise vertex-disjoint paths (note that $V(P_C)=\emptyset$ is possible). Let $\mathdefin{F}\coloneqq \bigcup_{C\in \mathscr{C}}P_C$. For each $H\in \mathcal{H}$, let $\mathdefin{\proj_U(H)}\coloneqq \set{\proj_U(v):v\in V(H)}$.

Consider each $H\in \mathcal{H}$. For each $uv\in E(H)$, since $\spn(G_r, \bigcup\mathscr{C}, U, uv)$ is finite, there exists $C\in \mathscr{C}$ such that $\{\proj_U(u), \proj_U(v)\}\subseteq V(C)$. Moreover, since $H$ is connected and $\mathscr{C}$ is a collection of pairwise vertex-disjoint cycles, there exists a unique $\mathdefin{C(H)}\in \mathscr{C}$ such that $\proj_U(H) \subseteq V(C(H))$ (this is true even when $E(H)=\emptyset$). Let \defin{$\mathcal{H}'$} be the set of all $H\in \mathcal{H}$ such that $\proj_U(H)\subseteq V(P_{C(H)})$, and for each such $H$, let \defin{$S_H$} be the minimal subpath of $P_{C(H)}$ with $\proj_U(H)\subseteq V(S_H)$.

Since $F$ is a disjoint union of paths and $(S_H:H\in \mathcal{H}')$ is a collection of paths in $F$, the Helly property for intervals implies that either \textbf{(1)} $(S_H:H\in \mathcal{H}')$ has $k$ pairwise vertex-disjoint members, or \textbf{(2)} there exists $Z\subseteq V(F)$ with $|Z|\leq k-1$ such that $Z\cap V(S_H)\not=\emptyset$ for all $H\in \mathcal{H}'$.

\textbf{Case (1)} $(S_H:H\in \mathcal{H}')$ has $k$ pairwise vertex-disjoint members: Then there exists $\mathcal{P}\subseteq \mathcal{H}'$ with $|\mathcal{P}|=k$ such that $V(S_A)\cap V(S_B)=\emptyset$ for all distinct $A,B\in \mathcal{P}$. Therefore $\proj_U(A)\cap \proj_U(B)=\emptyset$ for all distinct $A,B\in \mathcal{P}$, which implies that $V(A)\cap V(B)=\emptyset$ for all distinct $A,B\in \mathcal{P}$. Hence $\mathcal{P}$ is a collection of $k$ pairwise vertex-disjoint members of $\mathcal{H}$, so \ref{item:packing:cycle-helly-for-small-span-collection} holds.

\textbf{Case (2)} there exists $Z\subseteq V(F)$ with $|Z|\leq k-1$ such that $Z\cap V(S_H)\not=\emptyset$ for all $H\in \mathcal{H}'$: Let $Y\coloneqq Z\cup \{v_C:C\in \mathscr{C}\}$, then $Y\subseteq V(\bigcup\mathscr{C})$ and $|Y|\leq k-1+|\mathscr{C}|$. We now show that $B_{\bigcup\mathscr{C}}(Y,\lfloor \ell/2 \rfloor)\cap \proj_U(H)\not=\emptyset$ for all $H\in \mathcal{H}$, which implies that $B_U(B_{\bigcup\mathscr{C}}(Y,\lfloor \ell/2 \rfloor), r) \cap V(H)\not=\emptyset$ for all $H\in \mathcal{H}$, then \ref{item:alternative:cycle-helly-for-small-span-collection} holds.
Consider any $H\in \mathcal{H}$ and let $C\coloneqq C(H)$.
It may be assumed that $B_C(v_C,\lfloor \ell/2 \rfloor)\cap \proj_U(H)=\emptyset$ and $Z\cap \proj_U(H)=\emptyset$. Then $\proj_U(H)\subseteq V(P_C)$ and $H\in \mathcal{H}'$. Let $z\in Z\cap V(S_H)$ as promised by the premise of the present case. The minimality of $S_H$ implies its endpoints lie in $\proj_U(H)$, thus $z$ is an inner vertex of $S_H$. Since $H$ is connected, there exists a path $q_0q_1\cdots q_s$ in $H$ such that $\proj_U(q_0)$ and $\proj_U(q_s)$ are the two endpoints of $S_H$. Let $p_i\coloneqq \proj_U(q_i)$ for each $i\in \{0, \dots, s\}$. Then $z\in V(S_H)=V(p_0S_H p_s)$  
and $z\not=p_s$. Let $t$ be minimum such that $z\not\in V(p_tS_H p_s)$. Then $t\in [s]$ and $z\in V(p_{t-1}S_H p_s)$. It follows that $p_tS_H p_s$ is a subpath of $p_{t-1}S_H p_s$, and $z\in V(p_{t-1}S_H p_t)$.
Since $z\not\in \proj_U(H)$, $p_{t-1}, z, p_t$ are distinct vertices in $P_C$. Therefore $C[B_C(v_C, \floor{\ell/2}+1)]$ is a path of length $2(\floor{\ell/2}+1)$, implying that the $(p_{t-1}, p_t)$-path in $C$ that does not contain $z$ has length at least $2(\floor{\ell/2}+1) > \ell$. However by an assumption of the lemma $\dist_C(p_{t-1}, p_t) \leq \ell$, therefore $\len(p_{t-1}S_H p_t) \leq \ell$. Consequently $B_C(z,\lfloor \ell/2 \rfloor) \cap \{p_{t-1}, p_t\} \not=\emptyset$, hence $B_{\bigcup\mathscr{C}}(Y,\lfloor \ell/2 \rfloor)\cap \proj_U(H)\not=\emptyset$, as desired.

This concludes the proof of the lemma.
\end{proof}

\begin{lem}\label{lem:hitting-long-cycles-close-to-packing}
Let $k,d,r,\ell$ be positive integers with $r\geq \ceil{d/2}$ and $\ell\geq 3$. Let $R\coloneqq  r+\ceil{d/2}$, let $G$ be a graph, let $\mathscr{C}$ be a non-empty $d$-packing of cycles in $G$ each of length at least $\ell$, let $U$ be a $\bigcup\mathscr{C}$-supported BFS-spanning subgraph of $G_R\coloneqq G[B_G(V(\bigcup\mathscr{C}), R)]$, and let $W$ be the set of endpoints of all edges $e\in E(G_R)$ such that $\spn(G_R, \bigcup\mathscr{C}, U, e) > \ell$. Then either
\begin{enumeratealph}
    \item $G$ contains a $d$-packing of $k$ cycles each of length at least $\ell$, or\label{item:packing:hitting-long-cycles-close-to-packing}
    \item there exists $Y\subseteq V(\bigcup\mathscr{C})$ with $|Y|\leq 2(k-1)$ such that, by letting $G_r\coloneqq G[B_G(V(\bigcup\mathscr{C}), r)]$ and  $X\coloneqq \pad(\bigcup\mathscr{C}, Y, 2\ceil{d/2}+1, \ceil{\ell/2})$, we have that
    \[G_r-\big(B_G(W, \lfloor d/2\rfloor)\cup B_{\bigcup\mathscr{C}\cup U}(X,R)\big)\]
    has no cycles of length at least $\ell$.\label{item:alternative:hitting-long-cycles-close-to-packing}
\end{enumeratealph}

\end{lem}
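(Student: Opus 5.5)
Let $G^\circ\coloneqq G_r-B_G(W,\lfloor d/2\rfloor)$. The plan is to apply \cref{lem:cycle-helly-for-small-span-collection} to the family $\mathcal{H}$ of subgraphs $G[B_G(V(D),\lfloor d/2\rfloor)]\cap G_r$, one for each cycle $D$ of length at least $\ell$ in $G^\circ$. Each member is a non-null connected subgraph of $G_r$, because $D$ is connected and shortest paths from $D$ of length at most $\lfloor d/2\rfloor$ stay inside the ball.

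First I check the span hypothesis. Let $e=uv$ be an edge of $\bigcup\mathcal{H}$. It lies within distance $\lfloor d/2\rfloor$ of some $D$, and $D$ avoids $B_G(W,\lfloor d/2\rfloor)$, so $u,v\notin W$. Also $u,v\in B_G(V(\bigcup\mathscr{C}),r+\lfloor d/2\rfloor)\subseteq V(G_R)$. Hence $e\in E(G_R)$ and $\spn(G_R,\bigcup\mathscr{C},U,e)\le\ell$. The lemma is stated for a BFS-spanning subgraph of $G_r$, so I apply it with $r$ replaced by $R$ (i.e.\ to $G_R$ and $U$). Since $\mathcal{H}$ consists of subgraphs of $G_R$, this is harmless.

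If outcome (i) holds, we get $k$ pairwise vertex-disjoint balls $B(V(D_i),\lfloor d/2\rfloor)$. Then $\dist_G(V(D_i),V(D_j))\ge 2\lfloor d/2\rfloor+2>d$, so the $D_i$ form a $d$-packing and outcome \ref{item:packing:hitting-long-cycles-close-to-packing} holds. There is a subtlety: the intersection with $G_r$ could destroy disjointness witnesses. To avoid this, I define members via $B_{G_R}$, since the ball of radius $\lfloor d/2\rfloor$ around $D\subseteq G_r$ lies in $G_R$.

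If outcome (ii) holds, we get $Y$ with $|Y|\le k-1+|\mathscr{C}|$. This bound is too weak, since we need $2(k-1)$ and independence from $|\mathscr{C}|$. So I will instead redo the interval-Helly argument directly, which is the main obstacle. A cycle $D$ of length at least $\ell$ whose projections all stay in one cycle $C$ with small spans must wind around $C$, or else cover a long arc. Concretely, $\proj_U(V(D))$ traces a closed walk on $C$ with steps of length at most $\ell$. Either the walk has nonzero winding, so every vertex of $C$ is within $\lfloor\ell/2\rfloor$ of $\proj_U(V(D))$, or it is null-homotopic. My plan is to avoid choosing $v_C$ arbitrarily. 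The $k$ packing members give, per cycle $C$, arcs; a circular-arc Helly bound gives $\le 2(k-1)$ hitting points overall (circular-arc families have cover number at most twice packing). This is where the factor $2$ comes from. Winding cycles get hit by any point of their $C$, and if some $C$ hosts none of the chosen points, I argue a winding $D$ around $C$ plus the other $k-1$ disjoint ones would give a packing.

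Finally I convert hitting in projection to the required set. If $\proj_U(x)$ is within $\lfloor\ell/2\rfloor$ along $\bigcup\mathscr{C}$ of $Y$ for some $x\in B(V(D),\lfloor d/2\rfloor)$, then by \cref{cor:def-of-pad}\ref{item:padding-property:def-of-pad} with $\alpha=2\lceil d/2\rceil+1$ and $\beta=\lceil\ell/2\rceil$, we have $\dist_{\bigcup\mathscr{C}}(\proj_U(x),X)\le\alpha/2$. Therefore $x$ lies within $\lceil d/2\rceil$ of $X$ along $\bigcup\mathscr{C}$, then $R'$ along $U$. To land on $D$ itself within radius $R$ in $\bigcup\mathscr{C}\cup U$, I choose $x$ on $D$ rather than in the padded ball. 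Hitting $V(D)$ via $U$-paths of length at most $r$ plus $\lceil d/2\rceil$ on the cycle gives $R=r+\lceil d/2\rceil$. I expect the winding/circular-arc step, and the bookkeeping to obtain exactly $2(k-1)$, to be the delicate part.
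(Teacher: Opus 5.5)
\textbf{There are two genuine gaps.}

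\emph{The bound $|Y|\le 2(k-1)$.} You discard the bound $|Y|\le k-1+|\mathscr{C}|$ from \cref{lem:cycle-helly-for-small-span-collection} as too weak. In fact it suffices once you note that $\mathscr{C}$ is itself a $d$-packing of cycles of length at least $\ell$. If $|\mathscr{C}|\ge k$, outcome (a) holds outright. Otherwise $|\mathscr{C}|\le k-1$, which gives $|Y|\le 2(k-1)$. The paper opens with exactly this one-line reduction, and it makes your winding/circular-arc detour unnecessary.

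As written, that detour is not a proof. In particular, the step ``a winding $D$ around $C$ plus the other $k-1$ disjoint ones would give a packing'' does not say where $k-1$ pairwise disjoint members come from, nor why they avoid the winding cycle. A per-cycle count $\tau_C\le\nu_C+1\le 2\nu_C$, taken over cycles hosting at least one member, could be made rigorous, but only by reproving the Helly lemma.

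\emph{The radius.} Disjointness of the balls $B_G(V(D_i),\lfloor d/2\rfloor)$ only gives $\dist_G(V(D_i),V(D_j))\ge 2\lfloor d/2\rfloor+1$, not $2\lfloor d/2\rfloor+2$. For odd $d$ this equals $d$, so you do not get a $d$-packing. For example, when $d=1$ the members are just $G[V(D)]$, and vertex-disjoint cycles may be joined by an edge.

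You therefore need radius $\lceil d/2\rceil$. But then the induced subgraph on $B_G(V(D),\lceil d/2\rceil)$ may contain an edge whose endpoints are both at distance $\lfloor d/2\rfloor+1$ from $D$ and both lie in $W$. That breaks the span hypothesis of \cref{lem:cycle-helly-for-small-span-collection}.

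The paper avoids this by setting $H_D\coloneqq D\cup U_D$, where $U_D$ is a $D$-supported BFS-spanning subgraph of $G[B_G(V(D),\lceil d/2\rceil)]$. Every edge of $H_D$ then has an endpoint within distance $\lceil d/2\rceil-1\le\lfloor d/2\rfloor$ of $D$. That endpoint is not in $W$, so the edge has span at most $\ell$. At the same time $V(H_D)=B_G(V(D),\lceil d/2\rceil)$, so disjoint members give a $2\lceil d/2\rceil$-packing.

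\textbf{A missing step in the hitting argument.} You cannot ``choose $x$ on $D$''. The lemma only provides some $u\in V(H_D)$ with $\proj_U(u)\in B_{\bigcup\mathscr{C}}(Y,\lfloor\ell/2\rfloor)$. You must walk back to some $v\in V(D)$ along at most $\lceil d/2\rceil$ edges of $H_D$, each of which moves the projection by at most $\ell$. This gives
\[
\dist_{\bigcup\mathscr{C}}(\proj_U(v),Y)\le\lceil d/2\rceil\ell+\lfloor\ell/2\rfloor\le(2\lceil d/2\rceil+1)\lceil\ell/2\rceil,
\]
which is why the padding uses $\alpha=2\lceil d/2\rceil+1$. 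Then \cref{cor:def-of-pad} gives $\dist_{\bigcup\mathscr{C}}(\proj_U(v),X)\le\lceil d/2\rceil$. Together with $v\in B_U(\proj_U(v),r)$, this yields the radius $R$.
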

\begin{proof}
If $|\mathscr{C}| \geq k$, then \ref{item:packing:hitting-long-cycles-close-to-packing} holds, hence it may be assumed that $|\mathscr{C}|\leq k-1$. Let \defin{$\mathcal{D}$} be the set of all cycles in $G_r - B_G(W, \lfloor d/2 \rfloor)$ of length at least $\ell$. For each $D\in \mathcal{D}$, let \defin{$U_D$} be a $D$-supported BFS-spanning subgraph of $G[B_G(V(D), \lceil d/2 \rceil)]$, and let $\mathdefin{H_D}\coloneqq D\cup U_D$.

\begin{clm}\label{clm:edges-of-H_D-have-big-span}
$\spn(G_R, \bigcup\mathscr{C}, U, e)\leq \ell$ for every $D\in \mathcal{D}$ and every $e\in E(H_D)$.
\end{clm}
\begin{clmproof}
Consider any $D\in \mathcal{D}$ and any edge $e\in E(H_D)$. Without loss of generality, $e=uv$ where $u\in B_G(V(D), \ceil{d/2}-1)$. Since $\ceil{d/2}-1 \leq \floor{d/2}$ and $B_G(V(D), \floor{d/2}) \cap W = \emptyset$, $u\not\in W$. This along with $uv\in E(G_R)$ implies that $\spn(G_R, \bigcup\mathscr{C}, U, uv) \leq \ell$, as desired.
\end{clmproof}

Now $\mathdefin{\mathcal{H}} \coloneqq  \set{H_D : D\in \mathcal{D}}$ is a collection of non-null connected subgraphs of $G_R$ that, by \cref{clm:edges-of-H_D-have-big-span}, satisfies $\spn(G_R, \bigcup\mathscr{C}, U, e)\leq \ell$ for every $e\in E(\bigcup\mathcal{H})$. Hence we may proceed by cases depending on the outcome of \cref{lem:cycle-helly-for-small-span-collection}.

\textbf{Case \ref{item:packing:cycle-helly-for-small-span-collection}} $\mathcal{H}$ contains $k$ pairwise vertex-disjoint members: Then there exists $\mathcal{P}\subseteq \mathcal{D}$ with $|\mathcal{P}|=k$ such that $V(H_A)\cap V(H_B)=\emptyset$ for all distinct $A,B\in \mathcal{P}$. Recall that $V(H_D)=V(U_D)=B_G(V(D), \ceil{d/2})$ for all $D\in \mathcal{D}$, thus $\mathcal{P}$ is a $(2\ceil{d/2})$-packing 
of $k$ cycles in $G$ each of length at least $\ell$, so \ref{item:packing:hitting-long-cycles-close-to-packing} holds.

\textbf{Case \ref{item:alternative:cycle-helly-for-small-span-collection}} there exists $Y\subseteq V(\bigcup\mathscr{C})$ with $|Y|\leq k-1+|\mathscr{C}|$ such that for $Z\coloneqq B_{\bigcup\mathscr{C}}(Y,\floor{\ell/2})$, $B_U(Z, R)\cap V(H)\not=\emptyset$ for all $H\in \mathcal{H}$: Since $|\mathscr{C}|\leq k-1$, $|Y|\leq 2(k-1)$. Let $X\coloneqq \pad(\bigcup\mathscr{C}, Y, 2\ceil{d/2}+1, \ceil{\ell/2})$. The following shows that $B_G(W, \lfloor d/2\rfloor)\cup B_{\bigcup\mathscr{C}\cup U}(X,R)$ meets every cycle in $G_r$ of length at least $\ell$, which implies that \ref{item:alternative:hitting-long-cycles-close-to-packing} holds. Consider any cycle $D$ in $G_r$ of length at least $\ell$. It may be assumed that $D$ has no vertex in $B_G(W, \lfloor d/2\rfloor)$.
Then $D\in \mathcal{D}$. Let $u\in B_U(Z, R)\cap V(H_D)$ as promised by the premise of the present case. Since $Z\subseteq V(\bigcup\mathscr{C})$, there exists $z\in Z$ such that $z=\proj_U(u)$. Since $u\in V(H_D)$, there exists $v\in V(D)$ and a path $q_0q_1\cdots q_s$ in $H_D$ such that $q_0=v$, $q_s=u$, and $s\leq \ceil{d/2}$. Then by \cref{clm:edges-of-H_D-have-big-span},
\begin{align*}
    \dist_{\bigcup\mathscr{C}}(\proj_U(v), z) &\leq \sum_{i=1}^s \dist_{\bigcup\mathscr{C}}(\proj_U(q_{i-1}), \proj_U(q_i))\\
    &= \sum_{i=1}^s \textstyle\spn(G_R, \bigcup\mathscr{C}, U, q_{i-1}q_i) \leq s\ell \leq \lceil d/2 \rceil \ell.
\end{align*}
Furthermore, since $z\in Z=B_{\bigcup\mathscr{C}}(Y, \floor{\ell/2})$,
\[\dist_{\bigcup\mathscr{C}}(\proj_U(v), Y)     \leq \dist_{\bigcup\mathscr{C}}(\proj_U(v), z) + \dist_{\bigcup\mathscr{C}}(z, Y) \leq \lceil d/2 \rceil \ell + \floor{\ell/2}\leq (2\ceil{d/2}+1)\ceil{\ell/2}.\]
Since $X=\pad(\bigcup\mathscr{C}, Y, 2\ceil{d/2}+1, \ceil{\ell/2})$, \cref{cor:def-of-pad} implies $\dist_{\bigcup\mathscr{C}}(\proj_U(v), X)\leq \ceil{d/2}$. Finally, since $v\in V(D) \subseteq V(G_r)$, $v\in B_U(\proj_U(v), r)$, thus $v\in B_{\bigcup\mathscr{C}\cup U}(X,r+\ceil{d/2})$. Consequently, $D$ has a vertex in $B_{\bigcup\mathscr{C}\cup U}(X,R)$ and \ref{item:alternative:hitting-long-cycles-close-to-packing} holds.

This concludes the proof of the lemma.
\end{proof}

We now prove a quantitative version of \cref{thm:main-in-intro}.

\begin{thm}\label{thm:main-technical-version}
Let $f:\N^2\to\N$ and $g:\N\to\N$ be the following functions:
\[f(k,\ell) = (51s(k)+104k-155)(2\ceil{\ell/2}+1)+k-1, \qquad g(d) = 21d.\]
For all positive integers $k,d,\ell$ with $\ell\geq 3$, for every graph $G$, either $G$ has a $d$-packing of $k$ cycles each of length at least $\ell$, or there exists $X\subseteq V(G)$ with $|X|\leq f(k, \ell)$ and $G-B_G(X, g(d))$ has no cycle of length at least $\ell$.
\end{thm}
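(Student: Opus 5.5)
We will argue by a greedy/extremal choice of a packing and then cover everything near it and everything far from it. Suppose $G$ has no $d$-packing of $k$ cycles of length at least $\ell$.

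\textbf{Step 1: build a well-behaved packing.} Grow a collection $\mathscr{C}$ of cycles of length at least $\ell$ that forms a $2d$-packing (in fact we would aim for a larger separation, say $\Theta(d)$, so that the hypotheses of \cref{lem:big-span-and-intersecting-balls} and \cref{lem:hitting-long-cycles-close-to-packing} hold simultaneously). Each cycle in $\mathscr{C}$ should be either \emph{medium}, i.e.\ of length at most $6d+2$ as in \cref{lem:medium-cycle-or-small-span}\ref{item:medium:medium-cycle-or-small-span}, or of \emph{small span}, as in \cref{lem:medium-cycle-or-small-span}\ref{item:small-span:medium-cycle-or-small-span}. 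Concretely, while some cycle $C$ of length at least $\ell$ avoids $B_G(V(\bigcup\mathscr{C}),\Theta(d))$, we apply \cref{lem:medium-cycle-or-small-span} to $C$ and add the resulting $C'$, which stays within $3d$ of $C$ and hence remains far from the earlier cycles. Medium cycles are handled trivially: a single vertex of a medium cycle $C'$ has $V(C')$ inside a ball of radius $3d+1$ around it. Since $\mathscr{C}$ is a $d$-packing, $|\mathscr{C}|\le k-1$. When the process stops, every long cycle meets $B_G(V(\bigcup\mathscr{C}),\Theta(d))$. By itself this is not enough, since a long cycle may touch this ball and still wander far away.

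\textbf{Step 2: cover the long cycles that stay near $\mathscr{C}$.} Let $\mathscr{C}_s\subseteq\mathscr{C}$ be the small-span cycles. Fix a BFS-spanning subgraph $U$ of $G_R$ with $R,r=\Theta(d)$. The small-span property of each member of $\mathscr{C}_s$, together with the $2d$-separation, gives that the $\bigcup\mathscr{C}_s$-span of $G_d$ is at most $\ell$. Then \cref{lem:big-span-and-intersecting-balls}, which cannot return a packing, gives $X_1$ of size $O(s(k)\ell)$ with $W$ and all pairwise ball-intersections inside $B_G(X_1,R+r+d)$. Next, \cref{lem:hitting-long-cycles-close-to-packing} gives $X_2$ of size $O(k\ell)$ such that every long cycle inside $G_r$ meets $B_G(W,\lfloor d/2\rfloor)\cup B_G(X_2,R)$. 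Both sets are therefore absorbed into $B_G(X_1\cup X_2,O(d))$. Adding one vertex per medium cycle, the total size is $O(s(k)\ell+k\ell)$, which matches the form of $f$.

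\textbf{Step 3 (main obstacle): long cycles not contained in $G_r$.} A long cycle $D$ that avoids $B_G(X,21d)$ must meet the $\Theta(d)$-neighbourhood of $\mathscr{C}$ but leave $G_r$. The plan is to use that $D$ then contains a long subpath crossing the annulus between radius $\Theta(d)$ and $r$. One option is to take $r$ a suitably larger multiple of $d$ than the Step 1 threshold and argue via $U$-projections: either $D$ passes through $W$, and so is covered, or its projection stays on one cycle $C$ in an interval of length $O(d\ell)$. The second situation is what the padding of $Y$ is designed to control.

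If this fails, the fallback is to iterate. We would apply \cref{lem:hitting-long-cycles-close-to-packing} to the graph with the ball removed, or restart Step 1 inside $G-B_G(X,\cdot)$, so that the new cycles together with $\mathscr{C}$ would form a larger packing. Each round would add $O(k\ell)$ vertices, and the number of rounds would be bounded by $k$ via the packing count. Tuning the constants so that everything fits into radius $21d$ and the stated $f(k,\ell)$ is the delicate bookkeeping; the conceptual difficulty is exactly this far-reaching-cycle case.
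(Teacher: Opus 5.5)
Your Steps 1 and 2 essentially match the paper. The paper keeps a $d$-packing $\mathcal{Z}$ of medium cycles, each hit by one vertex, separate from a $2d$-packing $\mathscr{C}$ of small-span cycles; it takes $r=6d$ and $R=r+\lceil d/2\rceil$, and gets $X_1,X_2$ from \cref{lem:big-span-and-intersecting-balls,lem:hitting-long-cycles-close-to-packing} as you describe. Your merged greedy packing would serve equally well.

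There is a genuine gap in Step 3, which you rightly call the crux: it is not carried out, and neither suggestion can work.
\begin{itemize}
\item \textbf{Projections.} $U$ and $\proj_U$ exist only inside $G_R$. A long cycle can leave $B_G(V(\bigcup\mathscr{C}),R)$ radially near one point of $C_i$, wander arbitrarily far, and re-enter near a distant point of $C_i$ or near some $C_j$. It never uses an edge of $G_R$ of large span, so it avoids $W$, and its projection is confined to no short interval. No constant choice of $r$ changes this. Even inside $G_R$, span at most $\ell$ per edge only bounds the projection of an $s$-edge path by $s\ell$. So a long piece of $D$ (say one running parallel to $C$) can project onto all of $C$; this is why even the inside case needs the interval-Helly lemma.
\item \textbf{Iteration.} There is no progress measure. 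By your stopping rule, every long cycle is already within $\Theta(d)$ of $\mathscr{C}$. Distances in $G-B_G(X,\cdot)$ can exceed those in $G$. So cycles found there need not extend $\mathscr{C}$ to a larger $d$-packing of $G$. Even if they did, $k$ rounds of $O(k\ell)$ vertices give $O(k^2\ell)$, not $f(k,\ell)$.
\item \textbf{Size count.} Your count covers only $X_0,X_1,X_2$. The dominant term $50(s(k)+2k-3)(2\lceil\ell/2\rceil+1)$ of $f$ comes precisely from the missing step.
\end{itemize}

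What is missing is the ear machinery. Cut each far-reaching long cycle at $B_G(V(\bigcup\mathscr{C}),r-d)$ into pieces inside this ball and \emph{ears}, whose interiors lie in $F_0^-=G-(Z\cup B_G(V(\bigcup\mathscr{C}),r-d))$. Extend each ear $P$ by its two $U$-legs to a walk $W_P$ ending on $\bigcup\mathscr{C}$.

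The key structural input comes from maximality of the packings: every long cycle meets $B_G(\bigcup\mathscr{C},4d)\cup B_G(\bigcup\mathcal{Z},4d)$. Hence the outer region $F_0$ has no long cycle, and $F_i=G[B_G(V(C_i),r)]-L$ has none by the $X_2$ step. By \cref{thm:birmele}, all of these have treewidth less than $\ell-1$.

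For each admissible ear (one with $W_P$ avoiding $B_G(M,d)$), the neighbourhoods of the ear interior, the legs, and the projections map to subgraphs with at most five components each. These live in the tree of a tree-decomposition of the disjoint union of copies of $\bigcup\mathscr{C}-L,F_0,F_1,\dots,F_p$. \cref{thm:alon} then gives one of two outcomes.
\begin{itemize}
\item \textbf{Packing.} There are $k^\star$ ears whose walks are pairwise more than $d$ apart and whose projections are more than $d\ell$ apart along $\bigcup\mathscr{C}$. Set aside the fewer than $k$ walks that already contain a cycle. Then $\bigcup\mathscr{C}$ plus the remaining walks has all degrees in $\{2,3\}$ and at least $s(k)$ vertices of degree $3$. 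By \cref{thm:simonovits} it contains $k$ long cycles that are pairwise more than $d$ apart.
\item \textbf{Covering.} There are at most $50(k^\star-1)$ nodes of the decomposition tree whose bags yield $X_3$.
\end{itemize}

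This works only for \emph{non-problematic} ears. An $(i,i)$-ear whose walk closes a cycle of length less than $\ell$ with $C_i$ would create short cycles in the Simonovits graph. The paper handles these by adding an edge between the endpoints of each such ear, giving $G_{\Aux}$. It checks that $U$ stays BFS-spanning and that these edges have span less than $\ell$. It then reapplies \cref{lem:cycle-helly-for-small-span-collection} to get $X_4$.

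None of these ingredients appears in your proposal, so the theorem is not proved.
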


\begin{proof}
If $G$ has a $d$-packing of $k$ cycles each of length at least $\ell$, then there is nothing to prove. Therefore we assume that $G$ does not have a $d$-packing of $k$ cycles each of length at least $\ell$.

Let \defin{$\mathcal{Z}$} be a maximal $d$-packing of cycles in $G$ such that $\ell \leq \len(D)\leq 6d+2$ for all $D\in \mathcal{Z}$. Let \defin{$\mathscr{C}$} be a maximal $2d$-packing of cycles in $G$ such that $\len(C)\geq \ell$ and the $C$-span of $G[B_G(V(C), d)]$ is at most $\ell$ for all $C\in \mathscr{C}$.

Note that $\mathcal{Z}$ and $\mathscr{C}$ have size at most $k-1$, and each may be the empty set.

\begin{clm}\label{clm:long-cycles-are-close-to-packing}
Every cycle in $G$ of length at least $\ell$ contains a vertex in $B_G(\bigcup \mathscr{C}, 4d)\cup B_G(\bigcup\mathcal{Z}, 4d)$.
\end{clm}

\begin{clmproof}
Assume for a contradiction that there exists a cycle $C$ in $G$ such that $C$ has length at least $\ell$ and contains no vertex in $B_G(\bigcup \mathscr{C}, 4d)\cup B_G(\bigcup\mathcal{Z}, 4d)$. Proceed by cases depending on the outcomes of \cref{lem:medium-cycle-or-small-span}.

\textbf{Case \ref{item:medium:medium-cycle-or-small-span}} there exists a cycle $C'$ in $G$ of length at least $\ell$ such that $V(C')\subseteq B_G(V(C), 3d)$ and the length of $C'$ is at most $6d+2$: Then $V(C') \cap B_G(\bigcup\mathcal{Z}, d)=\emptyset$, therefore $\mathcal{Z}\cup \set{C'}$ contradicts the maximality of $\mathcal{Z}$.

\textbf{Case \ref{item:small-span:medium-cycle-or-small-span}} there exists a cycle $C'$ in $G$ of length at least $\ell$ such that $V(C')\subseteq B_G(V(C), 2d)$ and the $C'$-span of $G[B_G(V(C'), d)]$ is at most $\ell$: Then $V(C')\cap B_G(\bigcup\mathscr{C}, 2d)=\emptyset$, therefore $\mathscr{C}\cup \set{C'}$ contradicts the maximality of $\mathscr{C}$.
\end{clmproof}

Let $\mathdefin{Z} \coloneqq  \textstyle B_G(\bigcup\mathcal{Z}, 4d)$ and let \mathdefin{$X_0$} be a set consisting of one vertex from each cycle in $\mathcal{Z}$. Then
\begin{equation}\label{eq:size-of-X_0}
    |X_0|\leq k-1.
\end{equation}
Furthermore, since each cycle in $\mathcal{Z}$ has length at most $6d+2$,
\begin{equation}\label{eq:Z-in-X_0-ball}
    Z \subseteq B_G(X_0, 7d+1).
\end{equation}

If $\mathscr{C}=\emptyset$, then \eqref{eq:size-of-X_0} and \eqref{eq:Z-in-X_0-ball} along with \cref{clm:long-cycles-are-close-to-packing} imply that the theorem holds. Hence from this point on we assume that $\mathscr{C}\not=\emptyset$ and thus, $\bigcup\mathscr{C}$-span is now well-defined for graphs that contain $\bigcup\mathscr{C}$ as a subgraph.

Let
\[\textstyle \mathdefin{G_d}\coloneqq G[B_G(V(\bigcup\mathscr{C}), d)].\]

\begin{clm}\label{clm:G_d-has-small-span}
The $\bigcup\mathscr{C}$-span of $G_d$ is at most $\ell$.
\end{clm}

\begin{clmproof}
Suppose $U$ is a $\bigcup\mathscr{C}$-supported BFS-spanning subgraph of $G_d$ and $e\in E(G_d)$. First consider the case that both endpoints of $e$ lie in $B_G(V(C), d)$ for some $C\in \mathscr{C}$. Let $G_C\coloneqq G[B_G(V(C), d)]$. Since $\mathscr{C}$ is a $2d$-packing in $G$, $S\coloneqq U[V(G_C)]$ is a $C$-supported BFS-spanning subgraph of $G_C$ such that $\spn(G_C, C, S, e) = \spn(G_d, \bigcup\mathscr{C}, U, e)$. Furthermore since $\spn(G_C, C, S, e)$ is finite and the $C$-span of $G_C$ is at most $\ell$ (by definition of $C\in \mathscr{C}$), $\spn(G_C, C, S, e) \leq \ell$. Thus $\spn(G_d, \bigcup\mathscr{C}, U, e) \leq \ell$. On the other hand, if $e$ is an edge between $B_G(V(C), d)$ and $B_G(V(D), d)$ for distinct $C,D\in\mathscr{C}$, then $\spn(G_d, \bigcup\mathscr{C}, U, e) = \infty$. It follows that the $(\bigcup \mathscr{C}, U)$-span of $G_d$ is at most $\ell$. Moreover, since $U$ was an arbitrary $\bigcup\mathscr{C}$-supported BFS-spanning subgraph of $G_d$, the $\bigcup\mathscr{C}$-span of $G_d$ is at most $\ell$, as desired.
\end{clmproof}

Let
\[\mathdefin{r}\coloneqq 6d,\]
\[\textstyle \mathdefin{G_r}\coloneqq G[B_G(V(\bigcup\mathscr{C}),r)],\]
\[\mathdefin{R}\coloneqq r+\ceil{d/2},\]
\[\textstyle \mathdefin{G_R}\coloneqq G[B_G(V(\bigcup\mathscr{C}), R)].\]
Let \defin{$U$} be a $\bigcup\mathscr{C}$-supported BFS-spanning subgraph of $G_R$.

Let \defin{$W$} be the set of all endpoints of edges $e\in E(G_R)$ such that $\spn(G_R, \bigcup\mathscr{C}, U, e)>\ell$. Since $\mathscr{C}$ is a $2d$-packing in $G_R$, every edge $e\in E(G[B_G(V(C), d)])$ has finite $(\bigcup\mathscr{C}, U)$-span for all $C\in \mathscr{C}$.  Thus \cref{clm:G_d-has-small-span} implies that every edge $e\in E(G[B_G(V(C), d)])$ satisfies $\spn(G_R, \bigcup\mathscr{C}, U, e) \leq \ell$.

Therefore for all $C\in \mathscr{C}$,
\begin{equation}\label{eq:d-1-balls-are-clean-of-W}
    \textstyle W\cap B_G(V(C), d-1) = \emptyset.
\end{equation}

Let
\[\mathdefin{I}\coloneqq \bigcup_{\set{C,D}\in \binom{\mathscr{C}}{2}}B_G(V(C), r)\cap B_G(V(D), r).\]

By \cref{clm:G_d-has-small-span} the $\bigcup\mathscr{C}$-span of $G_d$ is at most $\ell$. Since $R\geq r >d$, \cref{lem:big-span-and-intersecting-balls} implies that there exists $\mathdefin{X_1} \subseteq V(\bigcup\mathscr{C})$ such that
\begin{equation}\label{eq:size-of-X_1}
    |X_1|\leq (s(k)-1)(2\ceil{\ell/2}+1),
\end{equation}
and
\begin{equation}\label{eq:W-and-intersections-in-X_1-ball}
    W\cup I\subseteq B_G(W, r-d-1)
    \subseteq B_G(X_1, R+r+d).
\end{equation}

Since $\mathscr{C}$ is a $2d$-packing, it is a $d$-packing. Therefore since $r\geq \ceil{d/2}$, \cref{lem:hitting-long-cycles-close-to-packing} implies that there exists $\mathdefin{Y_2}\subseteq V(\bigcup\mathscr{C})$ with $|Y_2|\leq 2(k-1)$ such that, by letting
\[\textstyle\mathdefin{X_2}\coloneqq \pad(\bigcup\mathscr{C}, Y_2, 2\ceil{d/2}+1, \ceil{\ell/2}),\]
and
\[\mathdefin{L}\coloneqq B_G(W, \lfloor d/2 \rfloor) \cup B_{\bigcup\mathscr{C}\cup U}(X_2,R),\]
we have that
\begin{equation}\label{eq:L-hits-long-cycles-in-G_r}
    \textstyle \text{$G_r-L$ has no cycles of length at least $\ell$.}
\end{equation}
Then by \cref{cor:def-of-pad},
\begin{equation}\label{eq:size-of-X_2}
    |X_2|\leq |Y_2|(2\ceil{\ell/2}+1)\leq 2(k-1)(2\ceil{\ell/2}+1).
\end{equation}

Let
\[\mathdefin{F_{-1}} \coloneqq  \textstyle \bigcup\mathscr{C}-L,\]
\[\mathdefin{F_0} \coloneqq  \textstyle G-\big(Z\cup  B_G( V(\bigcup \mathscr{C}), r-2d)\big),\]
\[\mathdefin{F_0^-}\coloneqq \textstyle G-\big(Z\cup  B_G(V(\bigcup  \mathscr{C}), r-d)\big).\]
Then $F_0^-$ is an induced subgraph of $F_0$. Moreover by \cref{clm:long-cycles-are-close-to-packing},
\begin{equation}\label{eq:no-long-cycles-in-F_0}
    \text{$F_0$ and $F_0^-$ contain no cycle of length at least $\ell$.}
\end{equation}

Let $\mathdefin{p}\coloneqq |\mathscr{C}|$ and enumerate $\mathscr{C}$ as \defin{$C_1, C_2, \dots, C_p$}. For each $i\in [p]$ let
\[\mathdefin{F_i}\coloneqq G[B_G(V(C_i), r)]-L,\]
\[\mathdefin{F_i^-}\coloneqq G[B_G(V(C_i), r-d)]-L.\]
Then $F_i^-$ is an induced subgraph of $F_i$. Moreover, \eqref{eq:L-hits-long-cycles-in-G_r} guarantees that for all $i\in [p]$,

\begin{equation}\label{eq:no-long-cycles-in-F_i}
    \text{$F_i$ contains no cycle of length at least $\ell$.}
\end{equation}

Let
\begin{equation}\label{eq:def-of-M}
    \mathdefin{M} \coloneqq  B_G(X_0\cup X_1\cup X_2, R+r+d),
\end{equation}
then by \eqref{eq:Z-in-X_0-ball} and \eqref{eq:W-and-intersections-in-X_1-ball},
\begin{align}
    Z\cup L\cup I &\subseteq B_G(X_0, 7d+1)\cup B_G(W, \lfloor d/2 \rfloor) \cup B_{\bigcup\mathscr{C}\cup U}(X_2,R)\cup B_G(X_1, R+r+d)\notag\\
    &\subseteq B_G(X_0, 7d+1)\cup B_G(W, r-d-1) \cup B_G(X_2,R)\cup B_G(X_1, R+r+d)\notag\\
    &= B_G(X_0, 7d+1) \cup B_G(X_2,R)\cup B_G(X_1, R+r+d)\notag\\
    &\subseteq M.\label{eq:M-contains-Z-and-L-and-I}
\end{align}

For each $i\in [p]$, define $\mathdefin{U_i} \coloneqq  U[B_U(V(C_i), r-d)]$. Then $\bigcup_{i\in [p]}U_i$ is a forest and the sets $V(U_1), V(U_2), \dots, V(U_p)$ form a partition of $B_G(V(\bigcup\mathscr{C}), r-d)$. See \cref{fig:initial-summary} for a summary of the many key definitions given so far.

\begin{figure}[h]
\centering

\begin{tikzpicture}[smallMball/.style={circle, fill=\Mcol, fill opacity=0.7, inner sep=3pt}, Uline/.style={\Ucol, thick, draw, line cap=round},/tikz/xscale=1.8, /tikz/yscale=1.8]

% M

\node[smallMball] at (-0.75,0.3) {};

\node[smallMball, inner sep=5pt] at (-1.65,0.32) {};

\node[smallMball] at (-1.65,-0.7) {};

\node[smallMball] at (-2,1) {};

\node[smallMball, inner sep=6pt] at (-2.5,0.45) {};

\node[smallMball, inner sep=6pt] at (-0.5,-0.3) {};

\node[smallMball, inner sep=7pt] at (-2.4,-0.9) {};

\node[smallMball, inner sep=7pt] at (2.6,0.8) {};

\node[smallMball] at (1.9,0.25) {};

\node[smallMball] at (1.5,1.05) {};

\node[smallMball] at (0.7,-0.12) {};

\node[smallMball, inner sep=6pt] at (1.75,-0.25) {};

\node[smallMball, inner sep=6pt] at (0.7,0.85) {};

\node[smallMball, inner sep=9pt] at (-0.3,0.8) {};

\node[text=\Mcol, text opacity=0.7] at (2.95,0.98) {$M$};

% C_i
\def\ic{(-1.5,0)}

\path[draw, use Hobby shortcut, closed=true, thick, postaction={
    decorate,
    decoration={
      markings,
      mark=at position 0.2 with
      {
        \coordinate (x1);
      },
      mark=at position 0.6 with
      {
        \coordinate (x2);
      },
      mark=at position 0.05 with
      {
        \coordinate (x3);
      },
      mark=at position 0.4 with
      {
        \coordinate (x4);
      }
    }
  }]
(-1.9,0.1) .. (-1.75,0.2) .. (-1.5,0.4) .. (-1.5,-0.4);

\draw[opacity=0, postaction={
    decorate,
    decoration={
      markings,
      mark=at position 0.6 with
      {
        \coordinate (x5);
      }
    }
  }] \ic circle (0.48);

\draw[dashed, opacity=0.5] \ic circle (0.8);

\draw[dashed, postaction={
    decorate,
    decoration={
      markings,
      mark=at position 0.18 with
      {
        \coordinate (x6);
      },
      mark=at position 0.27 with
      {
        \coordinate (x7);
      },
      mark=at position 0.35 with
      {
        \coordinate (x8);
      },
      mark=at position 0.5 with
      {
        \coordinate (x9);
      },
      mark=at position 0.6 with
      {
        \coordinate (x10);
      },
      mark=at position 0.75 with
      {
        \coordinate (x11);
      },
      mark=at position 0.85 with
      {
        \coordinate (x12);
      },
    }
  }, opacity=0.5] \ic circle (1);

\draw[dashed, opacity=0.5] \ic circle (1.2);

\node[text opacity=0.7, circle, fill=white, fill opacity=1, inner sep=0pt] at (-2.6,-0.42) {$F_i$};

\node[text opacity=0.7, circle, fill=white, fill opacity=1, inner sep=0pt] at (-2,-0.6) {$F_i^-$};

\draw[->, thick, opacity=0.5] (x5) to (x10);

\coordinate (z1) at ($(x5)!0.1cm!-90:(x10)$);

\coordinate (z2) at ($($(x10)-(x5)+(z1)$)+0.35*($($(x10)-(x5)+(z1)$)-(z1)$)$);

\draw[->, thick, opacity=0.5] (z1) to (z2);

\node at (-1.5,0) {$C_i$};

% U_i

\path[Uline, postaction={
    decorate,
    decoration={
      markings,
      mark=at position 0.15 with
      {
        \coordinate (u1);
      },
      mark=at position 0.5 with
      {
        \coordinate (u2);
      }
    }
  }] (x2) to [out=-70,in=0] (x11);

\path[Uline] (u1) to [out=-90,in=90] (-1.7, -0.9);

\path[Uline] (u2) to [out=-90,in=90] (x12);

\path[Uline, postaction={
decorate,
decoration={
  markings,
  mark=at position 0.4 with
  {
    \coordinate (u3);
  },
  mark=at position 0.7 with
  {
    \coordinate (u4);
  }
}
}] (x3) to [out=120,in=100] (-2.3,-0.2);

\path[Uline] (u4) to [out=-120,in=90] (x9);

\path[Uline, postaction={
decorate,
decoration={
  markings,
  mark=at position 0.5 with
  {
    \coordinate (u5);
  }
}
}] (u3) to [out=-150,in=230] (x8);

\path[Uline] (u5) to [out=0,in=-90] (-1.8,0.7);

\path[Uline, postaction={
decorate,
decoration={
  markings,
  mark=at position 0.3 with
  {
    \coordinate (u6);
  }
}
}] (x4) to [out=0,in=45] (-0.8,-0.4);

\path[Uline] (u6) to [out=20,in=-180] (-0.6,0.4);

\path[Uline] (u6) to [out=0,in=-180] (-0.6,0);

\path[Uline, postaction={
decorate,
decoration={
  markings,
  mark=at position 0.3 with
  {
    \coordinate (u7);
  }
}
}] (x1) to [out=0,in=-180] (x6);

\path[Uline] (u7) to [out=80,in=-135] (x7);

\node[\Ucol] at (-1.05, 0.45) {$U_i$};

% C_j
\def\jc{(1.5,0.5)}

\path[draw, use Hobby shortcut, closed=true, thick, postaction={
    decorate,
    decoration={
      markings,
      mark=at position 0 with
      {
        \coordinate (y1);
      },
      mark=at position 0.2 with
      {
        \coordinate (y2);
      },
      mark=at position 0.4 with
      {
        \coordinate (y3);
      },
      mark=at position 0.6 with
      {
        \coordinate (y4);
      },
      mark=at position 0.8 with
      {
        \coordinate (y5);
      },
    }
  }]
(1.28,0.85) .. (1.78,0.95) .. (1.68,0.55) .. (1.88,0.25) .. (1.18,0.55);

\node at \jc {$C_j$};

\draw[dashed, opacity=0.5] \jc circle (0.8);

\draw[dashed, postaction={
    decorate,
    decoration={
      markings,
      mark=at position 0 with
      {
        \coordinate (v1);
      },
      mark=at position 0.1 with
      {
        \coordinate (v2);
      },
      mark=at position 0.15 with
      {
        \coordinate (v3);
      },
      mark=at position 0.3 with
      {
        \coordinate (v4);
      },
      mark=at position 0.4 with
      {
        \coordinate (v5);
      },
      mark=at position 0.45 with
      {
        \coordinate (v6);
      },
      mark=at position 0.6 with
      {
        \coordinate (v7);
      },
      mark=at position 0.65 with
      {
        \coordinate (v8);
      },
      mark=at position 0.7 with
      {
        \coordinate (v9);
      },
      mark=at position 0.85 with
      {
        \coordinate (v10);
      },
    }
  }, opacity=0.5] \jc circle (1);

\draw[dashed, opacity=0.5] \jc circle (1.2);

% U_j

\path[Uline, postaction={
decorate,
decoration={
  markings,
  mark=at position 0.15 with
  {
    \coordinate (m3);
  },
  mark=at position 0.4 with
  {
    \coordinate (m1);
  },
  mark=at position 0.7 with
  {
    \coordinate (m2);
  }
}
}] (y1) to [out=135,in=45] (v6);

\path[Uline] (m3) to [out=180,in=0] (0.8,0.5);

\path[Uline] (m1) to [out=135,in=-45] (v4);

\path[Uline] (m2) to [out=90,in=0] (v5);

\path[Uline] (y5) to [out=135,in=45] (v7);

\path[Uline, postaction={
decorate,
decoration={
  markings,
  mark=at position 0.4 with
  {
    \coordinate (m4);
  }
}
}] (y5) to [out=-90,in=45] (v9);

\path[Uline] (m4) to [out=180,in=90] (v8);

\path[Uline, postaction={
decorate,
decoration={
  markings,
  mark=at position 0.3 with
  {
    \coordinate (m5);
  }
}
}] (y4) to [out=-90,in=90] (v10);

\path[Uline] (m5) to [out=-45,in=0] (1.5,-0.4);

\path[Uline, postaction={
decorate,
decoration={
  markings,
  mark=at position 0.2 with
  {
    \coordinate (m6);
  },
  mark=at position 0.7 with
  {
    \coordinate (m7);
  }
}
}] (y2) to [out=90,in=-90] (v3);

\path[Uline] (m6) to [out=45,in=-70] (1.5,1.4);

\path[Uline] (m7) to [out=45,in=-135] (v2);

\path[Uline, postaction={
decorate,
decoration={
  markings,
  mark=at position 0.5 with
  {
    \coordinate (m8);
  }
}
}] (y3) to [out=60,in=-90] (2.1,0.9);

\path[Uline, postaction={
decorate,
decoration={
  markings,
  mark=at position 0.6 with
  {
    \coordinate (m9);
  }
}
}] (m8) to [out=45,in=-180] (2.3,0.2);

\path[Uline] (m9) to [out=45,in=-135] (v1);

\node[text=\Ucol, circle, fill=white, fill opacity=1, inner sep=0pt] at (2.3,-0.07) {$U_j$};

% F_0

\node[text opacity=0.7, circle, fill=white, fill opacity=1, inner sep=0pt] at (-0.36,0.33) {$F_0$};

\node[text opacity=0.7] at (-0.07,-0.07) {$F_0^-$};

\draw[<->, thick, opacity=0.5] ($(-1.5,0)!0.8cm!(1.5,0.5)$) to ($(1.5,0.5)!0.8cm!(-1.5,0)$);

\draw[<->, thick, opacity=0.5] ($($(-1.5,0)!1cm!(1.5,0.5)$)!0.15cm!90:(-1.5,0)$) to ($($(1.5,0.5)!1cm!(-1.5,0)$)!0.15cm!-90:(1.5,0.5)$);

\end{tikzpicture}

\caption{Simplified drawing showing the interplay between many of the key definitions so far.}
\label{fig:initial-summary}
\end{figure}

For each pair $i,j\in [p]$, a path $P\coloneqq v_0 v_1\cdots v_s$ in $G$ with $s\geqslant 2$ is an \defin{$(i,j)$-ear} if
\begin{itemize}
    \item $v_0\in V(U_i)$ and $v_s\in V(U_j)$;
    \item $v_1\cdots v_{s-1}$ is a path in $F_0^-$.
\end{itemize}
An \defin{ear} is an $(i,j)$-ear for some $i,j\in [p]$.
The \defin{first leg} of $P$ is the path $P_1\coloneqq U\angle{v_0}$, and the \defin{second leg} of $P$ is the path $P_2\coloneqq U\angle{v_s}$. Therefore $P_1$ is the unique path in $U_i$ between $v_0$ and $\proj_U(v_0)$, and $P_2$ is the unique path in $U_j$ between $v_s$ and $\proj_U(v_s)$. Note that $V(P_1)\cap V(P_2)$ may be non-empty for $(i,j)$-ears where $i=j$. Let
\[\mathdefin{\proj_U(P)}\coloneqq \set{\proj_U(v_0), \proj_U(v_s)}.\]
Then $\proj_U(P)$ is a non-empty subset of $V(\bigcup\mathscr{C})$. \cref{fig:ears} shows typical examples of ears and their projections. Define the walk 
\[\mathdefin{W_P} \coloneqq  P_1\cup P \cup P_2.\]
Note that the first and second legs of $P$ have exactly $r-d$ edges, whereas $P$ may be arbitrarily long. $P$ is said to be \defin{admissible} if $V(W_P)\cap B_G(M, d) = \emptyset$.

\begin{figure}[h]
\centering

\begin{tikzpicture}[Uline/.style={\Ucol, thick, draw, line cap=round}, Eline/.style={\Ecol, thick, draw, line cap=round}, proj/.style={circle, fill=\projcol, inner sep=1.2pt},/tikz/xscale=1.8, /tikz/yscale=1.8]

% C_i
\def\ic{(-1.5,0)}

\path[draw, use Hobby shortcut, closed=true, thick, postaction={
    decorate,
    decoration={
      markings,
      mark=at position 0.2 with
      {
        \coordinate (x1);
      },
      mark=at position 0.6 with
      {
        \coordinate (x2);
      },
      mark=at position 0.05 with
      {
        \coordinate (x3);
      },
      mark=at position 0.4 with
      {
        \coordinate (x4);
      },
      mark=at position 0.85 with
      {
        \coordinate (z);
      }
    }
  }]
(-1.9,0.1) .. (-1.75,0.2) .. (-1.5,0.4) .. (-1.5,-0.4);

\draw[opacity=0, postaction={
    decorate,
    decoration={
      markings,
      mark=at position 0.6 with
      {
        \coordinate (x5);
      }
    }
  }] \ic circle (0.48);

\draw[dashed, postaction={
    decorate,
    decoration={
      markings,
      mark=at position 0.18 with
      {
        \coordinate (x6);
      },
      mark=at position 0.27 with
      {
        \coordinate (x7);
      },
      mark=at position 0.35 with
      {
        \coordinate (x8);
      },
      mark=at position 0.5 with
      {
        \coordinate (x9);
      },
      mark=at position 0.6 with
      {
        \coordinate (x10);
      },
      mark=at position 0.75 with
      {
        \coordinate (x11);
      },
      mark=at position 0.85 with
      {
        \coordinate (x12);
      },
    }
  }, opacity=0.5] \ic circle (1);

\node at (-1.5,0) {$C_i$};

% U_i

\path[Uline, postaction={
    decorate,
    decoration={
      markings,
      mark=at position 0.15 with
      {
        \coordinate (u1);
      },
      mark=at position 0.5 with
      {
        \coordinate (u2);
      }
    }
  }] (x2) to [out=-70,in=0] (x11);

\path[Uline] (u1) to [out=-90,in=90] (-1.7, -0.9);

\path[Uline] (u2) to [out=-90,in=90] (x12);

\path[Uline, postaction={
decorate,
decoration={
  markings,
  mark=at position 0.4 with
  {
    \coordinate (u3);
  },
  mark=at position 0.7 with
  {
    \coordinate (u4);
  }
}
}] (x3) to [out=120,in=100] (-2.3,-0.2);

\path[Uline] (u4) to [out=-120,in=90] (x9);

\path[Uline, postaction={
decorate,
decoration={
  markings,
  mark=at position 0.5 with
  {
    \coordinate (u5);
  }
}
}] (u3) to [out=-150,in=230] (x8);

\path[Uline] (u5) to [out=0,in=-90] (-1.8,0.7);

\path[Uline, postaction={
decorate,
decoration={
  markings,
  mark=at position 0.3 with
  {
    \coordinate (u6);
  }
}
}] (x4) to [out=0,in=45] (-0.8,-0.4);

\path[Uline] (u6) to [out=20,in=-180] (-0.6,0.4);

\path[Uline] (u6) to [out=0,in=-180] (-0.6,0);

\path[Uline, postaction={
decorate,
decoration={
  markings,
  mark=at position 0.3 with
  {
    \coordinate (u7);
  }
}
}] (x1) to [out=0,in=-180] (x6);

\path[Uline] (u7) to [out=80,in=-135] (x7);

\path[Uline, postaction={
decorate,
decoration={
  markings,
  mark=at position 0.7 with
  {
    \coordinate (u8);
  }
}
}] (z) to [out=-135,in=135] (-2,-0.85);

\path[Uline] (u8) to [out=90,in=0] (-2.35,-0.5);

\node[\Ucol] at (-1.05, 0.45) {$U_i$};

% C_j
\def\jc{(1.5,0.5)}

\path[draw, use Hobby shortcut, closed=true, thick, postaction={
    decorate,
    decoration={
      markings,
      mark=at position 0 with
      {
        \coordinate (y1);
      },
      mark=at position 0.2 with
      {
        \coordinate (y2);
      },
      mark=at position 0.4 with
      {
        \coordinate (y3);
      },
      mark=at position 0.6 with
      {
        \coordinate (y4);
      },
      mark=at position 0.8 with
      {
        \coordinate (y5);
      },
    }
  }]
(1.28,0.85) .. (1.78,0.95) .. (1.68,0.55) .. (1.88,0.25) .. (1.18,0.55);

\node at \jc {$C_j$};

\draw[dashed, postaction={
    decorate,
    decoration={
      markings,
      mark=at position 0 with
      {
        \coordinate (v1);
      },
      mark=at position 0.1 with
      {
        \coordinate (v2);
      },
      mark=at position 0.15 with
      {
        \coordinate (v3);
      },
      mark=at position 0.3 with
      {
        \coordinate (v4);
      },
      mark=at position 0.4 with
      {
        \coordinate (v5);
      },
      mark=at position 0.45 with
      {
        \coordinate (v6);
      },
      mark=at position 0.6 with
      {
        \coordinate (v7);
      },
      mark=at position 0.65 with
      {
        \coordinate (v8);
      },
      mark=at position 0.7 with
      {
        \coordinate (v9);
      },
      mark=at position 0.85 with
      {
        \coordinate (v10);
      },
    }
  }, opacity=0.5] \jc circle (1);

% \draw[dashed, opacity=0.5] \jc circle (1.2);

% U_j

\path[Uline, postaction={
decorate,
decoration={
  markings,
  mark=at position 0.15 with
  {
    \coordinate (m3);
  },
  mark=at position 0.4 with
  {
    \coordinate (m1);
  },
  mark=at position 0.7 with
  {
    \coordinate (m2);
  }
}
}] (y1) to [out=135,in=45] (v6);

\path[Uline] (m3) to [out=180,in=0] (0.8,0.5);

\path[Uline] (m1) to [out=135,in=-45] (v4);

\path[Uline] (m2) to [out=90,in=0] (v5);

\path[Uline] (y5) to [out=135,in=45] (v7);

\path[Uline, postaction={
decorate,
decoration={
  markings,
  mark=at position 0.4 with
  {
    \coordinate (m4);
  }
}
}] (y5) to [out=-90,in=45] (v9);

\path[Uline] (m4) to [out=180,in=90] (v8);

\path[Uline, postaction={
decorate,
decoration={
  markings,
  mark=at position 0.3 with
  {
    \coordinate (m5);
  }
}
}] (y4) to [out=-90,in=90] (v10);

\path[Uline] (m5) to [out=-45,in=0] (1.5,-0.4);

\path[Uline, postaction={
decorate,
decoration={
  markings,
  mark=at position 0.2 with
  {
    \coordinate (m6);
  },
  mark=at position 0.7 with
  {
    \coordinate (m7);
  }
}
}] (y2) to [out=90,in=-90] (v3);

\path[Uline] (m6) to [out=45,in=-70] (1.5,1.4);

\path[Uline] (m7) to [out=45,in=-135] (v2);

\path[Uline, postaction={
decorate,
decoration={
  markings,
  mark=at position 0.5 with
  {
    \coordinate (m8);
  }
}
}] (y3) to [out=60,in=-90] (2.1,0.9);

\path[Uline, postaction={
decorate,
decoration={
  markings,
  mark=at position 0.6 with
  {
    \coordinate (m9);
  }
}
}] (m8) to [out=45,in=-180] (2.3,0.2);

\path[Uline] (m9) to [out=45,in=-135] (v1);

\node[text=\Ucol, circle, fill=white, fill opacity=1, inner sep=0pt] at (2.1,0) {$U_j$};

% F_0^-

\node[text opacity=0.7] at (0,0.1) {$F_0^-$};

\draw[<->, thick, opacity=0.5] ($(-1.5,0)!1cm!(1.5,0.5)$) to ($(1.5,0.5)!1cm!(-1.5,0)$);

% Ears

\path[Eline] (x6) to [out=0,in=0] (-1.2,1.4) to [out=180,in=45] (x7);

\node[\Ecol] at (-1.61,1.3) {$E$};

\path[Eline] (v5) to [out=180,in=-135] (0.8,1.4) to [out=45,in=90] (v3);

\node[\Ecol] at (0.55,1.4) {$Q$};

\path[Eline] (x12) to [out=-90,in=-135] (-0.2,-0.5) to [out=45,in=-135] (v9);

\node[\Ecol] at (0,-0.57) {$P$};

% Projections
\node[proj] at (x1) {};

\node[\projcol] at (-1.63,0.5) {$u$};

\node[proj] at (y1) {};

\node[\projcol] at (1.25,1.02) {$v$};

\node[proj] at (y2) {};

\node[\projcol] at (1.93,0.94) {$w$};

\node[proj] at (x2) {};

\node[\projcol] at (-1.15,-0.4) {$x$};

\node[proj] at (y5) {};

\node[\projcol] at (1.07,0.14) {$y$};

\end{tikzpicture}

\caption{$E$ is an $(i,i)$-ear with $\proj_U(E) = \set{x}$, $Q$ is a $(j,j)$-ear with $\proj_U(Q) = \set{v,w}$, and $P$ is an $(i,j)$-ear with $\proj_U(P) = \set{x,y}$.}
\label{fig:ears}
\end{figure}

Suppose $P\coloneqq v_0 v_1\cdots v_s$ is an admissible $(i,j)$-ear and let $P_0 \coloneqq  v_1 \cdots v_{s-1}$. For each $m\in \set{-1,0,1,\dots, p}$, let
\[
\mathdefin{\Psi_m(P)} \coloneqq  \begin{cases}
    \emptyset & \text{if $m\not\in \set{-1,0,i,j}$,}\\
    B_{\bigcup\mathscr{C}-L}(\proj_U(P), d\ell) & \text{if $m=-1$,}\\
    B_G(V(P_0), d) & \text{if $m=0$,}\\
    B_G(V(W_P)\cap V(U_m), d) & \text{if $m\in \set{i,j}$.}
\end{cases}
\]

\begin{comment}
%%%%%%%%%%
% New definition of $\Psi_m$ is compact and equivalent to the old one.
%%%%%%%%%%

Suppose $P\coloneqq v_0 v_1\cdots v_s$ is an admissible $(i,j)$-ear with first leg $P_1$ and second leg $P_2$, and let $P_0 \coloneqq  v_1 \cdots v_{s-1}$. For each $m\in \set{-1,0,1,\dots, p}$, let
\[
\mathdefin{\Psi_m(P)} \coloneqq  \begin{cases}
    \emptyset & \text{if $m\not\in \set{-1,0,i,j}$,}\\
    B_{\bigcup\mathscr{C}-L}(\proj_U(P), d\ell) & \text{if $m=-1$,}\\
    B_G(V(P_0), d) & \text{if $m=0$,}\\
    B_G(V(P_1), d) & \text{if $m=i$ and $m\not=j$,}\\
    B_G(V(P_2), d) & \text{if $m\not=i$ and $m=j$,}\\
    B_G(V(P_1\cup P_2), d) & \text{if $m=i$ and $m=j$.}
\end{cases}
\]
\end{comment}

\begin{clm}\label{clm:components-of-Psi}
If $P$ is an admissible ear, then
\begin{enumerate}
    \item $G[\Psi_{-1}(P)]$ is a subgraph of $F_{-1}$ and has at most two components;\label{item:-1:components-of-Psi}
    \item $G[\Psi_0(P)]$ is a connected subgraph of $F_0$;\label{item:0:components-of-Psi}
    \item for all $m\in [p]$, $G[\Psi_m(P)]$ is a subgraph of $F_m$ and has $c_m$ components where $c_m\in \set{0,1,2}$. Moreover, $\sum_{m\in [p]}c_m \leq 2$.\label{item:positive:components-of-Psi}
\end{enumerate}
\end{clm}

\begin{clmproof}
Write $P=v_0v_1\cdots v_s$ and let $i,j\in [p]$ such that $P$ is an $(i,j)$-ear. Let $P_1$ and $P_2$ be the first and second legs of $P$ respectively, and let $P_0 \coloneqq  v_1\cdots v_{s-1}$. Let $u_0\coloneqq \proj_U(v_0)$ and $u_s\coloneqq \proj_U(v_s)$. Recall that $Z\cup L\subseteq M$ by \eqref{eq:M-contains-Z-and-L-and-I}.

To prove \ref{item:-1:components-of-Psi}, it suffices to show that $B_{C_i-L}(u_0, d\ell)$ is a connected set of vertices in $C_i-L$, and $B_{C_j-L}(u_s, d\ell)$ is a connected set of vertices in $C_j-L$. However this is trivial since each is a ball centred at some vertex.

Next, we prove \ref{item:0:components-of-Psi}. Since $P$ is admissible, $B_G(V(W_P), d) \cap M=\emptyset$. Thus $P_0\subseteq W_P$ and $Z\subseteq M$ together imply $B_G(V(P_0), d) \cap Z=\emptyset$. Furthermore since $P_0\subseteq F_0^-$, $V(P_0) \cap B_G(V(\bigcup\mathscr{C}), r-d)=\emptyset$. Hence $B_G(V(P_0), d)$ and $Z\cup B_G(V(\bigcup\mathscr{C}), r-2d)$ are disjoint. Therefore, since $P_0$ is connected, $G[B_G(V(P_0), d)]$ is a connected subgraph of $G-(Z\cup B_G(V(\bigcup\mathscr{C}), r-2d))$. In other words, $G[\Psi_0(P)]$ is a connected subgraph of $F_0$.

It remains to prove \ref{item:positive:components-of-Psi}. Let $m\in [p]$. Note that $m\not=0$. If $m\not\in \set{i,j}$, then $\Psi_m(P) = \emptyset$, implying that $G[\Psi_m(P)]\subseteq F_m$ and $c_m=0$. Therefore it suffices to show that $B_G(V(P_1), d)$ is a connected set of vertices in $F_i$, and $B_G(V(P_2), d)$ is a connected set of vertices in $F_j$.
We only prove the first statement as the second is symmetric. Since $P$ is admissible and $L\subseteq M$, $B_G(V(W_P), d)\cap L=\emptyset$, thus $B_G(V(P_1), d)\cap L=\emptyset$. Furthermore, since $V(P_1)\subseteq V(U_i) \subseteq B_G(V(C_i), r-d)$, $B_G(V(P_1), d)\subseteq B_G(V(C_i), r)\setminus L$. Therefore, since $P_1$ is connected, $B_G(V(P_1), d)$ is a connected set of vertices in $G[B_G(V(C_i), r)]-L$. In other words, $B_G(V(P_1), d)$ is a connected set of vertices in $F_i$ as desired.
\end{clmproof}

\begin{clm}\label{clm:dist-and-proj}
If $P$ and $Q$ are admissible ears such that $\Psi_{-1}(P) \cap \Psi_{-1}(Q)=\emptyset$, then
\[\dist_{\bigcup\mathscr{C}}(\proj_U(P), \proj_U(Q)) > d\ell.\]
\end{clm}

\begin{clmproof} Let $P=v_0v_1\cdots v_s$ and let $i,j\in [p]$ such that $P$ is an $(i,j)$-ear. Let $P_1$ and $P_2$ be the first and second legs of $P$ respectively, and let $P_0 \coloneqq  v_1\cdots v_{s-1}$. Similarly, write $Q=w_0w_1\cdots w_h$ and let $i',j'\in [p]$ such that $Q$ is an $(i',j')$-ear. Let $Q_1$ and $Q_2$ be the first and second legs of $Q$ respectively, and let $Q_0 \coloneqq  w_1\cdots w_{h-1}$.

Proceed by contraposition and suppose that $\dist_{\bigcup\mathscr{C}}(\proj_U(P), \proj_U(Q)) \leq d\ell$. Let $S$ be a shortest path in $\bigcup\mathscr{C}$ between $\proj_U(P)$ and $\proj_U(Q)$, and let $C\in \mathscr{C}$ such that $S\subseteq C$. The following shows that $S\subseteq C-L$, which implies that $\dist_{\bigcup\mathscr{C}-L}(\proj_U(P), \proj_U(Q))\leq d\ell$, hence $\Psi_{-1}(P)\cap \Psi_{-1}(Q)\not=\emptyset$ as required. As a first step, we show that the components of $C[V(C)\cap L]$ each have at least $\gamma\coloneqq \min\set{\len(C), d\ell+2}$ vertices. 
Recall that $X_2=\pad(\bigcup\mathscr{C}, Y_2, \alpha, \beta)$ (where $\alpha=2\ceil{d/2}+1$ and $\beta=\ceil{\ell/2}$). Since $X_2\subseteq V(\bigcup\mathscr{C})$ and the components of $\bigcup\mathscr{C}\cup U$ are the graphs $C_1\cup U[B_U(V(C_1), R)],\dots, C_p\cup U[B_U(V(C_p), R)]$, for all $t \geq 0$ we have
\begin{equation}\label{eq:C-intersect-L}
    V(C)\cap B_{\bigcup\mathscr{C}\cup U}(X_2, t) = V(C)\cap B_{\bigcup\mathscr{C}}(X_2, t) = B_C(V(C)\cap X_2, t).
\end{equation}
By \cref{cor:def-of-pad} and $2(\alpha\beta+\lfloor\alpha/2\rfloor)+1\geq (d+1)(\ell+1) > d\ell+2$, the components of $C[V(C)\cap B_{\bigcup\mathscr{C}}(X_2, \floor{\alpha/2})]$ each have at least $\min\set{\len(C),d\ell+2} = \gamma$ vertices. Then \eqref{eq:C-intersect-L} implies the components of $C[B_C(V(C)\cap X_2, \floor{\alpha/2})]$ each have at least $\gamma$ vertices. Since $R\geq\floor{\alpha/2}$, the components of $C[B_C(V(C)\cap X_2, R)]$ each have at least $\gamma$ vertices, hence the components of $C[V(C)\cap B_{\bigcup\mathscr{C}\cup U}(X_2, R)]$ each have at least $\gamma$ vertices by \eqref{eq:C-intersect-L}. Now recall that $L=B_G(W, \floor{d/2})\cup B_{\bigcup\mathscr{C}\cup U}(X_2, R)$. Since $\floor{d/2}\leq d-1$, \eqref{eq:d-1-balls-are-clean-of-W} implies $B_G(W, \floor{d/2})\cap V(C)=\emptyset$, thus $V(C)\cap L = V(C)\cap B_{\bigcup\mathscr{C}\cup U}(X_2, R)$. It follows that the components of $C[V(C)\cap L]$ each have at least $\gamma$ vertices, as required. Next, since $P$ and $Q$ are admissible and $L\subseteq M$ by \eqref{eq:M-contains-Z-and-L-and-I}, the set of endpoints of $S$ is disjoint from $L$. Assume for a contradiction that $V(S)\cap L\not=\emptyset$. Then there exists a component of $C[V(C)\cap L]$ that is a subgraph of $S$, implying $\len(S)=|V(S)|-1 \geq \min\set{\len(C)-1, d\ell+1}$. However, by definition of $S$, $\len(S) \leq \min\set{\len(C)/2, d\ell} < \min\set{\len(C)-1,d\ell+1}$, a contradiction. Hence $V(S)\cap L=\emptyset$, implying $S\subseteq C-L$ as desired.
\end{clmproof}

\begin{clm}\label{clm:dist-and-W}
If $P$ and $Q$ are admissible ears such that $\Psi_m(P) \cap \Psi_m(Q)=\emptyset$ for all $m\in \set{0,1, \dots, p}$, then
\[\dist_G(V(W_P), V(W_Q)) > d.\]
\end{clm}

\begin{clmproof}
Write $P=v_0v_1\cdots v_s$ and let $i,j\in [p]$ such that $P$ is an $(i,j)$-ear. Let $P_1$ and $P_2$ be the first and second legs of $P$ respectively, and let $P_0 \coloneqq  v_1\cdots v_{s-1}$. Similarly, write $Q=w_0w_1\cdots w_h$ and let $i',j'\in [p]$ such that $Q$ is an $(i',j')$-ear. Let $Q_1$ and $Q_2$ be the first and second legs of $Q$ respectively, and let $Q_0 \coloneqq  w_1\cdots w_{h-1}$.

Proceed by contraposition and suppose that $\dist_G(V(W_P), V(W_Q)) \leq d$. Let $v\in V(W_P)$ and $w\in V(W_Q)$ such that $\dist_G(v,w) \leq d$. Then either \textbf{(1)} $v\in V(P_0)$ or $w\in V(Q_0)$, or \textbf{(2)} $v\not\in V(P_0)$ and $w\not\in V(Q_0)$.

\textbf{Case (1)} $v\in V(P_0)$ or $w\in V(Q_0)$: Without loss of generality assume the latter. Then it is immediate that $v\in B_G(w, d) \subseteq \Psi_0(Q)$, so it suffices to show that $v\in \Psi_0(P)$. If $v\in V(P_0)$, then $v\in \Psi_0(P)$. Otherwise $v$ lies in the first or second leg of $P$. Without loss of generality $v\in V(P_1)$. Let $S\coloneqq v P_1 v_0$. Recall that $w\in V(Q_0) \subseteq V(F_0^-)$. Also, recall that $P_1$ is a shortest path in $G$ between $V(\bigcup\mathscr{C})$ and $v_0$, and $\len(P_1)=r-d$, thus $B_G(v,\len(S)) \subseteq B_G(V(\bigcup\mathscr{C}), r-d)$. Hence $B_G(v, \len(S))$ is disjoint from $V(F_0^-)$. Therefore since $w\in B_G(v, d)\cap V(F_0^-)$, $\len(S)\leq d-1$. Then $S\cup v_0v_1$ is a path between $v$ and $V(P_0)$ of length at most $d$, which implies that $v\in B_G(V(P_0), d) = \Psi_0(P)$, as desired.

\textbf{Case (2)} $v\not\in V(P_0)$ and $w\not\in V(Q_0)$: Without loss of generality assume that $v\in V(P_1)$ and $w\in V(Q_1)$. It is immediate that $w\in B_G(v, d) \subseteq \Psi_i(P)$, so it suffices to show that $w\in \Psi_i(Q)$. By \cref{clm:components-of-Psi} $\Psi_i(P) \subseteq V(F_i)$, thus $w\in B_G(V(C_i), r)$. This along with $w\in V(Q_1) \subseteq V(U_{i'}) \subseteq B_G(V(C_{i'}), r)$ shows that $w\in B_G(V(C_i), r)\cap B_G(V(C_{i'}), r)$. Now since $Q$ is admissible and $w\in V(W_Q)$, $w\not\in M$. In particular $w\not\in I$ by \eqref{eq:M-contains-Z-and-L-and-I}. Consequently, $i=i'$. Hence $w \in B_G(V(Q_1), d) \subseteq \Psi_{i'}(Q) = \Psi_{i}(Q)$, as desired.
\end{clmproof}

\begin{clm}\label{clm:hitting-Phi-and-Psi}
If $P$ is an admissible ear and there exists sets $S\subseteq V(\bigcup\mathscr{C})$ and $S'\subseteq V(G)$ such that $S\cap \Psi_{-1}(P)\not=\emptyset$ or $S'\cap \bigcup_{m=0}^p\Psi_m(P)\not=\emptyset$, then $B_G(\pad(\bigcup\mathscr{C}, S, 2d, \ceil{\ell/2}) \cup S',r) \cap V(P)\not=\emptyset$.
\end{clm}

\begin{clmproof}
First suppose that $S\cap \Psi_{-1}(P)\not=\emptyset$. Then $S\cap B_{\bigcup\mathscr{C}}(\proj_U(P), d\ell)\not=\emptyset$. In other words, there exists $u\in \proj_U(P)$ such that $\dist_{\bigcup\mathscr{C}}(u, S) \leq d\ell \leq 2d \ceil{\ell/2}$. Then \cref{cor:def-of-pad} implies $\dist_{\bigcup\mathscr{C}}(u, \pad(\bigcup\mathscr{C}, S, 2d, \ceil{\ell/2}))\leq d$. Moreover, since $B_G(u, r-d)\cap V(P)\not=\emptyset$, $B_G(\pad(\bigcup\mathscr{C}, S, 2d, \ceil{\ell/2}), r) \cap V(P)\not=\emptyset$ as required.

Next, suppose that $S'\cap \bigcup_{m=0}^p\Psi_m(P)\not=\emptyset$. Since $\bigcup_{m=0}^p\Psi_m(P)\subseteq B_G(V(W_P), d)$, $S\cap B_G(V(W_P), d) \not=\emptyset$. Then since $V(W_P)\subseteq B_G(V(P), r-d)$, $S'\cap B_G(V(P), r)\not=\emptyset$, hence $B_G(S', r)\cap V(P)\not=\emptyset$ as required.
\end{clmproof}

Let \defin{$F_{-1}^\star, F_0^\star, \dots, F_p^\star$} be pairwise vertex-disjoint graphs such that $F_i^\star$ is isomorphic to $F_i$ for all $i\in \set{-1,0,\dots, p}$. Note that unlike $(F_{-1}^\star, F_0^\star, \dots, F_p^\star)$, the graphs in $(F_{-1}, F_0, \dots, F_p)$ may intersect each other. For each $i\in\set{-1, 0,\dots, p}$ let \defin{$\pi_i:V(F_i)\rightarrow V(F_i^\star)$} be an isomorphism from $F_i$ to $F_i^\star$. Let $\mathdefin{F^\star} = F_{-1}^\star\cup \bigcup_{i=0}^{p}F_i^\star$.

Let \defin{$\beta_{-1}$} and \defin{$\beta_{\geq0}$} be minimum-width tree-decompositions of $F_{-1}^\star$ and $\bigcup_{i=0}^{p}F_i^\star$ respectively. Let \defin{$T_{-1}$} and \defin{$T_{\geq0}$} be the underlying trees of $\beta_{-1}$ and $\beta_{\geq0}$ respectively, which we may assume to be vertex-disjoint. Let \defin{$T$} be a tree that contains $T_{-1} \cup T_{\geq0}$ as a spanning subgraph. Then $\mathdefin{\beta}\coloneqq \beta_{-1} \cup \beta_{\geq0}$ is a tree-decomposition of $F^\star$. Now $\bigcup\mathscr{C}-L$ is a forest by \eqref{eq:L-hits-long-cycles-in-G_r}, thus $\beta_{-1}$ has width at most $1$. Furthermore, \cref{thm:birmele}, \eqref{eq:no-long-cycles-in-F_0}, and \eqref{eq:no-long-cycles-in-F_i} together imply that $\beta_{\geq0}$ has width less than $\ell-1$. Therefore for all $t\in V(T)$,
\begin{equation}\label{eq:size-of-bags}
    |\beta(t)| \leq \begin{cases}
        2 & \text{if $t\in V(T_{-1})$,}\\
        \ell-1 & \text{if $t\in V(T_{\geq0})$.}
    \end{cases}
\end{equation}

For subsets $S\subseteq V(F^\star)$, let $\mathdefin{T(S)}\coloneqq T[\set{t\in V(T) : \beta(t)\cap S\not=\emptyset}]$.

An ear $P$ is \defin{$i$-problematic}, for some $i\in [p]$, if it is an $(i,i)$-ear and there exists a cycle in $W_P\cup C_i$ of length less than $\ell$. An ear is \defin{problematic} if it is $i$-problematic for some $i\in [p]$. An ear is \defin{non-problematic} if it is not problematic. For non-problematic admissible ears $P$, let $\mathdefin{\Psi^\star(P)}\coloneqq  \bigcup_{m=-1}^{p}\pi_m(\Psi_m(P))$. Let \defin{$\mathcal{A}$} be the set of all graphs $T(\Psi^\star(P))$ where $P$ is a non-problematic admissible ear. By \cref{clm:components-of-Psi}, $F^\star[\Psi^\star(P)]$ has at most five components for every non-problematic admissible ear $P$. Therefore $\mathcal{A}$ is a collection of subgraphs of $T$ each with at most five components.

Let $\mathdefin{k^\star}$ be the minimum integer such that $2k^\star \geq s(k)+2(k-1)$. Proceed by cases depending on the outcome of \cref{thm:alon} when applied to $T$, $\mathcal{A}$, and $k^\star$.

\textbf{Case \ref{item:packing:alon}} $\mathcal{A}$ has $k^\star$ pairwise vertex-disjoint members: Then there exists a collection \defin{$\mathcal{N}$} of $k^\star$ non-problematic admissible ears such that $(T(\Psi^\star(P)) : P\in \mathcal{N})$ is a collection of pairwise vertex-disjoint graphs. Thus $(\Psi^\star(P):P\in \mathcal{N})$ is a collection of pairwise disjoint sets of vertices of $F^\star$. Consequently, for all distinct $P,Q\in \mathcal{N}$ and all $m\in \{-1, 0, \dots, p\}$ we have $\Psi_m(P)\cap \Psi_m(Q)=\emptyset$. Then by \cref{clm:dist-and-proj} and \cref{clm:dist-and-W}, for all distinct $P,Q\in \mathcal{N}$,
\begin{equation}\label{eq:W_P-and-W_Q-are-far-in-C}
    \dist_{\bigcup\mathscr{C}}(\proj_U(P), \proj_U(Q)) > d\ell.
\end{equation}
and
\begin{equation}\label{eq:W_P-and-W_Q-are-far-in-G}
    \dist_G(V(W_P), V(W_Q)) > d,
\end{equation}

For each $P\in \mathcal{N}$, either $W_P$ is a path or $W_P$ contains a cycle. Let \defin{$\mathcal{P}$} be the set of all $P\in \mathcal{N}$ such that $W_P$ is a path.  

\begin{clm}\label{clm:non-problematic-walks-with-cycles}
$G$ contains a $d$-packing of $|\mathcal{N}\setminus \mathcal{P}|$ cycles each of length at least $\ell$.
\end{clm}

\begin{clmproof}
For each ear $E\in \mathcal{N}\setminus \mathcal{P}$, let $D_P$ be a cycle in $W_E$. Therefore \eqref{eq:W_P-and-W_Q-are-far-in-G} implies that $\set{D_E : E\in \mathcal{N}\setminus \mathcal{P}}$ is a $d$-packing of $|\mathcal{N}\setminus \mathcal{P}|$ cycles in $G$. Now consider each $E\in \mathcal{N}\setminus \mathcal{P}$. Since $W_E$ contains a cycle, $E$ must be an $(i,i)$-ear for some $i\in [p]$. Then since $E$ is non-problematic, every cycle in $W_E\cup C_i$ has length at least $\ell$, implying $\len(D_E)\geq \ell$.
\end{clmproof}

If $|\mathcal{N}\setminus \mathcal{P}| \geq k$, then by \cref{clm:non-problematic-walks-with-cycles} $G$ contains a $d$-packing of $k$ cycles each of length at least $\ell$, a contradiction. Hence $|\mathcal{N}\setminus \mathcal{P}| \leq k-1$, which along with $|\mathcal{N}| = k^\star \geq s(k)/2+k-1$ implies that $2|\mathcal{P}| \geq s(k)$.

Consider the graph 
\[\textstyle \mathdefin{G'}\coloneqq \bigcup\mathscr{C}\cup \bigcup_{E\in \mathcal{P}}W_E.\]
Recall that $\mathscr{C}$ is a collection of pairwise vertex-disjoint cycles. Also, \eqref{eq:W_P-and-W_Q-are-far-in-G} implies that $(W_E : E\in \mathcal{P})$ is a collection of pairwise vertex-disjoint paths. Furthermore, no edge or inner vertex of $W_E$ appears in $\bigcup \mathscr{C}$ for all $E\in \mathcal{P}$. Therefore every vertex $v\in V(G')$ satisfies $\deg_{G'}(v)\in \set{2,3}$, and the degree-3 vertices of $G'$ are precisely the endpoints of paths in $(W_E:E\in \mathcal{P})$. Hence $G'$ has $2|\mathcal{P}| \geq s(k)$ degree-3 vertices.

Any path in $G'$ whose endpoints are degree-$3$ vertices and whose inner vertices are degree-$2$ is a \defin{segment}. Any segment is either a $W_E$ for some $E\in \mathcal{P}$ (called a \defin{$\mathcal{P}$-segment}) or is a path in $\bigcup\mathscr{C}$ between an endpoint of $W_P$ and an endpoint of $W_Q$ for some $P, Q\in \mathcal{P}$ (called a \defin{$\mathscr{C}$-segment}).

\begin{clm}\label{clm:main-cycles-in-G'-are-long}
Every cycle in $G'$ has length at least $\ell$.
\end{clm}

\begin{clmproof}
Every cycle in $\mathscr{C}$ has length at least $\ell$. Now consider any cycle $D$ in $G'$ that is not in $\mathscr{C}$. Then $D$ contains a $\mathcal{P}$-segment. If $D$ contains exactly one $\mathcal{P}$-segment, then there exists $P\in \mathcal{P}$ and $i\in [p]$ such that $P$ is an $(i,i)$-ear and $D \subseteq W_P\cup C_i$. Since $P$ is non-problematic, every cycle in $W_P\cup C_i$ has length at least $\ell$, implying $\len(D)\geq\ell$. On the other hand, if $D$ contains more than one $\mathcal{P}$-segment, then there exists distinct $P,Q\in \mathcal{P}$ and a $\mathscr{C}$-segment $S$ between an endpoint of $W_P$ and an endpoint of $W_Q$ such that $S\subseteq D$. Thus \eqref{eq:W_P-and-W_Q-are-far-in-C} implies $\len(D) > \len(S) \geq \dist_{\bigcup\mathscr{C}}(\proj_U(P), \proj_U(Q)) > d\ell \geq \ell$.
\end{clmproof}

We remark that the following proof is similar to \cref{clm:disjoint-cycles-in-G'-are-far-apart}.

\begin{clm}\label{clm:main-disjoint-cycles-in-G'-are-far-apart}
For every pair of vertex-disjoint cycles $D$ and $D'$ in $G'$, $\dist_G(V(D), V(D')) > d$.
\end{clm}

\begin{clmproof}
Assume for a contradiction that there exists a pair of vertex-disjoint cycles $D$ and $D'$ in $G'$ such that $\dist_G(V(D), V(D'))\leq d$. Consider any $u\in V(D)$ and $u'\in V(D')$ with $\dist_G(u,u') \leq d$.

If $u\in V(W_P)$ and $u'\in V(W_Q)$ for some $P,Q\in \mathcal{P}$ with $W_P\subseteq D$ and $W_Q\subseteq D'$, then since $D$ and $D'$ are vertex-disjoint, $P\not=Q$. Therefore $\dist_G(u,u') \geq \dist_G(V(W_P), V(W_Q)) > d$ by \eqref{eq:W_P-and-W_Q-are-far-in-G}, a contradiction. Hence it may be assumed that $u\not\in V(W_P)$ for every $P\in \mathcal{P}$ with $W_P\subseteq D$. Consequently, there exists $C\in \mathscr{C}$ such that $u\in V(C)$.

Let $P$ be a shortest $(u,u')$-path in $G$. Since $\dist_G(u,u')\leq d$, $\len(P)\leq d$, so $V(P)\subseteq B_G(V(C), d)$. In particular, since $\mathscr{C}$ is a $2d$-packing in $G$, $u'\in B_G(V(C), d)$ and $u'\not\in B_G(V(\bigcup\mathscr{C})\setminus V(C), d)$. Therefore $\proj_U(u')$ exists and is a vertex of $C$. Furthermore, either $u'\in V(C)$ or $u'$ lies in the first or second leg of some $Q\coloneqq q_0 q_1 \cdots q_h\in \mathcal{P}$ with $W_Q\subseteq D'$. In the former case, $\proj_U(u')=u'\in V(D')$, and in the latter case $\proj_U(u')\in V(U\angle{u'}) \subseteq V(U\angle{q_0}\cup U\angle{q_h}) \subseteq V(W_Q) \subseteq V(D')$. Hence $\proj_U(u')\in V(D')$ in both cases.

Let $v_0, v_1, \dots, v_s$ be the sequence of vertices along $P$ such that $v_0=u$, $v_s=u'$, and $s\leq d$. Since $V(P)\subseteq B_G(V(C), d) \subseteq V(G_d)$, then \cref{clm:G_d-has-small-span} implies that $\spn(G_R, \bigcup\mathscr{C}, U, e) = \spn(G_d, \bigcup\mathscr{C}, U[V(G_d)], e) \leq \ell$ for every $e\in E(P)$. Therefore:
\begin{align}
    \dist_C(u,\proj_U(u'))=\dist_{\bigcup\mathscr{C}}(u,\proj_U(u')) &\leq \sum_{i=1}^{s}\dist_{\bigcup\mathscr{C}}(\proj_U(v_{i-1}), \proj_U(v_i))\notag\\
    &= \sum_{i=1}^{s}\textstyle\spn(G_R, \bigcup\mathscr{C}, U, v_{i-1}v_i)\leq s\ell \leq d\ell.\label{eq:main-dist-between-u-and-proj_U(u')}
\end{align}
Now consider a shortest $(u,\proj_U(u'))$-path $S$ in $C$. Recall that $u\in V(D)$ and $\proj_U(u')\in V(D')$. Let $x$ be the first vertex in $S$ starting from $u$ such that $x\in V(W_Q)$ for some $Q\in \mathcal{P}$ with $W_Q\subseteq D$. Let $y$ be the first vertex in $S$ starting from $\proj_U(u')$ such that $y\in V(W_E)$ for some $E\in \mathcal{P}$ with $W_E\subseteq D'$. Since $D$ and $D'$ are vertex-disjoint, both vertices $x$ and $y$ exist and $Q\not=E$. Note that $\dist_C(x,y)\leq \dist_C(u, \proj_U(u')) \leq d\ell$ by \eqref{eq:main-dist-between-u-and-proj_U(u')}. On the other hand, $x\in V(C)\cap V(W_Q) \subseteq \proj_U(Q)$, $y\in V(C)\cap V(W_E)\subseteq \proj_U(E)$, $Q$ and $E$ are distinct elements of $\mathcal{P}\subseteq \mathcal{N}$, and \eqref{eq:W_P-and-W_Q-are-far-in-C} together imply $\dist_C(x,y)\geq \dist_{\bigcup\mathscr{C}}(\proj_U(Q), \proj_U(E)) > d\ell$, a contradiction.
\end{clmproof}

Since $G'$ is a graph with all vertices of degree $2$ or $3$ and contains at least $s(k)$ degree-3 vertices, \cref{thm:simonovits} implies that $G'$ contains $k$ pairwise vertex-disjoint cycles. By \cref{clm:main-cycles-in-G'-are-long}, each of these cycles has length at least $\ell$, and by \cref{clm:main-disjoint-cycles-in-G'-are-far-apart}, these cycles form a $d$-packing in $G$. Hence $G$ contains a $d$-packing of $k$ cycles each of length at least $\ell$, a contradiction.

\textbf{Case \ref{item:covering:alon}} there exists a subset $\mathdefin{X^\star}\subseteq V(T)$ with $|X^\star|\leq 50(k^\star-1)$ such that $X^\star\cap V(A)\not=\emptyset$ for all $A\in \mathcal{A}$: Recall that $\mathcal{A}$ is the set of all graphs $T(\Psi^\star(P))$ where $P$ is a non-problematic admissible ear. Let $\mathdefin{B_{-1}^\star}\coloneqq \bigcup_{t\in X^\star \cap V(T_{-1})}\beta(t)$ and $\mathdefin{B_{\geq0}^\star}\coloneqq \bigcup_{t\in X^\star \cap V(T_{\geq0})}\beta(t)$. Consider any non-problematic admissible ear $P$. The definition of $X^\star$ implies that $(B_{-1}^\star\cup B_{\geq0}^\star)\cap \Psi^\star(P)\not=\emptyset$. Therefore $B_{-1}^\star\cap \pi_{-1}(\Psi_{-1}(P))\not=\emptyset$ or $B_{\geq0}^\star\cap \bigcup_{m=0}^{p}\pi_m(\Psi_m(P))\not=\emptyset$. Let 
$\mathdefin{S_{-1}}\coloneqq \pi_{\!-1}^{-1}(B_{-1}^\star)$ and $\mathdefin{S_{\geq0}}\coloneqq \bigcup_{m=0}^{p}\pi_m^{-1}(B_{\geq0}^\star)$. Then $S_{-1}\subseteq V(\bigcup\mathscr{C})$, $S_{\geq0}\subseteq V(G)$, and $S_{-1}\cap\Psi_{-1}(P)\not=\emptyset$ or $S_{\geq0}\cap \bigcup_{m=0}^{p}\Psi_m(P)\not=\emptyset$. Consequently, by letting $\mathdefin{X_3}\coloneqq  \pad(\bigcup\mathscr{C}, S_{-1}, 2d, \ceil{\ell/2})\cup S_{\geq0}$, \cref{clm:hitting-Phi-and-Psi} implies that
\begin{equation}\label{eq:X_3-hits-non-problematic-admissible-ears}
    \text{$\textstyle B_G(X_3,r)\cap V(P)\not=\emptyset$ for every non-problematic admissible ear $P$.}
\end{equation}
By \cref{cor:def-of-pad},
\begin{align}
    |X_3| &\leq \textstyle |\pad(\bigcup\mathscr{C}, S_{-1}, 2d, \ceil{\ell/2})| + |S_{\geq0}|\notag\\
    &\leq |S_{-1}|(2\ceil{\ell/2}+1) + |S_{\geq0}|.\notag
\intertext{Since $|S_{-1}|\leq |B_{-1}^\star|$ and $|S_{\geq0}|\leq |B_{\geq0}^\star|$, we have}
    |X_3| &\leq |B_{-1}^\star|(2\ceil{\ell/2}+1) + |B_{\geq0}^\star|.\notag
\intertext{By \eqref{eq:size-of-bags},}
    |X_3|&\leq 2|X^\star \cap V(T_{-1})|(2\ceil{\ell/2}+1) + (\ell-1)|X^\star \cap V(T_{\geq0})|.\notag
\intertext{Since $|X^\star \cap V(T_{-1})|+|X^\star \cap V(T_{\geq0})|=|X^\star|\leq 50(k^\star-1)$ and $\ell-1\leq 2\ceil{\ell/2}+1$,}
    |X_3| &\leq (2|X^\star|-|X^\star \cap V(T_{\geq0})|)(2\ceil{\ell/2}+1)\notag\\
    &\leq 100(k^\star-1)(2\ceil{\ell/2}+1).\notag\\
\intertext{Since $2(k^\star-1) \leq s(k)+2(k-1)-1$,}
    |X_3| &\leq 50(s(k)+2(k-1)-1)(2\ceil{\ell/2}+1).\label{eq:size-of-X_3}
\end{align}

\begin{clm}\label{clm:cycles-far-from-M-and-X_3-decompose}
For every cycle $D$ in $G$ of length at least $\ell$, either
\begin{enumerate}
    \item $D$ contains a vertex in $B_G(M\cup X_3,r)$, or\label{item:D-is-close-to-M-or-X_3:cycles-far-from-M-and-X_3-decompose}  
    \item \label{item:decomposition:cycles-far-from-M-and-X_3-decompose} $D$ is the union of pairwise internally disjoint paths $Q_1\cup \cdots \cup Q_{2t}$ with $t\geq 2$, such that
    \begin{itemize}
        \item the start vertex of $Q_i$ is the end vertex of $Q_{i-1 \;\mathrm{mod}\;2t}$ for all $i\in [2t]$;
        \item $V(Q_1\cup Q_3\cup \cdots \cup Q_{2t-1}) = V(D)\cap B_G(V(\bigcup\mathscr{C}), r-d)$;
        \item $\set{Q_2, Q_4, \dots, Q_{2t}}$ is a collection of problematic admissible ears.
    \end{itemize}
\end{enumerate}
\end{clm}

\begin{clmproof}
Consider any cycle $D$ in $G$ of length at least $\ell$. If $D$ contains a vertex in $B_G(M\cup X_3,r)$, then \ref{item:D-is-close-to-M-or-X_3:cycles-far-from-M-and-X_3-decompose} holds. Hence it may be assumed that $V(D)\cap B_G(M\cup X_3,r)=\emptyset$. Recall that $Z\cup L\subseteq M$ by \eqref{eq:M-contains-Z-and-L-and-I}, hence $V(D)\cap Z$ and $V(D)\cap L$ are empty.

As a preliminary step towards \ref{item:decomposition:cycles-far-from-M-and-X_3-decompose}, we show that $V(D)\subseteq (B_G(V(\bigcup\mathscr{C}), r-d)\setminus L)\cup V(F_0^-)$, $V(D)\not \subseteq B_G(V(\bigcup\mathscr{C}), r-d)\setminus L$, and $V(D)\not\subseteq V(F_0^-)$. Then, since $G[B_G(V(\bigcup\mathscr{C}), r-d)\setminus L]$ and $F_0^-$ are vertex-disjoint subgraphs of $G$, it will follow that $D$ is the union of pairwise internally disjoint paths $Q_1\cup \cdots \cup Q_{2t}$ such that
\begin{itemize}
    \item the start vertex of $Q_i$ is the end vertex of $Q_{i-1 \;\mathrm{mod}\; 2t}$ for all $i\in [2t]$;
    \item $V(Q_1\cup Q_3\cup \cdots \cup Q_{2t-1}) = V(D)\cap B_G(V(\bigcup\mathscr{C}), r-d)$;
    \item for all $i\in[t]$, $\len(Q_{2i})\geq 2$ and the inner vertices of $Q_{2i}$ lie in $V(F_0^-)$.
\end{itemize}

Since $V(D)\cap Z=\emptyset$, $V(D)\subseteq B_G(V(\bigcup\mathscr{C}), r-d) \cup V(F_0^-)$. Furthermore, since $V(D)\cap L=\emptyset$, $V(D)\subseteq (B_G(V(\bigcup\mathscr{C}), r-d)\setminus L) \cup V(F_0^-)$. Now \eqref{eq:L-hits-long-cycles-in-G_r} implies $V(D)\not\subseteq B_G(V(\bigcup\mathscr{C}), r-d)\setminus L$, and since $F_0^-$ is an induced subgraph of $G$, \eqref{eq:no-long-cycles-in-F_0} implies $V(D)\not\subseteq V(F_0^-)$. This completes the preliminary step.

To show \ref{item:decomposition:cycles-far-from-M-and-X_3-decompose}, it now suffices to show that $\set{Q_2, Q_4, \dots, Q_{2t}}$ is a collection of problematic admissible ears. Consider any $Q\in \set{Q_2, Q_4, \dots, Q_{2t}}$ and let $Q'$ be the path obtained from $Q$ by deleting its endpoints. Since $V(Q')\subseteq F_0^-$ and $F_0^-$ is an induced subgraph of $G$, $Q'$ is a path in $F_0^-$. Therefore since the endpoints of $Q$ lie in $B_G(V(\bigcup\mathscr{C}), r-d) = \bigcup_{i\in [p]}V(U_i)$, $Q$ is an ear. To see that $Q$ is admissible, note that since $Q\subseteq D$ and $V(D)\cap B_G(M,r)=\emptyset$, $V(Q)\cap B_G(M,r)=\emptyset$, thus $V(W_Q) \subseteq B_G(V(Q), r-d)$ implies that $V(W_Q)\cap B_G(M, d)=\emptyset$. Finally, if $Q$ is non-problematic, then \eqref{eq:X_3-hits-non-problematic-admissible-ears} implies $B_G(X_3,r)\cap V(Q)\not=\emptyset$, therefore $B_G(X_3, r)\cap V(D)\not=\emptyset$, a contradiction.
\end{clmproof}

Let \defin{$\mathcal{E}$} be the set of all pairs $uv\in \binom{V(G)}{2}$ such that there exists a problematic admissible ear whose set of endpoints is $\set{u,v}$. Consider the auxiliary graph $\mathdefin{G_{\Aux}}\coloneqq G_R\cup \mathcal{E}$.

Let \defin{$\mathcal{D}$} be the set of cycles $D$ in $G$ that satisfy $B_G(M\cup X_3, r+d)\cap V(D)=\emptyset$ and $\len(D)\geq \ell$. For each $D\in \mathcal{D}$, let $\mathdefin{H_D}\coloneqq G_{\Aux}[B_{G_{\Aux}}(V(D)\cap B_G(V(\bigcup\mathscr{C}), r-d), \ceil{d/2})]$. Observe that by decomposing $D\in\mathcal{D}$ into a union of paths $Q_1\cup \cdots \cup Q_{2t}$ as per \cref{clm:cycles-far-from-M-and-X_3-decompose}, we may write
\begin{equation}\label{eq:H_D}
    H_D=G_{\Aux}[B_{G_{\Aux}}(V(Q_1\cup Q_3\cup \cdots \cup Q_{2t-1}), \ceil{d/2})].
\end{equation}

\begin{clm}\label{clm:disjoint-H's-implies-far-away-D's}
For every pair $D,D'\in \mathcal{D}$ with $V(H_D)\cap V(H_{D'})=\emptyset$, $\dist_G(V(D), V(D'))>d$.
\end{clm}

\begin{clmproof}
Assume for a contradiction that there exists a pair $D, D'\in\mathcal{D}$ with $V(H_D)\cap V(H_{D'})=\emptyset$ such that $\dist_G(V(D), V(D'))\leq d$. Recall that $B_G(M\cup X_3, r)\cap V(D)=\emptyset$ and $B_G(M\cup X_3, r)\cap V(D')=\emptyset$, hence we may write $D=Q_1\cup \cdots \cup Q_{2t}$ and $D'=Q_1'\cup \cdots \cup Q_{2s}'$ according to \cref{clm:cycles-far-from-M-and-X_3-decompose}.
Let $i\in [2t]$ and $j\in [2s]$ such that $\dist_G(V(Q_i),V(Q_j'))\leq d$. Let $P$ be a shortest $(V(Q_i), V(Q_j'))$-path in $G$.
Let $u$ be the endpoint of $P$ in $Q_i$ and let $u'$ be the endpoint of $P$ in $Q_j'$.

The following shows that if $E$ is a path in $Q_i\cup P\cup Q_j'$ with $\len(E)\geq 2$ and $V(E)\cap B_G(V(\bigcup\mathscr{C}), r-d)$ equals the set of endpoints of $E$, then $E$ is a problematic admissible ear: Since $Q_i\cup Q_j'\subseteq D\cup D'$ and $V(D\cup D')\cap B_G(M\cup X_3, r+d)=\emptyset$, $V(Q_i\cup P\cup Q_j')\cap B_G(M\cup X_3, r)=\emptyset$. Then
\begin{equation}\label{eq:E-is-far-from-M-and-X_3}
    V(E)\cap B_G(M\cup X_3, r)=\emptyset.
\end{equation}
Let $E'$ be the path obtained from $E$ by deleting its endpoints. Then \eqref{eq:E-is-far-from-M-and-X_3} and \eqref{eq:M-contains-Z-and-L-and-I} imply $V(E')\cap Z=\emptyset$, thus $E'$ is a path in $F_0^-$. Therefore since the endpoints of $E$ lie in $B_G(V(\bigcup\mathscr{C}), r-d)$, $E$ is an ear. Next, since $V(W_E)\subseteq B_G(V(E), r-d)$, \eqref{eq:E-is-far-from-M-and-X_3} implies $V(W_E)\cap B_G(M, d)=\emptyset$, thus $E$ is admissible. Therefore \eqref{eq:E-is-far-from-M-and-X_3} and \eqref{eq:X_3-hits-non-problematic-admissible-ears} imply that $E$ is problematic.

Let $V\coloneqq V(Q_1\cup Q_3\cup \cdots \cup Q_{2t-1})$ and $V'\coloneqq V(Q_1'\cup Q_3'\cup \cdots \cup Q_{2s-1}')$. Observe that since $V(H_D)\cap V(H_{D'})=\emptyset$, $\dist_{G_{\Aux}}(V,V') \geq 2\ceil{d/2}+1>d$.

We now show that $i$ is even, $j$ is even, $u$ is an inner vertex of $Q_i$, and $u'$ is an inner vertex of $Q_j'$. By symmetry it suffices to prove that $i$ is even and $u$ is an inner vertex of $Q_i$. Assume for a contradiction that $i$ is odd or $u$ is an endpoint of $Q_i$. Then $u'$ is an inner vertex of $Q_j'$ and $j$ is even, otherwise $d \geq \len(P) \geq \dist_{G_R}(V, V') \geq \dist_{G_{\Aux}}(V,V') > d$, a contradiction. Note that $u\in B_G(V(\bigcup\mathscr{C}), r-d)$ and $u'\not\in B_G(V(\bigcup\mathscr{C}), r-d)$, thus $u\not=u'$. Let $x$ be an endpoint of $Q_j'$. Let $y\in V(P-u')\cap B_G(V(\bigcup\mathscr{C}), r-d)$ such that the path $E\coloneqq x Q_j' u' \cup u' P y$ has minimum possible length. Such a $y$ exists since $u$ is a candidate and $u\not=u'$. Since $Q_j'$ is an ear, the inner vertices of $Q_j'$ are disjoint from $B_G(V(\bigcup \mathscr{C}), r-d)$, therefore $E\cap B_G(V(\bigcup\mathscr{C}), r-d) = \{x,y\}$. Moreover, $\len(E)\geq 2$ since $E$ has an inner vertex $u'$. Consequently, $E$ is a problematic admissible ear, and $xy$ is an edge of $G_{\Aux}$. Then $xy\cup yPu$ is a $(V',V)$-path in $G_{\Aux}$ of length at most $d$, a contradiction. Hence $i$ is even, $j$ is even, $u$ is an inner vertex of $Q_i$, and $u'$ is an inner vertex of $Q_j'$.

For the final contradiction of the claim, we show that there exists edges $e,e'\in E(G_{\Aux})$, such that $e$ is between an endpoint of $Q_i$ and an inner vertex of $P$, and $e'$ is between an endpoint of $Q_j$ and an inner vertex of $P$. This will imply $d\geq \dist_{G_{\Aux}}(V, V')$, a contradiction. By symmetry, it suffices to prove the existence of $e$. Let $Q$ be a shortest path in $Q_i\cup P\cup Q_j'$ between the set of endpoints of $Q_i$ and the set of endpoints of $Q_j'$. Then the endpoints of $Q$ lie in $B_G(V(\bigcup\mathscr{C}), r-d)$. If $\len(Q)\leq 1$, then $d\geq 1\geq \dist_{G_R}(V, V') \geq \dist_{G_{\Aux}}(V, V')$, a contradiction. Hence $\len(Q)\geq 2$. If $V(Q)\cap B_G(V(\bigcup\mathscr{C}), r-d)$ equals the set of endpoints of $Q$, then $Q$ is a problematic admissible ear and there exists an edge of $G_{\Aux}$ between the endpoints of $Q$, which implies $d\geq 1 \geq \dist_{G_{\Aux}}(V, V')$, a contradiction. Hence it may be assumed that some inner vertex of $Q$ lies in $B_G(V(\bigcup\mathscr{C}), r-d)$. Since $Q\subseteq Q_i\cup P\cup Q_j'$ and $Q_i$ and $Q_j'$ are ears, such a vertex lies in $V(P)$. Let $x$ be the endpoint of $Q$ in $Q_i$. Let $y\in V(P)\cap B_G(V(\bigcup\mathscr{C}), r-d)$ such that the path $E\coloneqq x Q u \cup u P y$ has minimum possible length. Since the inner vertices of $Q_i$ and $Q_j'$ are disjoint from $B_G(V(\bigcup\mathscr{C}), r-d)$, $u$ and $u'$ are not in $B_G(V(\bigcup\mathscr{C}), r-d)$, thus $y$ is an inner vertex of $P$. Furthermore, $E\cap B_G(V(\bigcup\mathscr{C}), r-d) = \{x,y\}$. Also, $\len(E)\geq 2$ since $E$ has an inner vertex $u$. It follows that $E$ is a problematic admissible ear, and $e\coloneqq xy$ is an edge of $G_{\Aux}$ between an endpoint of $Q_i$ and an inner vertex of $P$, as required.
\end{clmproof}

\begin{clm}\label{clm:U_Aux-is-BFS-spanning}
$U$ is a $\bigcup\mathscr{C}$-supported BFS-spanning subgraph of $G_{\Aux}$.
\end{clm}

\begin{clmproof}
Recall that for every edge $e\in \mathcal{E}$, both endpoints of $e$ have distance exactly $r-d$ from $V(\bigcup\mathscr{C})$ in $G$. Hence  $\dist_{G\cup \mathcal{E}}(V(\bigcup\mathscr{C}), v)=\dist_G(V(\bigcup\mathscr{C}), v)$ for all $v\in V(G\cup \mathcal{E})=V(G)$. Consequently, every $\bigcup\mathscr{C}$-supported BFS-spanning subgraph of $G[B_G(V(\bigcup\mathscr{C}), R)]$ is a $\bigcup\mathscr{C}$-supported BFS-spanning subgraph of $(G\cup \mathcal{E})[B_{G\cup \mathcal{E}}(V(\bigcup\mathscr{C}), R)]$. Therefore since $G[B_G(V(\bigcup\mathscr{C}), R)] = G_R$ and $(G\cup \mathcal{E})[B_{G\cup \mathcal{E}}(V(\bigcup\mathscr{C}), R)] = G_R\cup \mathcal{E} = G_{\Aux}$, $U$ is a $\bigcup\mathscr{C}$-supported BFS-spanning subgraph of $G_{\Aux}$.
\end{clmproof}

\begin{clm}\label{clm:main-edges-of-H_D-have-big-span}
For every $D\in \mathcal{D}$, $H_D$ is a non-null connected subgraph of $G_{\Aux}$ such that every edge $e\in E(H_D)$ satisfies $\spn(G_{\Aux}, \bigcup\mathscr{C}, U, e)\leq \ell$.
\end{clm}

\begin{clmproof}
Consider any $D\in \mathcal{D}$ and write $D=Q_1\cup \cdots \cup Q_{2t}$ as per \cref{clm:cycles-far-from-M-and-X_3-decompose}. First note that $\emptyset \not= V(Q_1\cup Q_3\cup \cdots \cup Q_{2t-1}) = V(D)\cap B_G(V(\bigcup\mathscr{C}), r-d)\subseteq V(G_R) = V(G_{\Aux})$. Now since $\{Q_2, Q_4, \dots, Q_{2t}\}$ is a collection of problematic admissible ears, for each $i\in [t]$ there exists an edge in $G_{\Aux}$ between the endpoints of $Q_{2i}$. Consequently, $V(Q_1\cup Q_3\cup \cdots \cup Q_{2t-1})$ is a non-empty connected set of vertices in $G_{\Aux}$. Then by \eqref{eq:H_D}, $H_D$ is a non-null connected subgraph of $G_{\Aux}$. We now show that $\spn(G_{\Aux}, \bigcup\mathscr{C}, U, e)\leq \ell$ for every edge $e\in E(H_D)$. Consider any edge $e\in E(H_D)$. Since $H_D\subseteq G_{\Aux} = G_R\cup \mathcal{E}$, either \textbf{(1)} $e\in \mathcal{E}$ or \textbf{(2)} $e\in E(G_R)$.

\textbf{Case (1)} $e\in \mathcal{E}$: Then for some $i\in [p]$ there exists an $i$-problematic admissible ear $E$ that has the same endpoints as $e$. Then $E$ is an $(i,i)$-ear and there exist a cycle in $W_E\cup C_i$ of length less than $\ell$. Therefore since $\len(C_i)\geq \ell$, the endpoints of the walk $W_E$ have distance less than $\ell$ in $C_i$, which implies $\spn(G_{\Aux}, \bigcup\mathscr{C}, U, e)<\ell$ as required.

\textbf{Case (2)} $e\in E(G_R)$: Let $V\coloneqq V(D)\cap B_G(V(\bigcup\mathscr{C}), r-d)$. Recall that $W\subseteq M$ by \eqref{eq:W-and-intersections-in-X_1-ball} and \eqref{eq:def-of-M}. We show that an endpoint of $e$ is not in $W$, which implies that $\spn(G_{\Aux}, \bigcup\mathscr{C}, U, e) = \spn(G_R, \bigcup\mathscr{C}, U, e)\leq \ell$ as required. Recall that $e\in E(H_D)$. Let $P$ be a shortest path in $G_{\Aux}$ between $V$ and the set of endpoints of $e$. Then $\len(P)\leq \ceil{d/2}$ and $e\not\in E(P)$. Let $v$ be the endpoint of $e$ in $P$. If $P\subseteq G_R$, then $B_G(M, r+d)\cap V(D)=\emptyset$ implies that $v$ is not in $B_G(M, r+d-\len(P))$. Therefore $v\not\in W$ as required. On the other hand $P\not\subseteq G_R$, so $P$ contains an edge from $\mathcal{E}$. Let $xy\in E(P)\cap \mathcal{E}$ such that $\dist_P(v, \set{x,y})$ is minimum. Then $\dist_G(v, \set{x,y}) \leq \ceil{d/2}$. Let $E$ be a problematic admissible ear whose set of endpoints is $\set{x,y}$. Since $E$ is admissible, $V(W_E)\cap B_G(M, d)=\emptyset$, thus $\set{x,y}\subseteq V(W_E)$ implies that $\set{x,y}\cap B_G(W, d)=\emptyset$. Therefore since $\dist_G(v, \set{x,y})\leq \ceil{d/2}$, $v\not\in W$ as required. 
\end{clmproof}

Now $\mathscr{C}$ is a non-empty collection of pairwise vertex-disjoint cycles in $G\cup\mathcal{E}$ and, by \cref{clm:U_Aux-is-BFS-spanning}, $U$ is a $\bigcup\mathscr{C}$-supported BFS-spanning subgraph of $G_{\Aux}$. 
Note that the earlier definition of $G_{\Aux}$ is equivalent to  $(G\cup \mathcal{E})[B_{G\cup \mathcal{E}}(V(\bigcup\mathscr{C}), R)]$.
Furthermore by \cref{clm:main-edges-of-H_D-have-big-span}, $\mathdefin{\mathcal{H}}\coloneqq \set{H_D:D\in \mathcal{D}}$ is a collection of non-null connected subgraphs of $G_{\Aux}$ such that $\spn(G_{\Aux}, \bigcup\mathscr{C}, U, e)\leq \ell$ for every $e\in E(\bigcup\mathcal{H})$. Hence we may proceed by cases depending on the outcome of \cref{lem:cycle-helly-for-small-span-collection}.

\textbf{Case \ref{item:packing:cycle-helly-for-small-span-collection}} $\mathcal{H}$ contains $k$ pairwise vertex-disjoint members: Then there exists $\mathcal{D}'\subseteq \mathcal{D}$ with $|\mathcal{D}'|=k$ such that $V(H_A)\cap V(H_B)=\emptyset$ for all distinct $A,B\in \mathcal{D}'$. By \cref{clm:disjoint-H's-implies-far-away-D's}, $\mathcal{D}'$ is a $d$-packing of $k$ cycles in $G$, and by definition of $\mathcal{D}$, each of these cycles has length at least $\ell$. Hence $G$ contains a $d$-packing of $k$ cycles each of length at least $\ell$, a contradiction.

\textbf{Case \ref{item:alternative:cycle-helly-for-small-span-collection}} there exists $\mathdefin{Y}\subseteq V(\bigcup\mathscr{C})$ with $|Y|\leq k-1+|\mathscr{C}|$ such that for $\mathdefin{X_4'}\coloneqq B_{\bigcup\mathscr{C}}(Y,\floor{\ell/2})$, $B_U(X_4', R)\cap V(H)\not=\emptyset$ for all $H\in \mathcal{H}$.
Since $|\mathscr{C}|\leq p\leq k-1$, $|Y|\leq 2(k-1)$. Let $\mathdefin{X_4}\coloneqq \pad(\bigcup\mathscr{C}, Y, 2\ceil{d/2}+1, \ceil{\ell/2})$. Then by \cref{cor:def-of-pad}, 
\begin{equation}\label{eq:size-of-X_4}
    |X_4|\leq |Y|(2\ceil{\ell/2}+1) \leq 2(k-1)(2\ceil{\ell/2}+1).
\end{equation}
Let $\mathdefin{X}\coloneqq X_0\cup X_1 \cup X_2 \cup X_3 \cup X_4$. Then (\ref{eq:size-of-X_0}, \ref{eq:size-of-X_1}, \ref{eq:size-of-X_2}, \ref{eq:size-of-X_3}, \ref{eq:size-of-X_4}) imply
\begin{align}
    |X| &\leq |X_0| + |X_1| + |X_2| + |X_3| + |X_4|\notag\\
    &\leq k-1 + (s(k)-1)(2\ceil{\ell/2}+1) + 2(k-1)(2\ceil{\ell/2}+1)\notag\\
    &\qquad\qquad+ 50(s(k)+2(k-1)-1)(2\ceil{\ell/2}+1) + 2(k-1)(2\ceil{\ell/2}+1)\notag\\
    &= (51s(k)+104k-155)(2\ceil{\ell/2}+1)+k-1\notag\\
    &=f(k,\ell).\label{eq:f(k)}
\end{align}

By \eqref{eq:def-of-M},
\begin{align}
    B_G(M\cup X_3, r+d)\cup B_G(X_4, r-\floor{d/2}) &\subseteq B_G(X, R+2r+2d)\qquad\qquad\notag\\
    &\subseteq B_G(X, 21d)\notag\\
    &= B_G(X, g(d)).\label{eq:g(d)}
\end{align}

\begin{clm}\label{clm:main-hitting}
$G-B_G(X, g(d))$ has no cycle of length at least $\ell$.
\end{clm}

\begin{clmproof}
Consider any cycle $D$ in $G$ of length at least $\ell$. We show that $B_G(X, g(d)) \cap V(D)\not=\emptyset$. If $B_G(M\cup X_3, r+d)\cap V(D)\not=\emptyset$, then \eqref{eq:g(d)} implies $B_G(X, g(d)) \cap V(D)\not=\emptyset$ as required. Hence it may be assumed that $B_G(M\cup X_3, r+d)\cap V(D)=\emptyset$, in other words, $D\in \mathcal{D}$. Let $u \in B_U(X_4', R)\cap V(H_D)$ as promised by the case \ref{item:alternative:cycle-helly-for-small-span-collection} above. Since $X_4'\subseteq V(\bigcup\mathscr{C})$, there exists $x\in X_4'$ such that $x=\proj_U(u)$. Since $u\in V(H_D) = B_{G_{\Aux}}(V(D)\cap B_G(V(\bigcup\mathscr{C}), r-d), \ceil{d/2})$, there exists $v\in V(D)\cap B_G(V(\bigcup\mathscr{C}), r-d)$
and a path $q_0 q_1\cdots q_s$ in $H_D$ such that $q_0=v$, $q_s=u$, and $s\leq \ceil{d/2}$. Then by \cref{clm:main-edges-of-H_D-have-big-span},
\begin{align*}
    \dist_{\bigcup\mathscr{C}}(\proj_U(v), x) &\leq \sum_{i=1}^{s}\dist_{\bigcup\mathscr{C}}(\proj_U(q_{i-1}), \proj_U(q_i))\\
    &= \sum_{i=1}^{s}\textstyle \spn(G_{\Aux}, \bigcup\mathscr{C}, U, q_{i-1}q_i) \leq s\ell \leq \ceil{d/2}\ell.
\end{align*}
Furthermore, since $x\in X_4'=B_{\bigcup\mathscr{C}}(Y, \floor{\ell/2})$,
\[\dist_{\bigcup\mathscr{C}}(\proj_U(v), Y) \leq \dist_{\bigcup\mathscr{C}}(\proj_U(v), x) + \dist_{\bigcup\mathscr{C}}(x, Y) \leq \ceil{d/2}\ell + \floor{\ell/2} \leq (2\ceil{d/2}+1)\ceil{\ell/2}.\]
Since $X_4=\pad(\bigcup\mathscr{C}, Y, 2\ceil{d/2}+1, \ceil{\ell/2})$, \cref{cor:def-of-pad} implies $\dist_{\bigcup\mathscr{C}}(\proj_U(v), X_4)\leq \ceil{d/2}$. Therefore since $v\in B_G(V(\bigcup\mathscr{C}), r-d)$, $v\in B_U(\proj_U(v), r-d)$, thus $v\in B_{\bigcup\mathscr{C}\cup U}(X_4, r-\floor{d/2})$. Recall that $v\in V(D)$, thus we have shown that $D$ has a vertex in $B_G(X_4, r-\floor{d/2})$. Then by \eqref{eq:g(d)}, $B_G(X, g(d))\cap V(D)\not=\emptyset$, as desired.
\end{clmproof}

\cref{clm:main-hitting} and \eqref{eq:f(k)} imply that the theorem holds.
\end{proof}

\section*{Acknowledgments}
Part of this work was carried out during the following workshops:  The Graph Theory Workshop held in January 2025 in Oberwolfach (Germany);  the Twelve Annual Workshop on Geometry and Graphs held in February 2025 at the Bellairs Research Institute of McGill University (Barbados);  the second Belgian Graph Theory Conference held in July 2025 in Brussels (Belgium);  the Thirteenth Annual Workshop on Geometry and Graphs held in February 2026 at the Bellairs Research Institute of McGill University (Barbados);   the Focused Workshop on Erd\H{o}s--P\'osa problems held in March 2026 in Będlewo (Poland); and the MATRIX workshop ``Global Structure and Geometry of Graphs'' held in April 2026 in Creswick (Australia). The authors are thankful to all the organizers and participants for providing a stimulating research environment. The fourth author would also like to thank David Wood for his feedback on the presentation of the paper.

\bibliographystyle{plainurlnat}

\setlength{\bibsep}{0.1ex}

\bibliography{bibliography}

\end{document}